\DeclareRobustCommand{\VAN}[3]{#2} 
\theoremstyle{plain}
\newtheorem{thm}{Theorem}[section]
\newtheorem{lemma}[thm]{Lemma}
\newtheorem{cor}[thm]{Corollary}
\newtheorem{prop}[thm]{Proposition}
\newtheorem*{thmno}{Theorem}
\newtheorem*{corno}{Corollary}
\theoremstyle{definition}
\newtheorem{defn}[thm]{Definition}
\newtheorem{rmk}[thm]{Remark}
\newtheorem{example}[thm]{Example}
\newtheorem*{abusenotation}{Abuse of notation}
\renewcommand{\Re}{\operatorname{Re}}
\renewcommand{\Im}{\operatorname{Im}}
\newcommand{\CC}{\mathbb{C}}
\newcommand{\PP}{\mathbb{P}}
\newcommand{\QQ}{\mathbb{Q}}
\newcommand{\RR}{\mathbb{R}}
\newcommand{\ZZ}{\mathbb{Z}}
\newcommand{\cC}{\mathcal{C}}
\newcommand{\cE}{\mathcal{E}}
\newcommand{\cF}{\mathcal{F}}
\newcommand{\cH}{\mathcal{H}}
\newcommand{\cJ}{\mathcal{J}}
\newcommand{\cL}{\mathcal{L}}
\newcommand{\cM}{\mathcal{M}}
\newcommand{\cO}{\mathcal{O}}
\newcommand{\cS}{\mathcal{S}}
\newcommand{\cU}{\mathcal{U}}
\newcommand{\cV}{\mathcal{V}}
\newcommand{\cX}{\mathcal{X}}
\newcommand{\cY}{\mathcal{Y}}
\newcommand{\fM}{\mathfrak{M}}
\newcommand{\Alb}{\operatorname{Alb}}
\newcommand{\Aut}{\operatorname{Aut}}
\newcommand{\Bir}{\operatorname{Bir}}
\newcommand{\ch}{\operatorname{ch}}
\newcommand{\CH}{\operatorname{CH}}
\newcommand{\D}{\operatorname{D}}
\newcommand{\id}{\operatorname{id}}
\newcommand{\oH}{\operatorname{H}}
\newcommand{\Hom}{\operatorname{Hom}}
\newcommand{\Kum}{\operatorname{Kum}}
\newcommand{\Mon}{\operatorname{Mon}}
\newcommand{\Or}{\operatorname{O}}
\newcommand{\Ort}{\operatorname{\widetilde{O}}}
\newcommand{\Pic}{\operatorname{Pic}}
\newcommand{\Sym}{\operatorname{Sym}}
\newcommand{\td}{\operatorname{td}}
\date{}
\title[Monodromy of IHS manifolds of OG10 type]{On the monodromy group of desingularised moduli spaces of sheaves on K3 surfaces}
\author{Claudio Onorati}
\address{Department of Mathematics, University of Oslo, PO Box 1053, Blindern, 0316 Oslo, Norway -- claudion@math.uio.no}
\curraddr{Dipartimento di Matematica, Universit\`a degli studi di Roma Tor Vergata, via della Ricerca Scientifica 1, 00133 Roma, Italy}
\email{onorati@mat.uniroma2.it}
\begin{document}

\maketitle

\begin{abstract}
In this paper we prove a conjecture of Markman about the shape of the monodromy group of irreducible holomorphic symplectic manifolds of OG10 type. As a corollary, we also compute the locally trivial monodromy group of the underlying singular symplectic variety.
\end{abstract}

\tableofcontents

\section*{Introduction}
An irreducible holomorphic symplectic manifold is a compact K\"ahler manifold that is
simply connected with a unique closed non-degenerate holomorphic $2$-form. They are fundamental factors in the Beauville--Bogomolov
decomposition of compact K\"ahler manifolds with trivial canonical bundle. The first
example is in dimension $2$, where they are all $K3$ surfaces. It is very difficult to construct 
examples in higher dimension, and so far only four deformation types are known: two
families in any even dimension, namely the $K3^{[n]}$ and $\Kum^n$ types;
and two sporadic families in dimension $6$ and $10$, namely the OG6 and OG10 types.
The latter is the main object of this paper.

Like $K3$ surfaces, irreducible holomorphic symplectic manifolds are studied via their second 
integral cohomology group. More precisely, Beauville, Bogomolov and Fujiki independently noticed
that the group $\oH^2(X,\ZZ)$, where $X$ is an irreducible holomorphic
symplectic manifold, has a natural non-degenerate symmetric bilinear form, which generalises the
intersection product on a $K3$ surface (e.g.\ \cite{Beauville:c1=0}). With this bilinear form,
$\oH^2(X,\ZZ)$ becomes a lattice of signature $(3,b_2(X)-3)$. 
In all the examples $\oH^2(X,\ZZ)$ is an even lattice, but this is not known to hold in general.
Moreover, it is unimodular when $X$ is a $K3$ surface, but in other cases this is not necessarily true: in particular it is not true in all the known higher dimensional examples.
It is natural to study and understand the group $\Or(\oH^2(X,\ZZ))$ of isometries of the lattice, in particular those isometries which are geometrically meaningful (in a sense to be made precise in Definition~\ref{defn:parallel transport operators}). The subgroup of these geometrically meaningful isometries is denoted $\Mon^2(X)$, and its elements are called \emph{monodromy} operators: among the elements in $\Mon^2(X)$ there are for examples isometries of the form $f^*$, where $f\in\Aut(X)$ is an automorphism and, more generally, $f\in\Bir(X)$ is a birational automorphism (since the canonical bundle of an irreducible holomorphic symplectic manifold is nef, the pullback of a birational transformation is an isomorphism of $\oH^2(X,\ZZ)$ into itself, which preserves the lattice structure - see \cite{Huybrechts:BasicResults}). All isometries in $\Mon^2(X)$ preserve a natural orientation of the lattice (see Remark~\ref{rmk:orient pub}), so they lie in the subgroup $\Or^+(\oH^2(X,\ZZ))$ of orientation preserving isometries. There are isometries that do not preserve the orientation, like $-\id\in\Or(\oH^2(X,\ZZ))$, and that therefore are not monodromy operator. This can be explained from a moduli theory point of view as follows: if $\fM$ denotes the moduli space of marked irreducible holomorphic symplectic manifolds of fixed dimension and deformation type, then the pairs $(X,\eta)$ and $(X,-\eta)$ always belong to different connected components. As remarked before, birational automorphisms induces monodromy operators. On the other hand, any monodromy operator preserving the Hodge structure comes from a birational automorphism, up to some exceptional reflections associated with divisorial contractions (\cite[Theorem~6.18]{Markman:Survey}). We study some exceptional reflections in the context of manifolds of type OG10 in Section~\ref{section:exceptional reflections}.

The knowledge of the monodromy group is of paramount importance to study any aspect of
the geometry of irreducible holomorphic symplectic manifolds. It has been explicitly computed by 
Markman for manifolds of $K3^{[n]}$ type (see \cite{Markman:Monodromy1}), and
by Markman and Mongardi for manifolds of $\Kum^n$ type 
(see \cite{Markman:MonodromyKum} and \cite{Mongardi:Monodromy}). In both cases its
exact shape depends on the dimension, but the general feature is that 
$\Mon^2(K3^{[n]})=\Or^+(\oH^2(K3^{[n]},\ZZ))$ (when $n-1$ is a prime power),
while $\Mon^2(\Kum^n)\subset\Or^+(\oH^2(\Kum^2,\ZZ))$ has index $2$
(when $n+1$ is a prime power). The last fact is deeply related to the geometry: it
reflects the fact that there exist two families of manifolds of $\Kum^n$ type that are
generically non-birational, but Hodge isometric. 
This phenomenon was noticed by Namikawa in \cite{Namikawa:Counterexample} as a
counter-example to the birational Torelli theorem.
Finally, the monodromy group of manifolds of OG6 type has been recently computed by Mongardi and Rapagnetta (\cite{MongardiRapagnetta:MonodromyOG6}), who showed that it is maximal, i.e.\ $\Mon^2(\operatorname{OG6})=\Or^+(\oH^2(\operatorname{OG6},\ZZ))$.

In this paper, we address the remaining question of determining the monodromy group of manifolds of OG10 type. It was conjectured by Markman that 
$\Mon^2(\operatorname{OG10})=\Or^+(\oH^2(\operatorname{OG10},\ZZ))$
(see \cite[Conjecture~10.7]{Markman:Survey}). In \cite[Theorem~3.3]{Mongardi:Monodromy} Mongardi constructs an orientation preserving isometry that is not of monodromy type: unfortunately the construction is based on the work \cite{WandelMongardi:InducedAutomorphisms} that contains a mistake (see~\cite{Mongardi:MonodromyErratum}). 

Our main result is the following affirmative answer to Markman's conjecture.
\begin{thmno}[Theorem~\ref{thm:Mon^2}]
Let $X$ be an irreducible holomorphic symplectic manifold of OG10 type. Then
$\Mon^2(X)=\Or^+(\oH^2(X,\ZZ))$.
\end{thmno}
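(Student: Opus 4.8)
The plan is to exhibit $\Or^+(\oH^2(X,\ZZ))$ as generated by two kinds of explicit monodromy operators and then to close the argument with a purely lattice-theoretic generation statement. I would begin by fixing the identification $\oH^2(X,\ZZ)\cong\Lambda:=U^{\oplus 3}\oplus E_8(-1)^{\oplus 2}\oplus A_2(-1)$ coming from the presentation of $X$ as a symplectic resolution $\widetilde M_v(S,H)\to M_v(S,H)$ with $v=2v_0$ and $v_0^2=2$ (Perego--Rapagnetta). Inside $\Lambda$ I single out the primitive sublattice $v_0^\perp\subset\widetilde\Lambda=\oH^*(S,\ZZ)$ and the class $\rho$ of the exceptional divisor, which is primitive of square $\rho^2=-6$ and divisibility $3$, so that $\rho/3$ generates the discriminant group $A_\Lambda\cong\ZZ/3\ZZ$. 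An elementary first step describes the target: since $\Or(A_\Lambda)=\{\pm\id\}\cong\ZZ/2\ZZ$, the stable orthogonal group $\Ort^+(\Lambda)$ has index $2$ in $\Or^+(\Lambda)$; moreover $-\id\notin\Or^+(\Lambda)$, as it reverses the orientation of the positive-definite $3$-space, so the nontrivial coset cannot be reached through $\pm\id$ and must be produced geometrically.

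The orientation-reversal on the discriminant is supplied by the reflection $R_\rho$. A one-line computation gives $R_\rho(\rho/3)=-\rho/3$, so $R_\rho$ acts as $-\id$ on $A_\Lambda$ while fixing the positive-definite part (reflections in negative classes are orientation preserving), whence $R_\rho\in\Or^+(\Lambda)\setminus\Ort^+(\Lambda)$. Geometrically $\rho$ is the class of the prime exceptional divisor contracted by $\widetilde M_v(S,H)\to M_v(S,H)$, so by Markman's theorem on prime exceptional reflections (\cite{Markman:PrimeExceptional}, and Section~\ref{section:exceptional reflections}) the reflection $R_\rho$ is a monodromy operator. This already provides one generator outside $\Ort^+(\Lambda)$, and it then remains to prove the inclusion $\Ort^+(\Lambda)\subseteq\Mon^2(X)$.

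For the stable part the strategy is to transport monodromy from the $K3$ side. Deformations of the pair $(S,v_0)$ together with the derived autoequivalences of $\D^b(S)$ yield a homomorphism from the stabiliser of $v_0$ in $\Or^+(\widetilde\Lambda)$ to $\Mon^2(X)$; the operators so obtained act trivially on $A_\Lambda$, since they preserve the gluing between $v_0^\perp$ and $\ZZ\rho$ defining the overlattice $\Lambda$, and hence land in $\Ort^+(\Lambda)$, but they all fix $\rho$. To move $\rho$ and thereby sweep out the whole of $\Ort^+(\Lambda)$ I would combine these induced operators with reflections $R_\delta$ in primitive $(-2)$-classes $\delta$ of divisibility $1$: each such $\delta$ can be arranged to be the class of a prime exceptional divisor on a suitable deformation $X'$ of $X$ (a birational model, respectively wall-crossing, of an O'Grady moduli space), whence $R_\delta\in\Mon^2(X')$, and parallel transport along the deformation identifies this with an element of $\Mon^2(X)$. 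An Eichler-type generation result for $\Lambda$, which contains $U^{\oplus 3}$ and therefore an ample supply of isotropic vectors, then shows that the induced operators together with these $(-2)$-reflections generate all of $\Ort^+(\Lambda)$.

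Assembling the pieces, $\Mon^2(X)\supseteq\langle\,\Ort^+(\Lambda),\,R_\rho\,\rangle=\Or^+(\Lambda)$, and since the reverse inclusion $\Mon^2(X)\subseteq\Or^+(\Lambda)$ always holds (Remark~\ref{rmk:orient pub}), equality follows. I expect the genuine difficulty to be concentrated in the inclusion $\Ort^+(\Lambda)\subseteq\Mon^2(X)$: the operators induced from the $K3$ surface naturally fix the exceptional class, so the heart of the matter is to realise enough $(-2)$-reflections as honest monodromy operators by deforming $X$ so that the chosen class becomes prime exceptional, and then to control the interplay of these reflections, the induced operators, and the discriminant action via Eichler's criterion.
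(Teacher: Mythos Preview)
Your outline has the right architecture---produce explicit monodromy operators geometrically and then close with an Eichler-type generation argument---and this is indeed how the paper proceeds. But there is a genuine gap at the heart of your proposal: you are missing the source of monodromy that does the real work in the paper, namely the Laza--Sacc\`a--Voisin family of compactified intermediate Jacobian fibrations of cubic fourfolds.

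Concretely, all the operators you propose (those induced from deformations of $(S,v_0)$, derived autoequivalences, and the reflection $R_\rho$ in the exceptional class) stabilise the class $\rho=[\widetilde\Sigma]$. The operators you obtain in this way land, at best, in the stabiliser $\Or^+(\Lambda)_\rho$, which is a proper subgroup of $\Or^+(\Lambda)$. Your plan to escape this stabiliser is to realise ``enough'' reflections $R_\delta$ in $(-2)$-classes of divisibility $1$ as prime exceptional reflections on suitable deformations. But the assertion that an \emph{arbitrary} such $\delta$ can be made prime exceptional on some deformation is precisely what is not available a priori: the characterisation of stably prime exceptional classes in the OG10 case is a consequence of knowing $\Mon^2$, not an input to its computation, so this step is circular. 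The paper avoids this by constructing \emph{specific} $(-2)$-reflections geometrically (Example~\ref{example:R_L}, via divisors on the singular moduli space with square $-2$), but these alone, together with the $K3$-induced operators and Markman's two reflections $R_{\widetilde B},R_{\widetilde\Sigma}$, are still not shown to generate $\Or^+(\Lambda)$.

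What the paper actually does is to bring in a second, genuinely different family of OG10 manifolds: the intermediate Jacobian fibrations $\cJ_V\to\PP^5$ of smooth cubic fourfolds. The monodromy of cubic fourfolds, transported via an explicit parallel transport operator from $\cJ_V$ to $\widetilde M_S$ (Section~\ref{section:bridge}), produces a large subgroup $G_3\subset\Mon^2(\widetilde M_S)$ consisting of all $g\in\Ort^+$ fixing a specific hyperbolic plane $\overline U=\langle e,f\rangle$ \emph{not} orthogonal to $\widetilde\Sigma$ (indeed $\widetilde\Sigma=3f-A$ with $A\in\overline U^\perp$). These operators move $\widetilde\Sigma$. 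With $\overline U$ in hand, the Eichler decomposition $\Or^+(\Lambda)=\langle E_{\overline U}(L_1),\Or^+(L_1)\rangle$ becomes effective: one has $\Or^+(L_1)=\langle G_3,R_A\rangle$, and the transvections are reduced, via the identity $t(-f,k)=R_kR_{k+f}$, to a single explicit reflection $R_l$ constructed in Example~\ref{example:R_L}. The LSV family is therefore not an optional ingredient but the mechanism that supplies operators outside the stabiliser of the exceptional class; without it, or a substitute of comparable strength, your argument does not close.
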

We give an explicit description of $\Mon^2(X)$ in terms of generators when $X=\widetilde{M}_S$ is the O'Grady
moduli space, namely the symplectic desingularisation of the moduli space $M_S$ of rank 
$2$ sheaves on a projective $K3$ surface $S$, with trivial determinant and
second Chern class of degree $4$ (see Example~\ref{example:O'Grady moduli space}).

As a straightforward corollary of this result (see \cite[Theorem~1.3]{Markman:Survey}), we get a strong version of the global Torelli theorem.
\begin{corno}[Global Torelli Theorem]
Let $X$ and $Y$ be two irreducible holomorphic symplectic manifolds of OG10 type. Then $X$ and $Y$ are bimeromorphic if and only if they are Hodge isometric.
\end{corno}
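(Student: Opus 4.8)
The plan is to deduce the statement from the Hodge-theoretic global Torelli theorem of Markman (\cite[Theorem~1.3]{Markman:Survey}): two deformation equivalent manifolds $X$ and $Y$ are bimeromorphic if and only if there is a parallel transport operator $f\colon\oH^2(X,\ZZ)\to\oH^2(Y,\ZZ)$ (Definition~\ref{defn:parallel transport operators}) that is also an isomorphism of Hodge structures. Granting this, the forward implication needs nothing about the monodromy group: if $X$ and $Y$ are bimeromorphic, the result of Huybrechts (\cite{Huybrechts:BasicResults}) quoted in the introduction produces a pullback isometry $\oH^2(X,\ZZ)\to\oH^2(Y,\ZZ)$ preserving the Hodge structures, so that $X$ and $Y$ are Hodge isometric.

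The content of the corollary is thus the converse, and this is where Theorem~\ref{thm:Mon^2} enters. Starting from an arbitrary Hodge isometry $f\colon\oH^2(X,\ZZ)\to\oH^2(Y,\ZZ)$, I would first recall that, since $X$ and $Y$ are deformation equivalent, the set of parallel transport operators between them is non-empty; fixing one such $g$, every parallel transport operator is of the form $g\circ\varphi$ with $\varphi\in\Mon^2(X)$. By Theorem~\ref{thm:Mon^2} we have $\Mon^2(X)=\Or^+(\oH^2(X,\ZZ))$, and since parallel transport operators preserve the orientation (Remark~\ref{rmk:orient pub}), this identifies the parallel transport operators $\oH^2(X,\ZZ)\to\oH^2(Y,\ZZ)$ with the orientation preserving isometries in the orientation class of $g$. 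Consequently, as soon as $f$ is orientation preserving, the isometry $g^{-1}\circ f$ is orientation preserving as well, hence lies in $\Or^+(\oH^2(X,\ZZ))=\Mon^2(X)$, and therefore $f=g\circ(g^{-1}\circ f)$ is itself a parallel transport operator.

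It remains to ensure that $f$ can be taken orientation preserving. If it already is, there is nothing to do; otherwise I would replace $f$ by $(-\id)\circ f$. The key observation is that $-\id$ is simultaneously a Hodge isometry and orientation reversing: it preserves the Hodge decomposition because it sends the period to its negative and hence fixes $\oH^{2,0}(Y)$, while it acts as $-\id$ on the positive definite three-space of $\oH^2(Y,\RR)$ and so reverses the orientation (this is exactly the reason $-\id\notin\Mon^2(X)$ recalled in the introduction). Thus $(-\id)\circ f$ is again a Hodge isometry, now orientation preserving, and by the previous paragraph it is a parallel transport operator; Markman's theorem then gives that $X$ and $Y$ are bimeromorphic.

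Every step after Theorem~\ref{thm:Mon^2} is formal, so I do not expect a genuine obstacle in the corollary itself: the whole difficulty is already absorbed into the main theorem. What the equality $\Mon^2(X)=\Or^+(\oH^2(X,\ZZ))$ buys us is precisely the closing of the a priori gap between being Hodge isometric and admitting a parallel transport Hodge isometry. That this gap is not automatic is shown by the $\Kum^n$ case discussed in the introduction, where $\Mon^2$ has index two in $\Or^+$ and the bimeromorphic Torelli theorem indeed fails; the maximality of the monodromy group in OG10 type is exactly what makes the two notions coincide.
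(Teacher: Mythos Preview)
Your proposal is correct and follows exactly the approach the paper indicates: the paper does not spell out a proof at all, but simply states the corollary as ``a straightforward corollary'' of Theorem~\ref{thm:Mon^2} together with \cite[Theorem~1.3]{Markman:Survey}, and your write-up is a faithful unpacking of that deduction, including the standard trick of replacing a Hodge isometry $f$ by $(-\id)\circ f$ to force orientation preservation.
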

Here we say that $X$ and $Y$ are Hodge isometric if there exists an isometry between $\oH^2(X,\ZZ)$ and $\oH^2(Y,\ZZ)$ that respects the Hodge decomposition.

Let us outline the proof of Theorem~\ref{thm:Mon^2}.
The first step consists in producing monodromy operators. Partial results in this direction were
obtained by Markman in \cite{Markman:ModularGaloisCovers}, where he proved that the group
generated by two particular exceptional reflections is contained in the monodromy group. 
He worked in the family of O'Grady moduli spaces. Working in the same family, we study how the monodromy
group of the underlying $K3$ surface lifts to the monodromy group of the O'Grady moduli space
(see Theorem~\ref{thm:my operators O'Grady}). This result was expected, but,
to the best of the author's knowledge, there is no proof in the literature.

Using non-trivial results in 
birational geometry, like for example the termination of the minimal model program for irreducible 
holomorphic symplectic manifolds (\cite{LehnPacienza:MMP}), and the birational geometry 
of singular moduli spaces of sheaves on $K3$ surfaces 
(\cite{MeachanZhang:BirationalGeometry}), we study in Section~\ref{section:exceptional reflections} monodromy operators arising as exceptional reflections around divisors that are pullback of prime divisors of square $-2$ on the underlying singular moduli space.

More monodromy operators are constructed using the family of compactified intermediate Jacobian fibrations constructed by Laza, Sacc\`a and Voisin in \cite{LSV:O'Gr10}. 
If $V$ is a generic cubic fourfold, the LSV compactification of the fibration whose fibres are 
intermediate Jacobians of smooth linear sections of $V$ is an irreducible holomorphic symplectic
manifold of OG10 type. Working in this family, we study how the monodromy of the
cubic fourfold lifts to the monodromy of the OG10 manifold. 
An explicit parallel transport operator between this family and the family of O'Grady moduli spaces 
is constructed in Section~\ref{section:bridge} (see Theorem~\ref{thm:LSV to OG} for the
final statement).

If we denote by $G\subset\Mon^2(\widetilde{M})$ the subgroup generated by all these monodromy operators, in Section~\ref{section:Mon^2} we use lattice-theoretic results to prove that $G=\Or^+(\oH^2(\widetilde{M},\ZZ))$, thus completing the proof.

It follows from this argument that the monodromy group is generated by monodromy operators coming from projective families: this is a highly non-trivial feature. Even though the same statement is true for all the other known examples of irreducible holomorphic symplectic manifolds, it is not clear why this should hold in general.
\medskip

Finally, using recent developments in the theory of singular symplectic varieties, especially the work of Bakker and Lehn \cite{BakkerLehn:Singular2016}, we study the locally trivial monodromy group of the singular O'Grady moduli space $M_S$ (see Example~\ref{example:O'Grady moduli space}). 
\begin{thmno}[Theorem~\ref{thm:l t monodromy}]
Let $Y$ be a singular symplectic variety locally trivial deformation equivalent to $M_S$. Then $\Mon^2_{\operatorname{lt}}(Y)=\Ort^+(\oH^2(Y,\ZZ))$.
\end{thmno}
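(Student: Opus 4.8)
The plan is to deduce the statement from the computation of the smooth monodromy group in Theorem~\ref{thm:Mon^2}, transferring information across the symplectic resolution $\pi\colon\widetilde{Y}\to Y$. First I would fix the lattice-theoretic picture. If $e\in\oH^2(\widetilde{Y},\ZZ)$ denotes the class of the exceptional divisor (so that $q(e)=-6$ with divisibility $3$, cf.\ \cite{Markman:Survey}), then $\pi^*$ identifies $\oH^2(Y,\ZZ)$ with the primitive sublattice $e^\perp\subset\oH^2(\widetilde{Y},\ZZ)$. This sublattice is isometric to $\langle-2\rangle\oplus U^3\oplus E_8(-1)^2$, so its discriminant group is $\ZZ/2\ZZ$; since every isometry acts trivially on it, we have $\Ort^+(\oH^2(Y,\ZZ))=\Or^+(\oH^2(Y,\ZZ))$, and this is the group to be matched.

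The core of the argument is a dictionary between locally trivial deformations of $Y$ and deformations of $\widetilde{Y}$, furnished by Bakker--Lehn's theory \cite{BakkerLehn:Singular2016}. A locally trivial family $\cY\to B$ of deformations of $Y$ admits a simultaneous symplectic resolution $\widetilde{\cY}\to B$ deforming $\pi$ fibrewise, along which $e$ stays of type $(1,1)$ and the exceptional divisor stays contractible; conversely a family of $\widetilde{Y}$ preserving these properties contracts fibrewise to a locally trivial family of $Y$. I would use this to set up a restriction homomorphism $\operatorname{Stab}_{\Mon^2(\widetilde{Y})}(e)\to\Mon^2_{\operatorname{lt}}(Y)$, compatible with $\oH^2(Y,\ZZ)=e^\perp$: a locally trivial parallel transport operator lifts to a parallel transport operator of $\widetilde{Y}$ fixing $e$, and a monodromy operator of $\widetilde{Y}$ fixing $e$ descends.

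Granting the dictionary, the inclusion $\Mon^2_{\operatorname{lt}}(Y)\subseteq\Or^+(\oH^2(Y,\ZZ))$ follows by lifting to $\widetilde{Y}$, using $\Mon^2(\widetilde{Y})\subseteq\Or^+(\oH^2(\widetilde{Y},\ZZ))$, and restricting to $e^\perp$; orientation is preserved because the positive-definite part of $\oH^2(\widetilde{Y},\ZZ)$ lies in $e^\perp$. For the reverse inclusion I would take $g\in\Or^+(e^\perp)$ and extend it to $\widetilde{g}\in\Or^+(\oH^2(\widetilde{Y},\ZZ))$ fixing $e$. Such an extension exists because $g$ acts as the identity on the discriminant group $\ZZ/2\ZZ$ of $e^\perp$, so the pair $(g,\id_{\langle e\rangle})$ preserves the isotropic glue subgroup cutting out $\oH^2(\widetilde{Y},\ZZ)$ inside $e^\perp\oplus\langle e\rangle$, and it is orientation-preserving by the same positivity remark. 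By Theorem~\ref{thm:Mon^2} we have $\widetilde{g}\in\Mon^2(\widetilde{Y})$, and since $\widetilde{g}$ fixes $e$ it descends to a locally trivial monodromy operator restricting to $g$.

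The main obstacle is the geometric content of the dictionary, rather than the lattice bookkeeping. Showing that a monodromy operator of $\widetilde{Y}$ fixing the exceptional class genuinely descends to a \emph{locally trivial} monodromy operator of $Y$ requires controlling the exceptional divisor in families: one must use its rigidity (it is the unique effective representative of its class up to multiples) to contract it simultaneously over the base, and then verify through \cite{BakkerLehn:Singular2016} that the contracted family is locally trivial. Symmetrically, surjectivity onto $\Or^+(e^\perp)$ depends on each operator fixing $e$ being realised by an actual loop of locally trivial deformations; this is where the strength of Theorem~\ref{thm:Mon^2} and the Torelli-type results for locally trivial families are indispensable.
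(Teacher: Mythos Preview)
Your proposal is correct and follows essentially the same route as the paper: both arguments pass through Bakker--Lehn's identification of $\Mon^2(\pi)$ with the stabiliser of $\widetilde{\Sigma}$ inside $\Mon^2(\widetilde{Y})$ (\cite[Corollary~5.18]{BakkerLehn:Singular2016}), invoke Theorem~\ref{thm:Mon^2} to make that stabiliser equal to $\Or^+(\oH^2(\widetilde{Y},\ZZ),\widetilde{\Sigma})$, and then use Nikulin's gluing criterion to compute the image of the restriction to $\widetilde{\Sigma}^\perp\cong\oH^2(Y,\ZZ)$. Your additional remark that the discriminant group $\ZZ/2\ZZ$ forces $\Ort^+(\oH^2(Y,\ZZ))=\Or^+(\oH^2(Y,\ZZ))$ is a pleasant simplification the paper leaves implicit, and the ``main obstacle'' you flag is precisely what \cite[Corollary~5.18]{BakkerLehn:Singular2016} provides off the shelf, so no further geometric work is needed beyond citing it.
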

We finish by remarking that the same computation in the case of a singular symplectic variety of OG6 type is done in \cite{MongardiRapagnetta:MonodromyOG6}, where it is shown that in their case the locally trivial monodromy group is the whole group of orientation preserving isometries. The difference between the singular OG10 and the singular OG6 cases can be explained in terms of their singularities: in fact, singular moduli spaces of OG10 type can be either locally factorial or $2$-factorial (\cite[Theorem~1.1]{PeregoRapagnetta:Factoriality}), while singular moduli spaces of OG6 type are always $2$-factorial (\cite[Theorem~1.2]{PeregoRapagnetta:Factoriality}).

\subsection*{Acknowledgments}
It is my pleasure to thank Gregory Sankaran and Giovanni Mongardi for invaluable help, advice
and support. The author strongly benefitted from discussions with Valeria Bertini, Klaus Hulek, Antonio Rapagnetta, Giulia Sacc\`a and Ziyu Zhang. In particular, I thank Klaus Hulek and Radu Laza for explaining Proposition~\ref{prop:HLS} to me.
I also wish to thank Simon Brandhorst, who pointed out a mistake in an intermediate draft of this work, and Samuel Boissi\`ere and Alastair Craw for having read a first version of this work providing useful and helpful advice.
Important remarks and feedback arose
from attending the "Japanese--European Symposium on symplectic varieties and moduli spaces":
the author wishes to thank the organisers for this interesting meeting. Finally, I warmly thank the anonymous referee for the keen review of this manuscript: their comments and questions have surely improved it a lot.
Part of this work was carried on during the author's 
PhD program by the University of Bath. He wishes to thank the University of Bath and the EPSRC, the Riemann Centre in Hannover, the INdAM project for young researchers "Pursuit of IHS manifolds" and the Research Council of Norway project no. 250104 for financial and administrative support.

\subsection*{Notations}
By lattice we mean a free $\ZZ$-module $L$ together  with a non-degenrate symmetric
bilinear form $(\cdot,\cdot)\colon L\times L\to\ZZ$. We usually simply write $x^2$ for
$(x,x)$. We denote by $L(-1)$ the lattice obtained from $L$ by changing the sign of the 
bilinear form.

Since the bilinear form is non-degenerate, there is a canonical embedding
$L\subset L^*$, where $L^*=\Hom(L,\ZZ)$ is the dual lattice. The \emph{discriminant group} $A_L$ is the finite group $L^*/L$.
If $L=\oH^2(X,\ZZ)$ is the Beauville--Bogomolov--Fujiki lattice associated to an irreducible
holomorphic symplectic manifold $X$, then we simply write $A_X$ for the discriminant group.

The group of isometries of $L$ is denoted by $\Or(L)$, while $\Ort(L)$ denotes the 
subgroup of isometries that act as the identity on the discriminant group. 
If $M\subset L$ is a sublattice, we denote by $\Or(L,M)$ the subgroup of isometries $g$
such that $g(M)=M$. 

An \emph{isotropic element} is a vector $x\in L$ such that $x^2=0$.

Finally, $U$ will always denote the hyperbolic plane, i.e.\ the unique unimodular even lattice
of signature $(1,1)$; $A_2$, $E_8$ and $G_2$ denote the root lattices associated to the respective Dynkin diagrams.

\section{Preliminaries}\label{section:preliminaries}
\subsection{Irreducible holomorphic symplectic manifolds}
\begin{defn}
A compact K\"ahler manifold $X$ is called \emph{irreducible holomorphic symplectic} if it is 
simply connected and $\oH^0(X,\Omega^2_X)=\CC\sigma_X$, where $\sigma_X$ is non-degenerate at any point.
\end{defn} 
It follows directly from the definition that $\oH^2(X,\ZZ)$ is a torsion free $\ZZ$-module;
it turns out to be a lattice thanks to the Beauville--Bogomolov--Fujiki form $q_X$
(\cite{Beauville:c1=0}). This lattice structure is indispensable for studying the 
geometry of an irreducible holomorphic symplectic manifold $X$; we refer to 
\cite{GrossHuybrechtsJoyce:CalabiYau} and \cite{Markman:Survey} for a detailed account
of results on their geometry.

Let $X_1$ and $X_2$ be two irreducible holomorphic symplectic manifolds that are
deformation equivalent.
\begin{defn}\label{defn:parallel transport operators}
\begin{enumerate}
\item We say that $g\colon\oH^2(X_1,\ZZ)\to\oH^2(X_2,\ZZ)$ is a \emph{parallel transport operator} 
if there exists a smooth and proper family $p\colon\cX\to B$, points $b_1,b_2\in B$ and isomorphisms 
$\varphi_i\colon X_i\stackrel{\sim}{\longrightarrow}\cX_{b_i}$ such that the composition 
$(\varphi_2^*)^{-1}\circ g\circ\varphi_1^*$ is the parallel transport inside the local system 
$R^2 p_*\ZZ$ along a path $\gamma$ from $b_1$ to $b_2$. 
Here $R^2p_*\ZZ$ is endowed with the Gauss-Manin connection.
\item If $X_1=X_2=X$ and $\gamma$ is a loop, then the parallel transport is called
\emph{monodromy operator}. 
Such isometries form a group $\Mon^2(X)$ called \emph{monodromy group} (see \cite[Footnote~3 at page~3]{Markman:Survey}).
\end{enumerate}
\end{defn}
\begin{rmk}\label{rmk:orient pub}
For any irreducible holomorphic symplectic manifold $X$, let us denote by $\omega_X$ a fixed 
K\"ahler class. In the following we write $X$ as a pair $(M,I)$ where $M$ is a differential manifold and $I$ a complex structure. By Yau's solution to Calabi's conjecture (\cite{Yau:Calabi}) there is a hyper-K\"ahler metric $g$ on $M$ representing $\omega_X$, i.e.\ $\omega_X=\omega_I=g(I(-),-)$. Since the metric is hyper-K\"ahler, there exists another K\"ahler structure $J$ such that $I$, $J$ and $IJ=K$ satisfy the quaternionic relations. The symplectic form $\sigma_X$ is then defined as $\omega_J+i\omega_K$.

The positive (real) three-space $\langle \omega_I,\omega_J,\omega_K\rangle=\langle\omega_X,\Re(\sigma_X),\Im(\sigma_X)\rangle\subset
\oH^2(X,\RR)$ comes then with a preferred orientation (given by this basis).
We say that an isometry $\oH^2(X_1,\ZZ)\to\oH^2(X_2,\ZZ)$ is \emph{orientation preserving}
if it preserves this orientation. By definition, any parallel transport operator is orientation
preserving. In particular, if $\Or^+(\oH^2(X,\ZZ))$ denotes the group of orientation 
preserving isometries, then $\Mon^2(X)\subset\Or^+(\oH^2(X,\ZZ))$. We refer to \cite[Section~4]{Markman:Survey} for more details about this phenomenon.
\end{rmk}
\begin{example}\label{example:monodromy of K3}
An irreducible holomorphic symplectic manifold $S$ of dimension $2$ is a $K3$ surfaces. In this case $\Mon^2(S)=\Or^+(\oH^2(S,\ZZ))$ (\cite[Proposition~5.5 in Chapter~7]{Huybrechts:K3Surfaces}).
\end{example}

Now we recall the construction of two families of irreducible holomorphic symplectic manifolds.

\subsection{Moduli spaces of sheaves}\label{subsection:moduli spaces of sheaves}
Let $S$ be a projective $K3$ surface.
The cohomology ring 
$$\widetilde{\oH}(S,\ZZ)=\oH^0(S,\ZZ)\oplus\oH^2(S,\ZZ)\oplus\oH^4(S,\ZZ)$$  has a natural Hodge structure of weight $2$ given by putting $\widetilde{\oH}^{2,0}(S,\ZZ)=\oH^{2,0}(S,\ZZ)$, and a natural lattice structure that we now recall. If $v=(v_0,v_1,v_2)\in\widetilde{\oH}(S,\ZZ)$, then
$$ v^2=v_1^2-2v_0v_2,$$
where $v_1^2$ is the standard intersection product on $\oH^2(S,\ZZ)$. $\widetilde{\oH}(S,\ZZ)$ with this lattice structure is called the Mukai lattice. A vector $v\in\widetilde{\oH}(S,\ZZ)$ is called a Mukai vector if it is positive in the sense of \cite[Definition~0.1]{Yoshioka:ModuliSpacesAbelianSurfaces}.

In the following we work with a Mukai vector $v\in\widetilde{\oH}(S,\ZZ)$ such that
$v=2w$, where $w$ is primitive and $w^2=2$. Once the vector $v$ is fixed, there is a decomposition in $v$-chambers and $v$-walls of the ample cone of $S$, and any polarisation in the interior of a $v$-chamber is called $v$-generic (see \cite[Section~2.1]{PeregoRapagnetta:DeformationsOfO'Grady}).
For a $v$-generic polarisation $H\in\Pic(S)$, the moduli space $M_v(S)$ of $H$-semistable
sheaves on $S$ is singular exactly 
at those points corresponding to S-equivalence classes of strictly semistable sheaves. Let us denote
by $\Sigma_v$ the singular locus of $M_v(S)$.
By \cite{Mukai:SymplecticStructure}, the smooth locus of $M_v(S)$ has a symplectic form.
\begin{thm}[\protect{\cite{O'Grady:10dimPublished},\cite{Rapagnetta:BeauvilleFormIHSM},\cite{LehnSorger:Singularity},\cite{PeregoRapagnetta:DeformationsOfO'Grady}}]\label{thm:big OG10}
Let $v$ and $H$ as above.
\begin{enumerate}
\item There exists a symplectic desingularisation $\pi\colon\widetilde{M}_v(S)\to M_v(S)$ such
that $\widetilde{M}_v(S)$ is an irreducible holomorphic symplectic manifold of dimension $10$.
Moreover, $\widetilde{M}_v(S)$ is the blow up of $M_v(S)$ at $(\Sigma_v)_{\operatorname{red}}$.
\item Let $S'$ be another K3 surface, and choose a Mukai vector $v'$ and a $v'$-generic polarisation $H'$ as above. Then $\widetilde{M}_{v'}(S',H')$ is deformation equivalent to $\widetilde{M}_v(S,H)$. The deformation is obtained by deforming the surface $S'$ to $S$, the Mukai vector $v'$ to $v$ and the generic polarisation $H'$ to $H$ according to the notion of OLS-triple defined in \cite[Definition~2.12]{PeregoRapagnetta:DeformationsOfO'Grady}.
\item The pullback
$$ \pi^*\colon \oH^2(M_v(S),\ZZ)\to\oH^2(\widetilde{M}_v(S),\ZZ)$$
is injective. In particular $\oH^2(M_v(S),\ZZ)$ has a pure Hodge structure of weight $2$ and a lattice structure inherited by those of $\oH^2(\widetilde{M}_v(S),\ZZ)$. Moreover, there exists a Hodge isometry
$$v^\perp\stackrel{\sim}{\longrightarrow}\oH^2(M_v(S),\ZZ).$$ 
\item There is an isometry 
$$\oH^2(\widetilde{M}_v(S),\ZZ)\cong U^3\oplus E_8(-1)^2\oplus G_2(-1),$$
where $G_2(-1)=\left(\begin{array}{rr}-2 & 3 \\ 3 & -6 \end{array}\right)$.
\end{enumerate}
\end{thm}
Any irreducible holomorphic symplectic manifold that is deformation equivalent to a desingularised moduli space $\widetilde{M}_v(S,H)$ as in the Theorem above is said to be of OG10 type.

Define
\begin{equation}\label{eqn:Gamma_v in general}
\Gamma_v:=\left\{\left(\alpha,k\frac{\sigma}{2}\right)\in(v^\perp)^*\oplus\ZZ\frac{\sigma}{2}\mid k\in2\ZZ \Leftrightarrow \alpha\in v^\perp\right\}\subset v^\perp_{\QQ}\oplus\QQ\sigma,
\end{equation}
with the non-degenerate pairing $b\left((w_1,m_1\sigma),(w_2,m_2\sigma)\right)=
(w_1,w_2)-6m_1m_2$. Notice in particular that $\sigma^2=-6$. Moreover, $\Gamma_v$ has a natural Hodge structure as follows: $v^\perp$ has a Hodge structure inherited by $\widetilde{\oH}(S,\ZZ)$) and we declare $\sigma$ to be of type $(1,1)$.
\begin{thm}[\protect{\cite[Theorem~3.1]{PeregoRapagnetta:Factoriality}}]\label{thm:factoriality}
$\Gamma_v$ is a lattice Hodge isometric to $\oH^2(\widetilde{M}_v(S),\ZZ)$.
\end{thm}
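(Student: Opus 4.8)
The plan is to treat this as a purely lattice-theoretic identification and then invoke Nikulin's uniqueness theorem, using the model for $\oH^2(\widetilde{M}_v(S),\ZZ)$ recorded in Theorem~\ref{thm:big OG10}(4). Write $L:=v^\perp=w^\perp$. Since $w$ is primitive with $w^2=2$ inside the unimodular Mukai lattice $\widetilde{\oH}(S,\ZZ)\cong U^4\oplus E_8(-1)^2$, the complement $L$ has signature $(3,20)$ and discriminant form $A_L\cong(\ZZ/2\ZZ,-\tfrac12)$ (the negative of that of $\langle w\rangle$). I would then identify $\Gamma_v$ as the overlattice of $L\oplus\ZZ\sigma$ (with $\sigma^2=-6$) obtained by gluing the non-trivial class of $A_L$ to the $2$-torsion element $3\cdot\tfrac{\sigma}{6}$ of $A_{\langle\sigma\rangle}\cong\ZZ/6\ZZ$: the defining condition says precisely that $\bigl(\alpha,k\tfrac{\sigma}{2}\bigr)$ lies in $\Gamma_v$ with $k$ odd exactly when $\alpha$ represents the non-trivial discriminant class of $L$. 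The glue vector is the class of $\bigl(\alpha_0,\tfrac{\sigma}{2}\bigr)$ with $\alpha_0\in L^*\setminus L$, and $\Gamma_v$ contains $L\oplus\ZZ\sigma$ with index $2$.

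First I would check that $\Gamma_v$ really is an even lattice, i.e.\ that $b$ is $\ZZ$-valued and even on it. This amounts to the glue vector being isotropic for the discriminant form of $L\oplus\langle\sigma\rangle$, which holds because
\[
q(\alpha_0)+q\!\left(\tfrac{\sigma}{2}\right)=-\tfrac12+9\cdot\left(-\tfrac16\right)=-2\equiv0\pmod{2\ZZ}.
\]
Unwinding this on actual vectors, the only case needing attention is $k,l$ both odd, where $(\alpha,\beta)\equiv\tfrac12$ and $\tfrac{3kl}{2}\equiv\tfrac12\pmod{\ZZ}$ cancel to give $b\in\ZZ$, and $\alpha^2\equiv-\tfrac12\equiv\tfrac{3k^2}{2}\pmod{2\ZZ}$ gives $b(x,x)\in2\ZZ$; the even cases are immediate.

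Next I would compute the invariants. The signature is $(3,21)$, since $L$ has signature $(3,20)$ and $\sigma^2=-6<0$ adds $(0,1)$. For the discriminant form, $L\oplus\ZZ\sigma$ has discriminant group $\ZZ/2\oplus\ZZ/6$ of order $12$; passing to the index-$2$ overlattice divides this order by $4$, so $A_{\Gamma_v}$ has order $3$. Identifying $A_{\Gamma_v}$ with $g^\perp/\langle g\rangle$ inside $\ZZ/2\oplus\ZZ/6$, one finds it is cyclic of order $3$, generated by the class of $\tfrac{\sigma}{3}$, with $(\tfrac{\sigma}{3})^2=-\tfrac23\in\QQ/2\ZZ$; thus $A_{\Gamma_v}\cong(\ZZ/3,-\tfrac23)$. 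On the other side, $U^3\oplus E_8(-1)^2$ is unimodular, while a direct computation with the Gram matrix of $G_2(-1)$ (whose determinant is $3$) gives $A_{G_2(-1)}\cong(\ZZ/3,-\tfrac23)$ as well, with total signature $(3,21)$.

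Finally, both $\Gamma_v$ and $U^3\oplus E_8(-1)^2\oplus G_2(-1)$ are even and indefinite of signature $(3,21)$ and share the discriminant form $(\ZZ/3,-\tfrac23)$; since $\ell(A_{\Gamma_v})=1\le\rk\Gamma_v-2$, Nikulin's criterion guarantees uniqueness in the genus, so the two lattices are isometric, and with Theorem~\ref{thm:big OG10}(4) I conclude $\Gamma_v\cong\oH^2(\widetilde{M}_v(S),\ZZ)$. I expect the crux to be bookkeeping rather than conceptual: pinning down the glue vector and the sign of the discriminant form so that the match with $G_2(-1)$ is exact, since any slip there (for instance in the identification $A_L\cong\ZZ/2$ or in $(\tfrac{\sigma}{3})^2$) would break the comparison. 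The genuinely geometric input is invisible at this stage, being already packaged in Theorem~\ref{thm:big OG10}: that $\pi^*v^\perp$ is a primitive sublattice with rank-one orthogonal complement generated by the exceptional class $\sigma$ normalised to $\sigma^2=-6$ is exactly what makes the abstract overlattice $\Gamma_v$ the correct model for the cohomology.
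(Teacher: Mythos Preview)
The paper does not prove this theorem; it is quoted from \cite{PeregoRapagnetta:Factoriality} without argument. Your lattice-theoretic proof is correct: you identify $\Gamma_v$ as the index-$2$ even overlattice of $v^\perp\oplus\ZZ\sigma$ glued along $(\alpha_0,\tfrac{\sigma}{2})$, verify that the glue is isotropic, compute the resulting discriminant form as $(\ZZ/3,-\tfrac{2}{3})$, and match it to that of $U^3\oplus E_8(-1)^2\oplus G_2(-1)$ via Nikulin's uniqueness in the genus, invoking Theorem~\ref{thm:big OG10}(4). One small wrinkle: the defining condition in the paper reads $\alpha\in(v^\perp)_{\operatorname{alg}}$ rather than $\alpha\in v^\perp$; you have silently (and, in view of Example~\ref{example:O'Grady moduli space} and the later description of $\Gamma_{v_2}^{1,1}$, correctly for the underlying lattice) interpreted it as the latter.

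This is, however, a different and strictly weaker route than the one taken in the cited source. Perego and Rapagnetta construct a \emph{specific} Hodge isometry extending the embedding $\pi^*\colon v^\perp\hookrightarrow\oH^2(\widetilde{M}_v(S),\ZZ)$ of Theorem~\ref{thm:big OG10}(3) and sending $\sigma$ to the exceptional class $\widetilde{\Sigma}$; that naturality is exactly what the paper exploits in Example~\ref{example:O'Grady moduli space} and throughout Section~\ref{section:bridge} (e.g.\ in Proposition~\ref{prop:psi}). Your Nikulin argument produces only an abstract isometry, so while it establishes the theorem as literally stated, it does not by itself justify the explicit formula in Example~\ref{example:O'Grady moduli space} or the identification $\Pic(\widetilde{M}_v(S))\cong\Gamma_v^{1,1}$ used downstream.
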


\begin{example}[O'Grady moduli space]\label{example:O'Grady moduli space}
Let us fix $v=(2,0,-2)$. In this case, we use the short notation $\widetilde{M}_S$ and $M_S$
instead of $\widetilde{M}_{(2,0,-2)}(S)$ and $M_{(2,0,-2)}(S)$. The space $\widetilde{M}_S$ is called \emph{O'Grady moduli space}, while the space $M_S$ is called \emph{singular O'Grady moduli space} (cf.\ Example~\ref{example:singular moduli spaces}), since this is the case first studied by O'Grady in \cite{O'Grady:10dimPublished}.
The locus $B_S=M_S\setminus M^{\operatorname{lf}}_S$ of non-locally free sheaves is a Weil
divisor (\cite[Section~3.1]{O'Grady:10dimPublished}) and we denote by $\widetilde{B}_S$ its strict transform. We notice that, by \cite{Perego:2Factoriality}, $B_S$ is not Cartier, but $2B_S$ is. Since $\widetilde{M}_S$ is smooth, the divisor $\widetilde{B}_S$ is always Cartier and by \cite{Rapagnetta:BeauvilleFormIHSM}, 
$$\langle\widetilde{B}_S,\widetilde{\Sigma}_S\rangle=G_2(-1),$$ 
where $\widetilde{\Sigma}_S$ is the exceptional divisor of the desingularisation. Here $\langle\widetilde{B}_S,\widetilde{\Sigma}_S\rangle\subset\oH^2(\widetilde{M}_S,\ZZ)$ is the sublattice of the Beauville--Bogomolov--Fujiki lattice generated by $\widetilde{B}_S$ and $\widetilde{\Sigma}_S$. More precisely, Rapagnetta shows that $\widetilde{\Sigma}_S^2=-6$, $\widetilde{B}_S^2=-2$ and $(\widetilde{\Sigma}_S,\widetilde{B}_S)=3$.
Moreover, Rapagnetta also explicitly describes an isometry
$$\oH^2(\widetilde{M}_S,\ZZ)\cong\oH^2(S,\ZZ)\oplus\langle\widetilde{B}_S,\widetilde{\Sigma}_S\rangle,$$
where the inclusion of $\oH^2(S,\ZZ)$ in $\oH^2(\widetilde{M}_S,\ZZ)$ is the composition of the Donaldson's map and the pullback by the desingularisation $\pi_S\colon\widetilde{M}_S\to M_S$ (see also \cite[Section~5]{O'Grady:10dimPublished}). 

Finally, by \cite[Remark~3.2]{PeregoRapagnetta:Factoriality}, the isometry $\Gamma_{(2,0,-2)}\cong\oH^2(\widetilde{M}_S,\ZZ)$ is explicitly given by the
function 
\begin{equation}\label{eqn:Gamma_v}
\left(\left(\frac{n}{2},\xi,\frac{n}{2}\right),k\frac{\sigma}{2}\right)\mapsto\xi+n\widetilde{B}+\frac{n+k}{2}\widetilde{\Sigma}.
\end{equation}
For future reference, we notice that the particular case in which $k=0$ is exactly the isometry $v^\perp\cong\oH^2(M_S,\ZZ)$ in item $(3)$ of Theorem~\ref{thm:big OG10} in this case (loc.\ cit.).
\end{example}
\begin{rmk}\label{rmk:factoriality}
The singular O'Grady moduli space $M_S$ in Example~\ref{example:O'Grady moduli space} is $2$-factorial (\cite{Perego:2Factoriality}). In general, the factoriality of $M_v(S)$
depends on the vector $w$ such that $v=2w$: if there exists $u\in\widetilde{\oH}^{1,1}(S,\ZZ)$
such that $(u,w)=1$, then $M_v(S)$ is $2$-factorial; if $(u,w)\in2\ZZ$ for every
$u\in\widetilde{\oH}^{1,1}(S,\ZZ)$, then $M_v(S)$
is locally factorial (cf.\ \cite[Theorem~1.1]{PeregoRapagnetta:Factoriality}).
\end{rmk}

\begin{example}[Torsion sheaves]\label{exe:torsion sheaves}
Let $S$ be a projective K3 surface and $H$ a polarisation such that $H^2=2$. Then any vector of the form $v=(0,2H,b)$ gives a moduli space $M_v(S)$ of dimension $10$. From now on we always assume the polarisation to be $v$-generic. If $b$ is odd, the moduli space is smooth (\cite{Yoshioka:ModuliSpacesAbelianSurfaces}); if $b=2a$ is even, then we are in the situation of Theorem~\ref{thm:big OG10} and the singular moduli space $M_v(S)$ admits a symplectic desingularisation that is an irreducible holomorphic symplectic manifold of type OG10.

Let us then consider vectors of the form $(0,2H,2a)$. A general sheaf $F\in M_v(S)$ is of the form $i_*L$, where $i\colon C\to S$ is an embedding, $C\in|2H|$ is a smooth curve of genus $5$ and $L$ is a line bundle on $C$ of degree $2a+4$. In particular $M_v(S)$ contains the relative Picard variety $Pic^{2a+4}_U$, where $U\subset|2H|$ is the open subset parametrising smooth curves. 

There is a natural morphism 
$$p_v\colon M_v(S)\longrightarrow|2H|$$
defined by sending a sheaf to its Fitting support (see \cite[Section~1.4]{Mozgovyy:PhD}). In this way $M_v(S)$ can be thought as a projective compactification of $Pic^{2a+4}_U$.
Composing the morphism $p_v$ with the desingularisation morphism $\pi_v$ we get a morphism
$$
(p_v\circ\pi_v)\colon\widetilde{M}_v(S)\longrightarrow|2H|\cong\PP^5
$$
that is a Lagrangian fibration  (\cite{Matsushita:OnFibreSpace}).

Finally, if $\Pic(S)=\ZZ H$, then we can directly read the factoriality property of $M_v(S)$ in terms of the parity of $a$. Recall that $v=(0,2H,2a)$. By Remark~\ref{rmk:factoriality}, $M_v(S)$ is factorial if $a$ in even and $2$-factorial if $a$ is odd.
For example, the moduli space $M_{(0,2H,-4)}(S)$ is factorial, while the moduli space $M_{(0,2H,-2)}(S)$ is $2$-factorial (we will use this remark in Section~\ref{subsection:hyperelliptic}).
\end{example}

\subsection{Intermediate Jacobian fibrations}\label{subsection:J_V}
Let $V\subset\PP^5$ be a smooth cubic fourfold and $U\subset\PP(\oH^0(V,\cO(1))^*)$ the
open subset parametrising smooth linear sections. 

If $Y\in U$, the intermediate Jacobian of $Y$ is the principally polarised abelian variety defined by
$$ J_Y:=\oH^{2,1}(Y)^*/\oH_3(Y,\ZZ),$$
where $\oH_3(Y,\ZZ)\subset \oH^{2,1}(Y)^*$ by integration over cycles.

Running this construction relatively over $U$ yields an intermediate Jacobian fibration 
\begin{equation}\label{eqn:LSV over U}
\pi_U\colon\cJ_U\longrightarrow U
\end{equation}
that is Lagrangian with respect to a non-degenerate holomorphic closed $2$-form on $\cJ_U$
(\cite{DonagiMarkman:SpectralCovers}).
\begin{thm}[\cite{LSV:O'Gr10}]\label{thm:LSV}
Suppose that $V$ is very general. 
Then there exists a symplectic compactification 
$$ \pi_V\colon\cJ_V\longrightarrow\PP^5$$
of the intermediate Jacobian fibration (\ref{eqn:LSV over U}), such that $\cJ_V$ is an irreducible 
holomorphic symplectic manifold of OG10 type. Moreover, $\pi_V$ is a Lagrangian fibration.
\end{thm}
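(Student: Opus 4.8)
The statement is the Laza--Sacc\`a--Voisin construction, so the plan is to produce $\cJ_V$ by compactifying the open fibration $\pi_U\colon\cJ_U\to U$ of (\ref{eqn:LSV over U}) and then to verify the three assertions: smoothness together with an extending symplectic form, irreducibility, and membership in the OG10 deformation class. The starting point is the work of Donagi--Markman (\cite{DonagiMarkman:SpectralCovers}): since $V$ is a cubic fourfold one has $h^{3,1}(V)=1$, and this one-dimensional piece of the Hodge structure endows the total space $\cJ_U$ with a non-degenerate holomorphic $2$-form $\sigma$ for which $\pi_U$ is Lagrangian (each fibre $J_Y$ is a $5$-dimensional principally polarised abelian variety and $\dim\cJ_U=10$). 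Thus the symplectic content over the smooth locus $U$ is already available, and everything left to prove concerns the behaviour over the discriminant $\Delta=\PP^5\setminus U$.

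The heart of the argument is to extend $\pi_U$ to a fibration $\pi_V\colon\cJ_V\to\PP^5$ with \emph{smooth} total space. I would proceed stratum by stratum along $\Delta$, exploiting the concrete geometry of intermediate Jacobians of singular cubic threefolds. Over the open part of $\Delta$, where the section $Y$ acquires a single node, $J_Y$ degenerates to a semi-abelian variety that is a $\CC^*$-extension of a lower-dimensional abelian variety; here Mumford--Namikawa degeneration theory (relative compactified Jacobians and torus-embedding techniques) produces a canonical smooth compactification of the fibre, and one glues these over the one-nodal locus. For the deeper strata one needs a global model: I would construct $\cJ_V$ as the relevant component of (the normalisation of) a relative compactified Jacobian, equivalently through the relative theta-divisor / Prym description relating $J_Y$ to the discriminant of the hyperplane section, and then check that the generality of $V$ keeps the resulting total space smooth. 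The hypothesis that $V$ be general is used precisely to guarantee that the worst singular sections occurring in the family are mild enough for this compactification to remain smooth.

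Once $\cJ_V$ is a smooth projective (hence K\"ahler) compactification, I would extend $\sigma$ across $\cJ_V\setminus\cJ_U$: since the added locus lies over $\Delta$ and the construction arranges its codimension and normal-crossing type suitably, a Hartogs-type extension (reflexivity of the sheaf of $2$-forms together with the Lagrangian structure) yields a holomorphic $2$-form on all of $\cJ_V$, which is symplectic because it is already non-degenerate on the dense open $\cJ_U$. To see that $\cJ_V$ is irreducible holomorphic symplectic one checks $\pi_1(\cJ_V)=1$ and $h^{2,0}(\cJ_V)=1$; simple connectivity follows from that of $\PP^5$ and of the general fibre together with the structure of the degenerate fibres, and $h^{2,0}=1$ from the fact that $\sigma$ spans $\oH^0(\cJ_V,\Omega^2)$. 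Finally, to place $\cJ_V$ in the OG10 deformation type I would degenerate $V$ within its moduli to a special cubic fourfold (e.g.\ a chordal cubic, the secant variety of a Veronese surface) associated to a $K3$ surface $S$, for which the intermediate Jacobian fibration becomes bimeromorphic to the O'Grady moduli space $\widetilde{M}_S$ of Example~\ref{example:O'Grady moduli space}; deformation equivalence along this degeneration identifies the deformation class, and Theorem~\ref{thm:big OG10}(2) then gives OG10 type.

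The main obstacle is exactly the compactification over the non-generic strata of $\Delta$: controlling the degenerate intermediate Jacobians of cubic threefolds with several or non-nodal singularities, and arranging that the total space $\cJ_V$ stays smooth so that $\sigma$ extends, is the delicate geometric input, and it is here that the assumption that $V$ be general does the real work.
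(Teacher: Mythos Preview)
The paper does not prove this theorem: it is stated as a black box, attributed to \cite{LSV:O'Gr10} in the theorem header, and no argument is given. So there is nothing in the paper to compare your proposal against.

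That said, your sketch is a reasonable high-level outline of the Laza--Sacc\`a--Voisin strategy, with two caveats worth flagging. First, the actual smooth compactification in \cite{LSV:O'Gr10} is not built via a relative compactified Jacobian or Mumford--Namikawa torus embeddings in the way you describe; the key geometric input is the existence, for every hyperplane section $Y$ of a general $V$, of a ``very good line'' allowing one to realise $J_Y$ as a relative Prym of a family of cubic surfaces, and the compactification is obtained by descent from this auxiliary family. Second, the identification of the deformation type in \cite{LSV:O'Gr10} is done by degenerating $V$ to a \emph{Pfaffian} cubic (so that $\cJ_V$ specialises birationally to a moduli space of sheaves on the associated $K3$), not to the chordal cubic; the chordal degeneration you invoke is the approach of \cite{KollarLazaSaccaVoisin:Degenerations}, used later in the present paper for a different purpose (Section~\ref{section:bridge}). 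Also, simple connectedness and $h^{2,0}=1$ are not checked directly in \cite{LSV:O'Gr10} but are deduced a posteriori from the deformation equivalence with OG10.
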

Here very general means outside a countable union of divisors in the moduli space of smooth cubic fourfolds.
The hypothesis of the statement above can be relaxed to general cubic fourfolds, that is to cubic fourfolds contained in a Zariski open subset of the moduli space of smooth cubic fourfolds (see for example \cite[Remark~4.2]{Brosnan:PerverseObstractions}). We notice though that recently G.\ Sacc\`a constructed an irreducible symplectic compactification of $\cJ_U$ for any smooth cubic fourfold (and even mildly singular cubic fourfolds), see \cite{Sacca:BirationalGeometryLSV}. Sacc\`a's compactification is obtained by using recent developments in the minimal model program; in particular it is not constructive and, a priori, not even unique.

\subsection{Singular symplectic varieties}\label{section:Singular}
\begin{defn}
A \emph{singular symplectic variety} $Y$ is a normal complex variety such that its regular locus $Y_{\operatorname{reg}}$ has a symplectic form that extends holomorphically to any resolution of singularities. 

A \emph{symplectic resolution of singularities} of $Y$ is a resolution of singularities $\pi\colon X\to Y$ such that the symplectic form on $Y_{\operatorname{reg}}$ extends holomorphically to a symplectic form on $X$. We say that $\pi\colon X\to Y$ is an \emph{irreducible symplectic resolution of singularities} if moreover $X$ is an irreducible holomorphic symplectic manifold.
\end{defn}
In the following we will only be interested in singular symplectic varieties having an irreducible symplectic resolution of singularities, and we refer to \cite{BakkerLehn:Singular2016} for results and background. 
The main and unique example we consider in this paper is the following.
\begin{example}\label{example:singular moduli spaces}
We use the same notations as in Section~\ref{subsection:moduli spaces of sheaves}.
Let $S$ be a projective $K3$ surface and $v=2w\in\widetilde{\oH}(S,\ZZ)$ a Mukai vector such that $w$ is primitive and $w^2=2$. For any choice of $v$-generic polarisation $H$, the moduli space $M_v(S)$ of $H$-semistable sheaves is a singular symplectic variety having an irreducible symplectic resolution of singularity by Theorem~\ref{thm:big OG10} (see also \cite{Mukai:SymplecticStructure} for the existence of the symplectic form on the regular part of $M_v(S)$). 

When the Mukai vector is $v=(2,0,-2)$, then $M_v(S)=M_S$ is the singular O'Grady moduli space of Example~\ref{example:O'Grady moduli space}.
\end{example}
If $Y$ is a singular symplectic variety with an irreducible symplectic resolution of singularities, then $\oH^2(Y,\ZZ)$ is endowed with a non-degenerate symmetric bilinear form turning it into a lattice of signature $(3,b_2(Y)-3)$. More precisely, if $\pi\colon X\to Y$ is the irreducible symplectic resolution, then by \cite[Lemma~3.5]{BakkerLehn:Singular2016} the pullback $\pi\colon\oH^2(Y,\ZZ)\to\oH^2(X,\ZZ)$ is injective and the restriction of the Beauville--Bogomolov--Fujiki form on $\oH^2(X,\ZZ)$ to $\oH^2(Y,\ZZ)$ is non-degenerate. Moreover, by the same result, the pullback is an isomorphism on the transcendetal part and the orthogonal complement of $\oH^2(Y,\ZZ)$ in $\oH^2(X,\ZZ)$ is negative definite, justifying the previous claim on the signature of $\oH^2(Y,\ZZ)$. This lattice structure is invariant under locally trivial deformations, according to the following definition.
\begin{defn}
A locally trivial family is a proper morphism $f\colon\cY\to T$ of complex analytic spaces such that, for every point $y\in\cY$, there exist open neighborhoods $V_y\subset \cY$ and $V_{f(y)}\subset T$, and an open subset $U_y\subset f^{-1}(f(y))$ such that 
$$V_y\cong U_y\times V_{f(y)}.$$
\end{defn}
If the morphism $f$ is smooth, i.e.\ the family is smooth, then the condition in the definition is trivially satisfied. The most important example for our purpose is the following.
\begin{example}\label{exe:family of OG moduli spaces}
Let $p\colon\cS\to T$ be a smooth family of projective $K3$ surfaces, and suppose that there exists a flat section $\cH\in R^2p_*\ZZ$ such that $\cH_t$ is a generic polarisation for every $t\in T$. Notice that, the second entry being $0$, the Mukai vector $(2,0,-2)$ extends to a flat section of the local system $\widetilde{R}p_*\ZZ$, whose stalk at $t\in T$ is the Mukai lattices $\widetilde{\oH}(\mathcal{S}_t,\ZZ)$. Then the relative O'Grady moduli space $f\colon\cM\to T$, whose fibres are the O'Grady moduli spaces $\cM_t=M_{\cS_t}$, is a locally trivial family (see \cite[Proposition~2.16]{PeregoRapagnetta:DeformationsOfO'Grady}).
\end{example}
\begin{defn}
Let $Y$ be a singular symplectic variety and $\pi\colon X\to Y$ a symplectic resolution of singularities.
\begin{enumerate}
\item The locally trivial monodromy group $\Mon^2(Y)_{\operatorname{lt}}$ of $Y$ is the subgroup of $\Or(\oH^2(Y,\ZZ))$ generated by isometries arising by parallel transport along loops in a locally trivial family of $Y$.
\item The monodromy group $\Mon^2(\pi)$ of the desingularisation $\pi$ is the subgroup of the product $\Or(\oH^2(X,\ZZ))\times\Or(\oH^2(Y,\ZZ))$ consisting of pairs $(g_1,g_2)$ such that $g_1\in\Mon^2(X)$ and $g_2\in\Mon^2(Y)_{\operatorname{lt}}$, and such that $g_1\circ\pi^*=\pi^*\circ g_2$.
\end{enumerate}
\end{defn}

\section{Monodromy operators coming from the family of O'Grady moduli spaces}\label{section:M_S}
Let $S$ be a projective $K3$ surface, $H$ a generic polarisation,
$M_S$ the O'Grady moduli space and $\widetilde{M}_S$ its symplectic desingularisation. 
We refer to Example~\ref{example:O'Grady moduli space} for notations.
In particular, we always denote by $G_2(-1)$ the lattice generated by the divisor 
$\widetilde{B}_S$ and the exceptional divisor $\widetilde{\Sigma}_S$.

Recall that
$$ \oH^2(\widetilde{M}_S,\ZZ)\cong\oH^2(S,\ZZ)\oplus G_2(-1). $$
In the following, $\Or^+(\oH^2(\widetilde{M}_S,\ZZ),G_2(-1))$ is the group of isometries fixing the sublattice $G_2(-1)$.
The restriction map
\begin{equation}\label{eqn:r M_S}
r\colon\Or^+(\oH^2(\widetilde{M}_S,\ZZ),G_2(-1))\longrightarrow\Or^+(\oH^2(S,\ZZ))
\end{equation}
is surjective and $\Or^+(\oH^2(S,\ZZ))=\Mon^2(S)$ (see Example~\ref{example:monodromy of K3}).
We want to show that, given a monodromy operator $g\in\Mon^2(S)$, there exists a canonical 
extension $\tilde{g}\in\Or^+(\oH^2(\widetilde{M}_S,\ZZ),G_2(-1))$ such that 
$\tilde{g}\in\Mon^2(\widetilde{M}_S)$. As we will see, this extension is given by the identity on 
$G_2(-1)$.
\medskip

Let $T$ be a curve and $(S,H)$ a polarised $K3$ surface. Let $\mathcal{S}_T\to T$ be a 
deformation family such that $\mathcal{S}_0=S$ for a base point $0\in T$ and 
let $\cH_T$ be a line bundle on $\mathcal{S}_T$, flat over $T$, such that $\cH_0=H$ (see Example~\ref{exe:family of OG moduli spaces}). 
It is known that the set of points $t\in T$ such that $\cH_t$ is not ample is finite.
Moreover, Perego and Rapagnetta notice in 
\cite[Lemma~2.6]{PeregoRapagnetta:DeformationsOfO'Grady} that the set of points
$t\in T$ such that $\cH_t$ is not generic is also finite. We summarise this remark in the following
statement for future reference.
\begin{lemma}\label{lemma:def O'Grady on curves}
Up to removing a finite number of points from $T$, we can suppose that $\cH_t$ is
ample and generic for every $t\in T$.
\end{lemma}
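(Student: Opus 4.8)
The plan is to show that the ``bad'' locus
$B=\{t\in T\mid \cH_t \text{ is not ample or not generic}\}$
is a \emph{finite} subset of $T$; since $T$ is a (smooth, irreducible) curve, one then simply deletes $B$ and replaces $T$ by $T\setminus B$. Accordingly I would treat the two failure conditions separately, show each cuts out a finite subset, and take the union.

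First I would handle ampleness. Over the smooth proper family $\mathcal{S}_T\to T$, ampleness of the flat line bundle $\cH_T$ is an open condition on the base: this follows from the Nakai--Moishezon criterion together with the constancy (or semicontinuity) of fibrewise intersection numbers in a flat family, equivalently from the openness of the ample cone in a proper flat family. Since by hypothesis $\cH_0=H$ is ample, the locus $T^{\mathrm{amp}}\subset T$ where $\cH_t$ is ample is a nonempty Zariski-open subset. Its complement is therefore a proper Zariski-closed subset of the irreducible curve $T$, hence a finite set of points.

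For genericity I would invoke \cite[Proposition~2.20]{PeregoRapagnetta:DeformationsOfO'Grady} directly, exactly as recalled in the paragraph preceding the statement. Conceptually, a polarisation is generic for the fixed Mukai vector $v$ precisely when it avoids the numerical walls in the ample cone determined by potential destabilising sub-Mukai-vectors of $v$; the defining data of these walls are controlled by the (fixed) discriminant of $v$, so the walls are locally finite in the ample cone and only finitely many of them can meet the image of the section $\cH_T$ as $t$ ranges over the one-parameter base. This gives the finiteness of the non-generic locus $T^{\mathrm{ngen}}\subset T$.

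Combining the two steps, $B=(T\setminus T^{\mathrm{amp}})\cup T^{\mathrm{ngen}}$ is a union of two finite sets, hence finite, and removing it from $T$ yields the claim. In truth this lemma is a bookkeeping statement that merely packages the two cited finiteness facts, so I would not expect any genuine obstacle: the only nontrivial input is the finiteness of the non-generic locus, which is precisely the content of Perego and Rapagnetta's result, and the role of this lemma is just to record the consequence for later reference.
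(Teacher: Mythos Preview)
Your proposal is correct and matches the paper's approach exactly: the paper gives no separate proof of this lemma, but merely records in the preceding paragraph that the non-ample locus is finite (well known) and that the non-generic locus is finite by \cite[Proposition~2.20]{PeregoRapagnetta:DeformationsOfO'Grady}, then states the lemma as a summary for future reference. Your write-up simply fleshes out these two observations and is, if anything, more detailed than what the paper provides.
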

In the following we assume that $\cH_T$ satisfies the conditions of 
Lemma~\ref{lemma:def O'Grady on curves}.

Consider the relative moduli space $\mathcal{M}_T\to T$ (resp.\ $\mathcal{M}^s_T$) parametrising
rank $2$ semistable (resp.\ stable) sheaves on the fibres of $\mathcal{S}_T\to T$ with trivial 
determinant and second Chern class equal to $4$ (cf.\ 
\cite[Theorem~4.3.7]{HuybrechtsLehn:ModuliSpaces}). 
Notice that $\mathcal{M}^s_T\subset\mathcal{M}_T$ is open.

Since $\mathcal{M}_t$ is reduced and
irreducible for every $t\in T$, $\mathcal{M}_T$ is flat over $T$
(\cite[Proposition~II.2.19]{EisenbudHarris:GeometryOfSchemes} and
\cite[Lemma~2.15]{PeregoRapagnetta:DeformationsOfO'Grady}) and we can think of it as a 
deformation of (singular) moduli spaces. Now, define $\Sigma_T:=\mathcal{M}_T\setminus\mathcal{M}^s_T$.
As explained in the proof of \cite[Proposition~2.16]{PeregoRapagnetta:DeformationsOfO'Grady},
since $\cH_t$ is generic for every $t\in T$, $\Sigma_t$ is an irreducible closed subvariety which
coincides with the singular locus of $\mathcal{M}_t$. 
\begin{rmk}\label{rmk:modular description of Sigma}
Notice that $\Sigma_T$ has a modular description as the relative second symmetric product 
$\mathcal{S}ym^2\mathcal{S}_T^{[2]}$ (cf.\ first part of the proof of \cite[Proposition~2.16]{PeregoRapagnetta:DeformationsOfO'Grady}). In fact, by \cite[Lemma~1.1.5]{O'Grady:10dimPublished}, $\Sigma_t=\Sym^2\mathcal{S}_t^{[2]}$.  The singular locus of $\Sigma_T$ is then identified 
with $\mathcal{S}_T^{[2]}$.
This implies that $(\Sigma_{\operatorname{red}})_t=(\Sigma_t)_{\operatorname{red}}$ for every $t\in T$.
\end{rmk}
By \cite[Proposition~II.2.19]{EisenbudHarris:GeometryOfSchemes} we have that
$\Sigma_T$ and $(\Sigma_T)_{\operatorname{red}}$ are flat over $T$.
Blowing up $\mathcal{M}_T$ at $(\Sigma_T)_{\operatorname{red}}$ yields a projective and flat projection
\begin{equation}\label{eqn:def of OG10 from K3}
p\colon\widetilde{\mathcal{M}}_T\longrightarrow T
\end{equation} 
such that $\widetilde{\mathcal{M}}_t=\widetilde{M}_{\mathcal{S}_t}$.
Notice that a priori it is not obvious that the blow-up of the family is the family of the blow-ups: 
this follows from 
\cite[Proposition~2.16]{PeregoRapagnetta:DeformationsOfO'Grady}.

The family (\ref{eqn:def of OG10 from K3}) is the deformation family of O'Grady manifolds
associated to a deformation of polarised $K3$ surfaces.

The first remark is the following.
\begin{lemma}\label{lemma:monodromy preserves Sigma}
Let $\widetilde{M}_S$ be the O'Grady desingularisation of $M_S$ and $\widetilde{\Sigma}_S$ 
the exceptional divisor. Any monodromy operator $g$ arising from a deformation family
(\ref{eqn:def of OG10 from K3}), as constructed before, must satisfy the equality
$g(\widetilde{\Sigma}_S)=\widetilde{\Sigma}_S$.
\end{lemma}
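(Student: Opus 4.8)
The plan is to show that any monodromy operator $g$ coming from a family of the form~(\ref{eqn:def of OG10 from K3}) must fix the class of the exceptional divisor $\widetilde{\Sigma}_S$, by exploiting the fact that the exceptional divisor is defined \emph{relatively} over the whole base $T$. The key structural input is that the construction of the family was entirely canonical: starting from the deformation $\mathcal{S}_T\to T$ of polarised $K3$ surfaces, one forms the relative moduli space $\mathcal{M}_T$, identifies its relative singular locus $(\Sigma_T)_{\mathrm{red}}$, and blows up to obtain $\widetilde{\mathcal{M}}_T\to T$. The exceptional locus $\widetilde{\Sigma}_T$ of this blow-up is then a divisor in the total space $\widetilde{\mathcal{M}}_T$, flat over $T$, whose fibre over each $t\in T$ is exactly the exceptional divisor $\widetilde{\Sigma}_{\mathcal{S}_t}$ of the individual desingularisation.

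Granting this, I would argue as follows. The monodromy operator $g$ is, by definition, parallel transport in the local system $R^2p_*\ZZ$ along a loop $\gamma$ in $T$ based at $0$. A flat global section of this local system restricting to a prescribed class in a single fibre is necessarily preserved by every such parallel transport. The class of $\widetilde{\Sigma}_S = \widetilde{\Sigma}_0$ is the restriction to the fibre over $0$ of the global divisor class $[\widetilde{\Sigma}_T]$, and because $\widetilde{\Sigma}_T$ is a divisor on the total space (flat over $T$, with the correct fibres by Remark~\ref{rmk:modular description of Sigma} together with the flatness of $(\Sigma_T)_{\mathrm{red}}$), the family of classes $\{[\widetilde{\Sigma}_t]\}_{t\in T}$ is a flat section of $R^2p_*\ZZ$. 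Parallel transport along the loop $\gamma$ therefore carries $[\widetilde{\Sigma}_0]$ back to itself, which is precisely the assertion $g(\widetilde{\Sigma}_S)=\widetilde{\Sigma}_S$.

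\textbf{The main obstacle} is to make rigorous the claim that the relative exceptional divisor yields a genuinely flat (hence monodromy-invariant) section of the local system. The point is that $[\widetilde{\Sigma}_T]$ lives in $\oH^2(\widetilde{\mathcal{M}}_T,\ZZ)$, and one must verify that its restrictions to the fibres glue into a single horizontal section for the Gauss--Manin connection. The cleanest way to see this is topological: since $\widetilde{\Sigma}_T\hookrightarrow\widetilde{\mathcal{M}}_T$ is a closed subvariety whose total space is connected and restricts to the divisor $\widetilde{\Sigma}_t$ in each fibre, its cohomology class is the image of a global class under the restriction maps $\oH^2(\widetilde{\mathcal{M}}_T,\ZZ)\to\oH^2(\widetilde{\mathcal{M}}_t,\ZZ)$, and classes pulled back from the total space are automatically flat for the Gauss--Manin connection. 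The subtlety to check is that the fibres $\widetilde{\Sigma}_t$ really are the correct exceptional divisors in a \emph{locally constant} way; this is exactly guaranteed by the compatibility statement that the blow-up of the family equals the family of the blow-ups (\cite[Proposition~2.22]{PeregoRapagnetta:DeformationsOfO'Grady}) together with the flatness assertions established above, so no genuine gap remains once these are invoked.
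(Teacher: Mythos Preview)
Your argument is correct and follows exactly the same line as the paper's proof: the relative exceptional divisor $\widetilde{\Sigma}_T$ is flat over $T$, hence its cohomology class defines a flat section of $R^2p_*\ZZ$, and is therefore preserved by any parallel transport. Your write-up simply makes explicit the justifications (compatibility of blow-up with base change, flatness of $(\Sigma_T)_{\mathrm{red}}$) that the paper leaves to the preceding discussion.
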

\begin{proof}
This is clear from the discussion above. In fact, on $\widetilde{\mathcal{M}}_T$ there is the 
relative exceptional divisor $\widetilde{\Sigma}_T$ which is flat over $T$. The associated class in 
cohomology is then flat in the local system $R^2p_*\ZZ$ and hence preserved by any parallel transport
in the same local system.
\end{proof}
Next, we want to understand what is the orbit of the divisor $\widetilde{B}_S$ under monodromy 
operators arising from this kind of family. This is more subtle, because the locus 
$\mathcal{B}_T:=\mathcal{M}_T\setminus\mathcal{M}_T^{\operatorname{lf}}$ does not have a modular description as in 
Remark~\ref{rmk:modular description of Sigma}. Here and in the following 
$\mathcal{M}_T^{\operatorname{lf}}\subset\mathcal{M}_T$ is
the open subset parametrising locally free sheaves on the fibres of $\mathcal{S}_T\to T$.
We need to work with the Uhlenbeck compactification $\overline{\mathcal{N}}_\infty$ 
of the Donaldson--Yau moduli space $\mathcal{N}_\infty$ of anti-self-dual connections on 
the principal bundle of rank $2$, trivial determinant and second Chern class of degree $4$ on the differentiable manifold underlying $S$ (\cite{FriedmanMorgan:SmoothFourManifolds}). 
Recall that
$\overline{\mathcal{N}}_\infty$ exists as a (reduced) projective scheme and there is
a regular morphism of schemes
$$\phi\colon M_S\longrightarrow\overline{\mathcal{N}}_\infty.$$
Moreover, 
$\overline{\mathcal{N}}_\infty=\mathcal{N}_\infty\coprod S^{(4)}$,
where $S^{(4)}$ stands for the fourth symmetric product of $S$ (see discussion after \cite[Proposition~3.1.1]{O'Grady:10dimPublished}). 
The morphism $\phi$ restricts to an isomorphism 
$M^{\operatorname{lf}}_S\cong\mathcal{N}_\infty$ (\cite{Li:AlgebroGeometric}).

We want to relativise this construction to the family $p\colon\mathcal{M}_T\to T$. 
For this, we need to run the same arguments as in 
\cite[Section~1, Section~2]{Li:AlgebroGeometric} in families.

Let $\operatorname{Quot}_{\mathcal{S}/T}$ be a Quot scheme of sheaves on the fibres of 
$\mathcal{S}_T\to T$, whose open subset $Q_T\subset\operatorname{Quot}_{\mathcal{S}/T}$ parametrising semistable points with respect to the action of the algebraic 
group $G=\operatorname{PGL}(N)$ (for a suitable integer $N$) has $\mathcal{M}_T$ as GIT quotient. On $\mathcal{S}_T\times Q_T$ there is a universal quotient sheaf
$F_T$, flat over $T$ (cf.\ \cite[Theorem~2.2.4]{HuybrechtsLehn:ModuliSpaces}). 

Now let $k\geq1$ and $D_T\in|k\cH_T|$ be a divisor which is smooth over $T$. 
Notice that such a divisor $D_T$ always exists, up to shrinking the base $T$.
Since the fibres of $D_T$ over
$T$ are smooth algebraic curves, we can consider the relative Jacobian $J^{g(D_T)-1}(D_T)$.
Here $g(D_T)$ means the genus of the general fibre of $D_T$ over $T$. 
Let $\theta_{D_T}\in J^{g(D_T)-1}(D_T)$ be flat over $T$.
Then we define the line bundle
\begin{equation}
\widetilde{\mathcal{L}}_k(D_T,\theta_{D_T}):=\det\left(R^\bullet q_{1*}(F_T|_{D_T}\otimes q_2^*\theta_{D_T})\right)^{-1}
\end{equation}
where $q_i$ is the projection from $Q_T\times D_T$ to the $i$-th factor and $F_T|_{D_T}$ is the restriction of $F_T$ to
$Q_T\times D_T$.
Notice that by construction $\widetilde{\mathcal{L}}_k(D_T,\theta_{D_T})$ is flat over $T$.

\begin{lemma}
$\widetilde{\mathcal{L}}_k(D_T,\theta_{D_T})$ descends to a line bundle $\mathcal{L}_k(D_T,\theta_{D_T})$
on $\mathcal{M}_T$.
\end{lemma}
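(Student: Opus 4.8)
The plan is to show that the line bundle $\widetilde{\cL}_k(D_T,\theta_{D_T})$ on $Q_T$ descends to $\cM_T$ by verifying that its natural $G$-linearisation is \emph{trivial on the stabilisers} of the GIT quotient, which by Kempf's descent lemma (descent for GIT quotients) is precisely the condition for descent to the good quotient $\cM_T = Q_T\sslash G$. First I would recall that $\cM_T$ is constructed as the GIT quotient of the semistable locus $Q_T\subset\operatorname{Quot}_{\cS/T}$ by $G=\PGL(N)$, and that by Kempf's criterion a $G$-linearised line bundle on $Q_T$ descends to the quotient if and only if for every closed point with closed orbit, the stabiliser acts trivially on the fibre. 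So the whole problem reduces to a stabiliser computation.

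Next I would set up the $G$-linearisation on $\widetilde{\cL}_k(D_T,\theta_{D_T})$. Since $F_T$ is the universal quotient sheaf on $\cS_T\times Q_T$, it carries a canonical $\GL(N)$-linearisation coming from the universal property of the Quot scheme, and hence so does the restriction $F_T|_{D_T}$ and the determinant of cohomology $\det(R^\bullet q_{1*}(F_T|_{D_T}\otimes q_2^*\theta_{D_T}))^{-1}$. The point of working over the fibrewise Jacobian $J^{g(D_T)-1}(D_T)$ and choosing $\theta_{D_T}$ of degree $g(D_T)-1$ is exactly that the virtual Euler characteristic of $F_t|_{D_t}\otimes\theta_{D_t}$ on each fibre is zero (by Riemann--Roch, since $\deg\theta=g-1$ and the relevant rank/degree combination makes $\chi=0$), so that the central scalars $\ZZ/N$ in $\GL(N)$ act trivially on the determinant line. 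This is what makes the $\GL(N)$-linearisation factor through $G=\PGL(N)$ and is the mechanism by which descent becomes possible.

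The key verification is that, for a polystable sheaf $\cF$ with closed orbit, the stabiliser $\Aut(\cF)/\text{scalars}\subset\PGL(N)$ acts trivially on the fibre of $\widetilde{\cL}_k$ at the corresponding point. I would decompose $\cF=\bigoplus \cF_i^{\oplus m_i}$ into stable factors and trace through how $\Aut(\cF)=\prod\GL(m_i)$ acts on $\det(R^\bullet q_{1*}(\cF|_{D_T}\otimes\theta_{D_T}))^{-1}$; because each summand again has vanishing Euler characteristic on $D_t$ (as $\chi$ is additive and the degree condition on $\theta$ forces each factor to contribute $0$), every $\GL(m_i)$ acts through a determinant-to-a-power that is forced to be trivial. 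This is the fibrewise incarnation of Li's argument in \cite{Li:AlgebroGeometric}, now carried out relatively over $T$. The main obstacle I expect is precisely this stabiliser computation in the strictly semistable (singular) locus: one must confirm that the Euler-characteristic-zero condition guaranteed by the choice of $\theta_{D_T}\in J^{g(D_T)-1}$ persists \emph{for every stable constituent} and in families, so that no residual nontrivial character of the stabiliser survives.

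Finally, since all the data ($F_T$, $D_T$, $\theta_{D_T}$, and the resulting linearisation) are flat over $T$ and the descent criterion is checked fibrewise on the closed orbits, the descended line bundle $\cL_k(D_T,\theta_{D_T})$ exists on $\cM_T$ and restricts fibrewise to the line bundles of \cite{Li:AlgebroGeometric} on each $\cM_t=M_{\cS_t}$; this fibrewise compatibility identifies the relative construction with the absolute one and completes the proof. The technical core is thus reducing to Kempf descent and then verifying the stabiliser triviality via the numerical condition $\deg\theta_{D_T}=g(D_T)-1$, everything else being formal and following \cite{Li:AlgebroGeometric} in families.
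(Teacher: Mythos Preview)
Your proposal is correct and follows essentially the same approach as the paper: both invoke Kempf's descent criterion (the paper cites it as \cite[Lemma~1.6]{Li:AlgebroGeometric}), note that the $\PGL(N)$-linearisation exists because the choice $\theta_{D_T}\in J^{g(D_T)-1}(D_T)$ forces the Euler characteristic to vanish, and then reduce the stabiliser check at closed orbits to Li's fibrewise argument \cite[Proposition~1.7]{Li:AlgebroGeometric}. Your write-up is in fact more explicit about the polystable decomposition and why each stable constituent also has $\chi=0$, which the paper leaves implicit in the reference to Li.
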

\begin{proof}
Since $\mathcal{M}_T$ is constructed as a $G$-quotient from $Q_T$,  
Kempf's criterion (\cite[Proposition~2.3]{DrezetNarasimhan:GroupDePicard}) says that a $G$-bundle $E$ on $Q_T$
descends to $\mathcal{M}_T$ if and only if for every closed point $x\in Q_T$ with closed orbit, the 
stabiliser $G_x$ acts trivially on $E_x$. First we show that $\widetilde{\mathcal{L}}_k(D_T,\theta_{D_T})$ is 
a $G$-bundle. As in the first part of the proof of \cite[Proposition~1.7]{Li:AlgebroGeometric}, we only need to show that the subgroup $\CC^*\subset\operatorname{GL}(N)$ acts trivially on $\widetilde{\mathcal{L}}_k(D_T,\theta_{D_T})$. If $\alpha\in\CC^*$, the homothety $\alpha.\id$ acts fibrewise on $F_T$. On the other hand, our choice of $D_T$ is such that the Euler characteristic of the fibres over $T$ is zero, so that $\alpha.\id$ induces the identity on $\widetilde{\mathcal{L}}_k(D_T,\theta_{D_T})$.

Now, closed points in $Q_T$ are all of the form
$i_{t*}F_t$, where $i_t\colon\mathcal{S}_t\to\mathcal{S}_T$ is the inclusion and $F_t\in\mathcal{M}_t$. 
Moreover, since $\cH_t$ is assumed to be generic for every $t\in T$, $Q_t$ satisfies the hypotheses 
of \cite[Proposition~1.7]{Li:AlgebroGeometric} and therefore the proof is reduced to the proof of
\cite[Proposition~1.7]{Li:AlgebroGeometric}.
\end{proof}

With an abuse of notation, we denote by $\mathcal{L}_k$ the line bundle $\mathcal{L}_k(D_T,\theta_{D_T})$.
\begin{prop}
Let $(\mathcal{S}_T,\cH_T)$ be a polarised family of $K3$ surfaces over a curve $T$. 
Let $\mathcal{L}_k=\mathcal{L}_k(D_T,\theta_{D_T})$ be the line bundle on $\mathcal{M}_T$ constructed 
above and suppose $k\geq9$. 
Then there exists a positive integer $\bar{m}$ such that $(\mathcal{L}_k^m)_t$ is generated by
global sections for every $t\in T$ and for every $m\geq\bar{m}$.
\end{prop}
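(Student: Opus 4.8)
The plan is to derive the statement from Li's fibrewise analysis \cite{Li:AlgebroGeometric} and then to promote the resulting global generation to a bound uniform along the curve $T$. First I would record the fibrewise picture. For $k>5$ the restriction $(\mathcal{L}_k)_t=\mathcal{L}_k(D_t,\theta_{D_t})$ on the single moduli space $\mathcal{M}_t=M_{\mathcal{S}_t}$ is the pullback $\phi_t^{*}\mathcal{A}_t$ of an ample line bundle $\mathcal{A}_t$ on the Uhlenbeck compactification $\overline{\mathcal{N}}_\infty(\mathcal{S}_t)$, under the regular morphism $\phi_t$ of \cite{Li:AlgebroGeometric}. In particular $(\mathcal{L}_k)_t$ is semiample, and since a power of an ample bundle is globally generated for \emph{every} sufficiently large exponent, $(\mathcal{L}_k^{m})_t=\phi_t^{*}\mathcal{A}_t^{m}$ is generated by global sections for all $m\geq m_t$. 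The whole assertion is therefore equivalent to the single inequality $\sup_{t\in T}m_t<\infty$, and establishing this uniformity is the step I expect to be the main obstacle.

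My main route to uniformity is to observe that the threshold $m_t$ is controlled by numerical data only. The determinant line bundle $\mathcal{L}_k$, the morphism $\phi_t$ and the polarisation $\mathcal{A}_t$ on $\overline{\mathcal{N}}_\infty(\mathcal{S}_t)$ are manufactured out of the Mukai vector $v=(2,0,-2)$, the self-intersection $\cH_t^{2}$ and the integer $k$; along the family $(\mathcal{S}_T,\cH_T)$ these invariants are locally constant, hence constant on the connected base $T$. Consequently the Hilbert polynomials governing Serre's global generation threshold for $\mathcal{A}_t^{m}$ are independent of $t$, so $m_t$ can be chosen independent of $t$, and this common value $\bar m$ answers the question.

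As a structural cross-check, and to make the argument robust even without tracking Li's constants, I would run a semicontinuity argument on the base. For each $m$ set $\mathcal{C}_m:=\mathrm{coker}\bigl(p^{*}p_{*}\mathcal{L}_k^{m}\to\mathcal{L}_k^{m}\bigr)$; its support is closed in $\mathcal{M}_T$, so by properness of $p$ the set $V_m:=T\setminus p(\mathrm{Supp}\,\mathcal{C}_m)$ is open. Over $t\in V_m$ the relative evaluation is surjective along $\mathcal{M}_t$, and since tensoring a surjection with the residue field $k(t)$ stays surjective, the sections in the image of $(p_{*}\mathcal{L}_k^{m})\otimes k(t)$ generate $(\mathcal{L}_k^{m})_t$ everywhere; thus $V_m$ lies inside the fibrewise global generation locus, with no base-change hypothesis needed. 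By Grauert's theorem the sets $V_m$ fill up the dense open subset of $T$ where $h^0(\mathcal{M}_t,(\mathcal{L}_k^{m})_t)$ is locally constant, and since fibrewise global generation persists for all larger exponents this already yields a uniform $\bar m$ over a dense open $T^{\circ}\subseteq T$.

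The delicate point, and the genuine obstacle, is the finite set $T\setminus T^{\circ}$ of special fibres: these are the points where $h^0$ may jump, where base change can fail, and where the underlying surface $\mathcal{S}_t$ or its polarisation are least generic. It is precisely here that the semicontinuity argument stalls and that one must fall back on the numerical constancy of the previous paragraph — which is insensitive to the special position of the fibre — to conclude that $m_t$ does not blow up there either. Alternatively one could relativise the Uhlenbeck compactification to a scheme $\overline{\mathcal{N}}_{\infty,T}$ proper over $T$ with $\mathcal{L}_k$ equal to the pullback of a relatively ample bundle, and invoke relative Serre vanishing; but that presupposes the relative morphism which the present global generation statement is designed to produce.
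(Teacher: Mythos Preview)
Your approach coincides with the paper's: invoke Li's fibrewise theorem \cite[Theorem~3]{Li:AlgebroGeometric} to get an $m_t$ for each $t\in T$, then argue that $\bar m=\sup_{t\in T}m_t$ is finite. The paper's entire proof consists of exactly these two sentences, concluding $\bar m<\infty$ directly from the quasi-compactness of $T$.

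Your write-up is therefore much more careful than the paper's at the uniformity step. The quasi-compactness invocation in the paper implicitly presupposes that the locus of $t$ where $(\mathcal L_k^m)_t$ is globally generated is open in $T$ (so that the $V_m$ you define form an open cover, and a finite subcover gives the bound); you spell this out via the cokernel-support argument, and you also offer the numerical-constancy route as a backup. Your discussion of special fibres where base change might fail is a concern the paper does not raise. In short: same strategy, but you supply the openness/semicontinuity justification that the paper's one-line appeal to quasi-compactness leaves tacit.
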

\begin{proof}
For any $t\in T$, there exists a positive integer $m_t$ such that
$(\mathcal{L}_{k}^{m_t})_t$ is generated by global sections for every $m\geq m_t$ and $k\geq9$
(\cite[Theorem~3]{Li:AlgebroGeometric}).
Now, define 
$$ \bar{m}:=\sup_{t\in T}\{m_t\} $$
and since $T$ is quasi-compact (in the Zariski topology), $\bar{m}<\infty$.
\end{proof}
The pushforward $p_*\mathcal{L}_k^{\bar{m}}$ is not locally free in general, but its double dual 
$p_*(\mathcal{L}_k^{\bar{m}})^{\vee\vee}$ is always locally free 
(\cite[Corollary~1.4]{Hartshorne:ReflexiveSheaves}). The proposition above says then that the induced
map
\begin{equation}\label{eqn:Li in families}
\varphi_T\colon\mathcal{M}_T\longrightarrow\PP\left(p_*(\mathcal{L}_k^{\bar{m}})^{\vee\vee}\right)
\end{equation}
is a regular morphism of schemes. Notice that $\PP\left(p_*(\mathcal{L}_k^{\bar{m}})^{\vee\vee}\right)$ 
is flat over $T$ (it is a projective bundle) and that $\varphi_T$ is defined fibrewise.

Let us define $\overline{\mathcal{N}}_T$ as the image of $\mathcal{M}_T$ via $\varphi_T$. By construction 
(or by \cite[Proposition~II.2.19]{EisenbudHarris:GeometryOfSchemes}) $\overline{\mathcal{N}}_T$ is flat over $T$
and, for every $t\in T$, $\overline{\mathcal{N}}_t$ is the Donaldson--Uhlenbeck--Yau moduli space associated to the
$K3$ surface $\mathcal{S}_t$ (\cite[Theorem~5]{Li:AlgebroGeometric}). The natural projection
\begin{equation}\label{eqn:family of DUY}
\overline{\mathcal{N}}_T\longrightarrow T
\end{equation}
is then a family of Donaldson--Uhlenbeck--Yau moduli spaces.
If we put $\mathcal{N}_T=\varphi_T(\mathcal{M}_T^{\operatorname{lf}})$, then
we get a relative Uhlenbeck decomposition
$$\overline{\mathcal{N}}_T=\mathcal{N}_T\coprod\mathcal{S}_T^{(4)}$$
where $\mathcal{S}_T^{(4)}$ is the relative symmetric product, i.e.\ 
$(\mathcal{S}_T^{(4)})_t=\mathcal{S}_t^{(4)}$.
\begin{rmk}\label{rmk:symmetric power is flat}
Notice that $\mathcal{S}_T^{(4)}$ is flat over $T$.
\end{rmk}
\begin{rmk}
The construction above is not canonical: it depends on the choice of both $D_T$ and $\theta_{D_T}$,
so one should really write $\varphi_{T,D_T,\theta_{D_T}}$. 
Nevertheless, we suppress this dependence from the notation for the sake of clarity.

Anyway, when $T=\operatorname{Spec}(\CC)$ is a point, 
Li noticed that $\mathcal{L}_k(D_T,\theta_{D_T})$ 
does not depend on $D_T$ and $\theta_{D_T}$. In particular, for a general base $T$, the claim is true fibrewise and so, if $D'_T$ is another smooth divisor
on $\mathcal{M}_T$ and $\theta_{D'_T}\in J^{g(D'_T)-1}(D'_T)$, then 
$$\mathcal{L}_k(D_T,\theta_{D_T})\cong\mathcal{L}_k(D'_T,\theta_{D'_T})\otimes p^*A $$
where $A$ is a line bundle on $T$. 
\end{rmk}

\begin{prop}\label{prop:B is flat}
$\mathcal{B}_T=\mathcal{M}_T\setminus\mathcal{M}_T^{\operatorname{lf}}$ is flat over $T$.
\end{prop}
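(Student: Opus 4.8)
The plan is to deduce flatness from the criterion over a smooth curve that has already been invoked twice in this section: by \cite[Proposition~II.2.19]{EisenbudHarris:GeometryOfSchemes}, a finite-type scheme over the smooth curve $T$ is flat if and only if each of its associated points dominates $T$. Since $\mathcal{M}_T\to T$ is projective and $\mathcal{B}_T\subset\mathcal{M}_T$ is closed, the morphism $\mathcal{B}_T\to T$ is projective, hence proper; and I would give $\mathcal{B}_T$ its reduced structure, consistently with the fact recalled in Example~\ref{example:O'Grady moduli space} that each fibre is a reduced (non-Cartier) Weil divisor. For a reduced scheme the associated points are exactly the generic points of the irreducible components, so the criterion reduces the statement to the purely topological assertion that no irreducible component of $\mathcal{B}_T$ is contained in a single fibre of $p$.

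First I would record the fibrewise picture. For every $t\in T$ the polarisation $\mathcal{H}_t$ is generic by Lemma~\ref{lemma:def O'Grady on curves}, so the set-theoretic fibre of $\mathcal{B}_T\to T$ over $t$ is the non-locally-free locus $B_{\mathcal{S}_t}\subset M_{\mathcal{S}_t}$, which by O'Grady's description (Example~\ref{example:O'Grady moduli space}) is an irreducible divisor, hence irreducible of pure dimension $9$. In particular every fibre is nonempty and equidimensional, so $\mathcal{B}_T\to T$ is surjective and the generic fibre $\mathcal{B}_\eta$ is irreducible of dimension $9$; its closure $W\subset\mathcal{B}_T$ is then the unique irreducible component that dominates $T$, and $\dim W=10$.

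It then remains to see that $\mathcal{B}_T=W$. Since $\mathcal{B}_T\to T$ is proper and $W$ dominates $T$, the image of $W$ is closed and dense, so $W\to T$ is surjective; for each closed point $t_0\in T$ the fibre $W_{t_0}$ is nonempty of dimension at least $\dim W-\dim T=9$ and is contained in $B_{\mathcal{S}_{t_0}}$. As the latter is irreducible of dimension $9$, this forces $W_{t_0}=B_{\mathcal{S}_{t_0}}$ set-theoretically. Hence any irreducible component $Z$ of $\mathcal{B}_T$ contained in a single fibre $B_{\mathcal{S}_{t_0}}$ would satisfy $Z\subseteq B_{\mathcal{S}_{t_0}}=W_{t_0}\subseteq W$, contradicting maximality unless $Z=W$, which dominates $T$. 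Therefore no component lies in a fibre, every associated point of $\mathcal{B}_T$ dominates $T$, and flatness follows.

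I expect the main obstacle to be the control of the fibres over the finitely many non-generic $t\in T$: the whole argument rests on the uniform statement that $B_{\mathcal{S}_t}$ remains irreducible of dimension $9$ for \emph{every} $t$, which is exactly why one must keep $\mathcal{H}_t$ generic throughout (Lemma~\ref{lemma:def O'Grady on curves}); without this equidimensionality an extra component, or an embedded point, could concentrate over a single closed point and break flatness. The same point explains why I fix the reduced structure on $\mathcal{B}_T$: were embedded points present over a closed point of $T$, they would be associated points failing to dominate $T$ and would have to be excluded separately, for instance by comparing fibre dimensions with the image $\varphi_T(\mathcal{B}_T)=\mathcal{S}_T^{(4)}$, which is flat by Remark~\ref{rmk:symmetric power is flat}.
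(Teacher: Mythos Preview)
Your argument is correct and takes a genuinely different route from the paper. Both approaches invoke the same flatness criterion \cite[Proposition~II.2.19]{EisenbudHarris:GeometryOfSchemes}, but the paper does not argue by dimension: instead it uses the relative Uhlenbeck morphism $\varphi_T\colon\mathcal{B}_T\to\mathcal{S}_T^{(4)}$ (whose construction occupies the preceding page) and rules out an associated point of $\mathcal{B}_T$ concentrated in a fibre by observing that its image would be concentrated in a fibre of $\mathcal{S}_T^{(4)}$, contradicting the flatness recorded in Remark~\ref{rmk:symmetric power is flat}. Your approach bypasses the Uhlenbeck machinery entirely by fixing the reduced structure and using the irreducibility and fixed dimension of each $B_{\mathcal{S}_t}$ (a fact from O'Grady's work, implicit in Example~\ref{example:O'Grady moduli space}) to force $\mathcal{B}_T$ itself to be irreducible. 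This is shorter and more elementary; the price is the extra input that every fibre is irreducible, which the paper's argument does not need. One small point: your assertion that the generic fibre $\mathcal{B}_\eta$ is irreducible is not immediate from irreducibility of the closed fibres, but it follows from the rest of your argument (two distinct dominating components would have the same closed fibres and hence coincide), so the reasoning is sound once reordered slightly.
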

\begin{proof}
Consider the surjective morphism
$$\varphi_T\colon\mathcal{B}_T\longrightarrow\mathcal{S}_T^{(4)}$$
obtained by restricting the morphism (\ref{eqn:Li in families}). 
By \cite[Proposition~II.2.19]{EisenbudHarris:GeometryOfSchemes}, it is enough to show that there are
no embedded components of $\mathcal{B}_T$ supported on a fibre $\mathcal{B}_t$, for every $t\in T$.
Suppose such a component $\overline{\mathcal{B}}_T\subset\mathcal{B}_T$ exists and is supported
on the fibre $\mathcal{B}_{t_0}$. 
Since $\varphi_T$ is defined fibrewise, 
$\varphi_T(\overline{\mathcal{B}}_T)=\varphi_{t_0}(\overline{\mathcal{B}}_T)\subset\mathcal{S}_{t_0}^{(4)}$. Since $\mathcal{S}_T^{(4)}$ is flat over $T$ (Remark~\ref{rmk:symmetric power is flat}), 
$\varphi_{t_0}(\overline{\mathcal{B}}_T)$ cannot be an embedded component of 
$\mathcal{S}_T^{(4)}$. 

We conclude that such an embedded component $\overline{\mathcal{B}}_T$ cannot exist and that
$\mathcal{B}_T$ is flat over $T$.
\end{proof}
\begin{lemma}\label{lemma:monodromy preserves B}
Let $\widetilde{M}_S$ be the O'Grady desingularisation of $M_S$, $B_S\subset M_S$ the
divisor of non-locally free sheaves and $\widetilde{B}_S$ its strict transform. 
Any monodromy operator $g$ arising from a deformation family
(\ref{eqn:def of OG10 from K3}) must satisfy the equality
$g(\widetilde{B}_S)=\widetilde{B}_S$.
\end{lemma}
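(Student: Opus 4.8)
The plan is to mimic the structure of the proof of Lemma~\ref{lemma:monodromy preserves Sigma}: once we know that the locus $\mathcal{B}_T$ carries a well-defined cohomology class that is flat in the local system $R^2p_*\ZZ$, any parallel transport operator arising from the family (\ref{eqn:def of OG10 from K3}) must fix that class, and hence fix $\widetilde{B}_S$. The key extra ingredient that makes this argument work is precisely Proposition~\ref{prop:B is flat}, which guarantees that $\mathcal{B}_T$ is flat over $T$. Flatness is what allows us to speak of a relative divisor whose fibrewise classes glue into a flat section of the local system, rather than a family whose fibres could jump in cohomology.

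First I would pass to the blow-up family $\widetilde{\mathcal{M}}_T\to T$ and consider the strict transform $\widetilde{\mathcal{B}}_T$ of $\mathcal{B}_T$ under the blow-up at $(\Sigma_T)_{\operatorname{red}}$. Since $\mathcal{B}_T$ is flat over $T$ by Proposition~\ref{prop:B is flat}, and since the blow-up of the family is the family of the blow-ups (by \cite[Proposition~2.22]{PeregoRapagnetta:DeformationsOfO'Grady}, as already used to produce (\ref{eqn:def of OG10 from K3})), the strict transform $\widetilde{\mathcal{B}}_T$ is a relative divisor on $\widetilde{\mathcal{M}}_T$ whose fibre over $t$ is exactly $\widetilde{B}_{\mathcal{S}_t}$. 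In particular $\widetilde{\mathcal{B}}_0=\widetilde{B}_S$. The point is that flatness of $\mathcal{B}_T$ ensures $\widetilde{\mathcal{B}}_T$ meets every fibre in the expected codimension-one locus with no jumping, so its class defines a section of $R^2p_*\ZZ$.

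Next I would argue that the associated cohomology class $[\widetilde{\mathcal{B}}_T]$ is a flat (horizontal) section of the local system $R^2p_*\ZZ$ with the Gauss--Manin connection. This is the same mechanism invoked in Lemma~\ref{lemma:monodromy preserves Sigma}: a divisor that exists relatively over the whole base, restricting fibrewise to the class in question, has cohomology class invariant under the Gauss--Manin parallel transport. A monodromy operator $g$ coming from the family (\ref{eqn:def of OG10 from K3}) is, by Definition~\ref{defn:parallel transport operators}, exactly such a parallel transport along a loop, so it fixes every flat section; applying this to the class $[\widetilde{B}_S]$ yields $g(\widetilde{B}_S)=\widetilde{B}_S$.

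The main obstacle, and the reason this lemma required the preceding technical build-up, is establishing that the relative class is genuinely flat rather than merely fibrewise defined. Unlike $\Sigma_T$, the locus $\mathcal{B}_T$ has no modular description (as noted before Remark~\ref{rmk:modular description of Sigma}), so one cannot read off flatness directly; this is exactly what Proposition~\ref{prop:B is flat} supplies via the relative Donaldson--Uhlenbeck--Yau construction $\overline{\mathcal{N}}_T$ and the fibrewise morphism $\varphi_T$ of (\ref{eqn:Li in families}). Once flatness of $\mathcal{B}_T$ is in hand, the transition to the strict transform and to the horizontality of the class is formal, and the conclusion $g(\widetilde{B}_S)=\widetilde{B}_S$ follows exactly as in the proof of Lemma~\ref{lemma:monodromy preserves Sigma}.
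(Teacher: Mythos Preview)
Your proposal is correct and follows exactly the paper's approach: the paper's proof is the one-line statement that the result follows directly from Proposition~\ref{prop:B is flat} by the same argument as in Lemma~\ref{lemma:monodromy preserves Sigma}, and your write-up is simply an expanded version of that reasoning.
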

\begin{proof}
It follows directly from Proposition~\ref{prop:B is flat} as in the proof of 
Lemma~\ref{lemma:monodromy preserves Sigma}.
\end{proof}

The main result of this section is the following proposition.
\begin{thm}\label{thm:my operators O'Grady}
Let $g\in\Or^+(\oH^2(\widetilde{M}_S,\ZZ))$ be such that $g(\widetilde{\Sigma}_S)=\widetilde{\Sigma}_S$, $g(\widetilde{B}_S)=\widetilde{B}_S$ and $g(H)=H$. Then $g$ is a monodromy operator.
\end{thm}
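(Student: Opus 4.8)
The plan is to realise $g$ as the monodromy operator induced by a suitable polarised deformation of the $K3$ surface $S$, using the family~(\ref{eqn:def of OG10 from K3}) together with Lemmas~\ref{lemma:monodromy preserves Sigma} and~\ref{lemma:monodromy preserves B} to control the action on $G_2(-1)$. First I would reduce the statement to a purely $K3$-theoretic one. Since $g(\widetilde{\Sigma}_S)=\widetilde{\Sigma}_S$ and $g(\widetilde{B}_S)=\widetilde{B}_S$, the isometry $g$ fixes both generators of $G_2(-1)=\langle\widetilde{B}_S,\widetilde{\Sigma}_S\rangle$ and therefore acts as the identity on this sublattice. As the decomposition $\oH^2(\widetilde{M}_S,\ZZ)=\oH^2(S,\ZZ)\oplus G_2(-1)$ is orthogonal, $g$ preserves $\oH^2(S,\ZZ)=G_2(-1)^{\perp}$ and restricts there to an isometry $\bar g$, so that $g=\bar g\oplus\id_{G_2(-1)}$. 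Since $G_2(-1)$ is negative definite, the positive three-space of $\oH^2(\widetilde{M}_S,\RR)$ lies entirely in $\oH^2(S,\RR)$, where $g$ and $\bar g$ agree; hence $g\in\Or^+$ forces $\bar g\in\Or^+(\oH^2(S,\ZZ))$, and by hypothesis $\bar g(H)=H$.

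Next I would produce a polarised family realising $\bar g$. Because $\oH^2(S,\ZZ)$ is unimodular and $H$ is primitive, an isometry fixing $H$ automatically restricts to an element of $\Ort^+(\langle H\rangle^{\perp})$ on the primitive lattice: the gluing isomorphism $A_{\langle H\rangle}\cong A_{\langle H\rangle^{\perp}}$ together with $\bar g|_{\langle H\rangle}=\id$ leaves no freedom on the discriminant, while the orientation on the positive two-space of $\langle H\rangle^{\perp}$ is preserved since $H$ is fixed. But $\Ort^+(\langle H\rangle^{\perp})$ is exactly the monodromy group of primitively polarised $K3$ surfaces of degree $H^2$, the corresponding moduli space being connected with period map an open immersion into $\mathcal D/\Ort^+(\langle H\rangle^{\perp})$. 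Hence there is a pointed curve $(T,0)$, a polarised family $(\mathcal{S}_T,\cH_T)$ with $(\mathcal{S}_0,\cH_0)=(S,H)$, and a loop realising $\bar g$; choosing this loop inside the $v$-generic locus (dense by Lemma~\ref{lemma:def O'Grady on curves}) we may assume $\cH_t$ is ample and generic for all $t$, so that the construction~(\ref{eqn:def of OG10 from K3}) applies.

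Finally I would identify the resulting operator with $g$. Applying~(\ref{eqn:def of OG10 from K3}) to $(\mathcal{S}_T,\cH_T)$ yields $p\colon\widetilde{\mathcal M}_T\to T$ with $\widetilde{\mathcal M}_0=\widetilde{M}_S$, and parallel transport along the loop defines an operator $\tilde g\in\Mon^2(\widetilde{M}_S)$. By Lemmas~\ref{lemma:monodromy preserves Sigma} and~\ref{lemma:monodromy preserves B} the operator $\tilde g$ fixes $\widetilde{\Sigma}_S$ and $\widetilde{B}_S$, hence is the identity on $G_2(-1)$; on $\oH^2(S,\ZZ)$ it must agree with $\bar g$, because the embedding $\oH^2(S,\ZZ)\hookrightarrow\oH^2(\widetilde{M}_S,\ZZ)$ underlies a morphism of local systems $R^2p'_*\ZZ\to R^2p_*\ZZ$ (with $p'\colon\mathcal{S}_T\to T$) obtained by relativising the Hodge isometry $v^{\perp}\cong\oH^2(M_S)$ and the pullback $\pi^*$ of Theorem~\ref{thm:big OG10}, and parallel transport is compatible with morphisms of local systems. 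Thus $\tilde g=\bar g\oplus\id_{G_2(-1)}=g$, so $g\in\Mon^2(\widetilde{M}_S)$. I expect the main difficulty to be precisely this last compatibility: one must verify that $\oH^2(S,\ZZ)$ is a \emph{flat} sub-local-system of $R^2p_*\ZZ$, i.e.\ that the O'Grady/Perego--Rapagnetta isometry of Theorem~\ref{thm:big OG10} deforms along the family, which ultimately rests on the functoriality of the moduli construction and the existence of a relative (quasi-)universal sheaf over the stable locus.
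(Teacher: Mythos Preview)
Your proposal is correct and follows essentially the same route as the paper: restrict $g$ to $\oH^2(S,\ZZ)$ to obtain a polarised monodromy operator $\bar g$, realise $\bar g$ via a polarised family of $K3$ surfaces over a curve, pass to the associated family~(\ref{eqn:def of OG10 from K3}) of O'Grady spaces, and use Lemmas~\ref{lemma:monodromy preserves Sigma} and~\ref{lemma:monodromy preserves B} to see that the lifted operator agrees with $g$ on $G_2(-1)$. The paper is slightly more explicit than you on two points: it cites Picard--Lefschetz theory (via \cite[Chapter~7, Proposition~5.5]{Huybrechts:K3Surfaces}) to justify that the base $T$ can be taken to be a curve, and it invokes the surjectivity $\pi_1(T',0)\twoheadrightarrow\pi_1(T,0)$ (Godbillon) rather than just ``density'' to move the loop into the generic locus---your phrasing there could be tightened. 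Conversely, you are more careful than the paper about the compatibility $\tilde g|_{\oH^2(S,\ZZ)}=\bar g$, which the paper absorbs into the phrase ``by construction''; your formulation via a morphism of local systems is the right way to make this precise.
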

\begin{proof}
Let $g$ be as in the statement. In particular $g\in\Or^+(\oH^2(\widetilde{M}_S,\ZZ),G_2(-1))$ and so
its image $r(g)$ under the restriction map (\ref{eqn:r M_S}) is a monodromy operator on $S$ that preserves the polarisation of $S$ (i.e.\ it is a polarised monodromy operator -- cf.\ \cite[Corollary~7.4]{Markman:Survey}). 
This means that there exists a family of deformations $\mathcal{S}_T\to T$ such that $r(g)$ is obtained
by parallel transport along a loop $\gamma$ in $T$ centred in a point $0\in T$ corresponding
to $S$, and a family $\{\cH_t\}_{t\in T}$ of ample line bundles such that $\cH_0=H$.
By the proof of 
\cite[Proposition~5.5 in Chapter~7]{Huybrechts:K3Surfaces}, it follows that $T$ can be taken to be a curve (in fact, $T$ is a general pencil). Notice that in the proof of \cite[Proposition~5.5 in Chapter~7]{Huybrechts:K3Surfaces} the $K3$ is taken non projective, but exactly the same argument applies for projective surfaces.

The number of points $t\in T$ where $\cH_t$ is not generic is finite 
(cf.\ Lemma~\ref{lemma:def O'Grady on curves}). Let us denote by $T'$ the complement in $T$ of
these points. By \cite[Th{\'e}or{\`e}me~2.3 in Chapter~X]{Godbillon:AlgebraicTopology}, the induced map
$$ \pi_1(T',0)\longrightarrow\pi_1(T,0) $$
is surjective and so we can assume that $[\gamma]\in\pi_1(T',0)$.

By construction the parallel transport along $\gamma$ in the family
$\widetilde{\mathcal{M}}_{T'}\to T'$ is an isometry $g'$ such that $r(g')=r(g)$ and moreover,
by Lemma~\ref{lemma:monodromy preserves Sigma} and 
Lemma~\ref{lemma:monodromy preserves B}, $g'(\widetilde{\Sigma}_S)=\widetilde{\Sigma}_S$ 
and $g'(\widetilde{B}_S)=\widetilde{B}_S$. Therefore $g=g'$.
\end{proof}
\begin{rmk}
Since the family $\mathcal{M}_T\to T$ is locally trivial (cf.\ Section~\ref{section:Singular}), Proposition~\ref{prop:B is flat} implies that any (locally trivial) monodromy operator arising from this family must preserve the divisor $2B$ (notice that, as recalled in Example~\ref{example:O'Grady moduli space}, $B$ is not Cartier, while $2B$ is, and this property is preserved in this family). Moreover, running the same proof of Theorem~\ref{thm:my operators O'Grady} in this situation yields the analogous statement
$$\Mon^2(S)_{H}=\Or^+(\oH^2(S,\ZZ))_{H}\subset\Mon^2(M_S)_{\operatorname{lt}}.$$
\end{rmk}

\section{Exceptional reflections from singular moduli spaces}\label{section:exceptional reflections}
In this section we show the existence of monodromy reflections coming from exceptional
divisors on the singular moduli space $M_v(S)$.

First of all, let us explain what we mean by exceptional reflection. If $L$ is a lattice and $x\in L$ is an element, then the reflection $R_x$ is the rational isometry (i.e.\ an isometry of $L\otimes\QQ$) defined by 
$$ y\mapsto y-2\frac{(x,y)}{x^2}x.$$
If now $L=\oH^2(X,\ZZ)$ and $x$ is the class of a prime exceptional divisor, i.e.\ a reduced and irreducible divisor such that $x^2<0$ (cf.\ \cite[Definition~5.1]{Markman:Survey}), then Markman proved that the reflection $R_x$ is a monodromy operator (\cite[Theorem~1.1]{Markman:PrimeExceptional}. In particular this implies that $R_x$ is integral, imposing strong conditions on the numerical invariants of the divisor class $x$. Geometrically, in the projective case, we remark that prime exceptional divisors are exceptional divisors of divisorial contractions, possibly after a birational isomorphism of the ambient space (\cite[Proposition~1.4]{Druel:QuelquesRemarques}). More precisely, if $D$ is a prime exceptional divisor, then there exists another irreducible holomorphic symplectic manifold $X'$, a birational isomorphism $X\cong X'$ and a divisorial contraction $X'\to Y$, with $Y$ normal and projective, such that the image of $D$ in $X'$ coincides with the exceptional divisor of $X'\to Y$.
\medskip

We will use the notations and results introduced in Section~\ref{subsection:moduli spaces of sheaves}. In particular, $S$ is a projective $K3$ surface, $v=2w$ a Mukai vector such that $w^2=2$, and $H$ a $v$-generic polarisation. Then we consider the moduli space $M_v(S)$ and its irreducible symplectic desingularisation $\pi\colon\widetilde{M}_v(S)\to M_v(S)$ (see Theorem~\ref{thm:big OG10}). 

Let $D$ be a reduced and irreducible Cartier divisor on $M_v(S)$ and, with an abuse of notation, let us keep denoting by $D$ its cohomology class. Assume that $D^2=-2$. 
By \cite[Theorem~5.3]{MeachanZhang:BirationalGeometry}, $D$ arises as the exceptional 
divisor of a divisorial contraction in some birational model 
(which is a moduli space of Bridgeland stable objects on $S$). 
Note that, by \cite[Proposition~2.3]{MeachanZhang:BirationalGeometry} and the Cone Theorem 
\cite[Theorem~3.7]{KollarMori:BirationalGeometry}, $D$ is uniruled.
Let $\widetilde{D}$ be the strict transform of $D$ via the morphism $\pi\colon\widetilde{M}_v(S)\to M_v(S)$.
By \cite{LehnPacienza:MMP}, the minimal model program for the pair $(\widetilde{M}_v(S),\pi^*D)$
terminates, and the termination does not depend on the order of the contracted curves. Since $\widetilde{D}$ is negative and uniruled, being the strict transform of a uniruled divisor, the first step in the minimal model program above contracts all the rational curves in $\widetilde{D}$ obtaining a symplectic variety $Y$ and a divisorial contraction $\widetilde{M}_v(S)\to Y$, where $\widetilde{D}$ is the contracted divisor.
It follows that $\widetilde{D}$ is a prime exceptional divisor, and so the reflection $R_{\widetilde{D}}$ is of monodromy type. 
If $\widetilde{\Sigma}$ is the exceptional divisor of the desingularisation $\pi$, then 
$$\pi^*D=\widetilde{D}+m\widetilde{\Sigma}$$
is still of degree $-2$ (see item (3) of Theorem~\ref{thm:big OG10}). Since the reflection $R_{\widetilde{D}}$ is integral, this forces
$\widetilde{D}^2=-2$ and $m=0$. Let us summarise this remark in the
following result.
\begin{prop}
If $D\in \oH^2(M_v(S),\ZZ)$ is the class of a reduced and irreducible divisor such that $D^2=-2$, then the 
reflection $R_{\widetilde{D}}$ around the strict transform of $D$ is a monodromy operator.
Moreover $\widetilde{D}=\pi^*D$ and $\widetilde{D}^2=-2$.
\end{prop}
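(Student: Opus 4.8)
The plan is to verify the claim by carefully unwinding what the minimal model program produces and then exploiting integrality of the monodromy reflection $R_{\widetilde{D}}$. First I would record the geometric input from the surrounding discussion: by \cite[Theorem~5.3]{MeachanZhang:BirationalGeometry} the class $D$ is the exceptional divisor of a divisorial contraction in a Bridgeland birational model, and by the Cone Theorem together with \cite[Proposition~2.3]{MeachanZhang:BirationalGeometry} the divisor $D$ (and hence its strict transform $\widetilde{D}$) is uniruled. Running the MMP for the pair $(\widetilde{M}_v(S),\pi^*D)$, whose termination is guaranteed by \cite{LehnPacienza:MMP}, the first step contracts exactly the rational curves in $\widetilde{D}$, producing a divisorial contraction $\widetilde{M}_v(S)\to Y$ with $\widetilde{D}$ as the contracted prime exceptional divisor. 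Thus $\widetilde{D}$ is a prime exceptional divisor, and by \cite{Markman:PrimeExceptional} the reflection $R_{\widetilde{D}}$ is a monodromy operator.

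The second and more delicate step is to pin down the relation between $\widetilde{D}$ and $\pi^*D$. Since $\pi$ is the symplectic resolution with exceptional divisor $\widetilde{\Sigma}$, pullback gives a relation of the form $\pi^*D=\widetilde{D}+m\widetilde{\Sigma}$ for some integer $m\geq 0$ (the strict transform differs from the total transform by a multiple of the exceptional divisor). Because $\pi^*$ preserves the lattice structure by Theorem~\ref{thm:big OG10}(3), the class $\pi^*D$ again has square $-2$. The key arithmetic constraint comes from demanding that $R_{\widetilde{D}}$, the reflection in $\widetilde{D}$, be an \emph{integral} isometry of $\oH^2(\widetilde{M}_v(S),\ZZ)$: the reflection formula
$$
R_{\widetilde{D}}(x)=x-\frac{2(x,\widetilde{D})}{\widetilde{D}^2}\widetilde{D}
$$
is integral precisely when $\widetilde{D}^2$ divides $2(x,\widetilde{D})$ for all $x$, i.e.\ when the divisibility of $\widetilde{D}$ in the lattice is compatible with $\widetilde{D}^2$. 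Exploiting this together with $\pi^*D=\widetilde{D}+m\widetilde{\Sigma}$ and the known intersection behaviour of $\widetilde{\Sigma}$ inside the $G_2(-1)$ summand forces $m=0$ and $\widetilde{D}^2=-2$.

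The main obstacle I expect is precisely this last arithmetic step: showing that integrality of $R_{\widetilde{D}}$ genuinely rules out $m>0$ rather than merely constraining it. One must compute $\widetilde{D}^2=(\pi^*D-m\widetilde{\Sigma})^2=-2-2m(\pi^*D,\widetilde{\Sigma})+m^2\widetilde{\Sigma}^2$ using the explicit form of $G_2(-1)$ in Example~\ref{example:O'Grady moduli space}, and then verify that any $m>0$ produces a value of $\widetilde{D}^2$ incompatible with $R_{\widetilde{D}}$ being both a reflection of a prime exceptional divisor and an integral isometry. Since prime exceptional divisors on manifolds of OG10 type have tightly restricted Beauville--Bogomolov square (either $-2$ or a small negative value attached to $2$-divisibility), the cleanest route is to combine the classification of numerical possibilities for prime exceptional classes with the divisibility forced by integrality, and check directly that only $m=0,\ \widetilde{D}^2=-2$ survives. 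Once this is established, the statement follows immediately.
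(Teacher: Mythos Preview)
Your proposal is correct and follows essentially the same route as the paper's own argument: uniruledness of $D$ via Meachan--Zhang and the Cone Theorem, termination of the MMP via Lehn--Pacienza to identify $\widetilde{D}$ as prime exceptional, Markman's theorem for the monodromy claim, and finally the relation $\pi^*D=\widetilde{D}+m\widetilde{\Sigma}$ combined with integrality of $R_{\widetilde{D}}$ to force $m=0$ and $\widetilde{D}^2=-2$. For the arithmetic step you flag as delicate, note that since $\pi^*D\perp\widetilde{\Sigma}$ and $\widetilde{\Sigma}^2=-6$ one gets $\widetilde{D}^2=-2-6m^2$, while the existence of $x\in v^\perp$ with $(D,x)=1$ gives $\operatorname{div}(\widetilde{D})=1$; integrality of $R_{\widetilde{D}}$ then requires $(-2-6m^2)\mid 2$, hence $m=0$ immediately---no appeal to a classification of prime exceptional classes is needed.
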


\begin{rmk}
Notice that if the Cartier divisor $D$ is not reduced, then the result is false. In fact, on the
O'Grady moduli space $M_S$ (cf.\ Example~\ref{example:O'Grady moduli space} for notation and results) the divisor $D=2B$ is not reduced and its strict transform $2\widetilde{B}=\pi^*(2B)-\widetilde{\Sigma}$ has degree $-8$.
\end{rmk}

\begin{example}\label{example:R_L}
Let $S$ be a projective $K3$ surface of genus $2$ with polarisation $H$, and suppose that
$\Pic(S)=\ZZ H+\ZZ K$ such that $K^2=-2$ and $(H,K)=0$. 
Let $v=(2,0,-2)$ be the Mukai vector of the O'Grady moduli space
and consider the class $(-1,H+K,-1)\in v^\perp$, that defines an irreducible and effective
divisor $Z$ on $M_S$ via the Hodge isometry in item $(3)$ of Theorem~\ref{thm:big OG10}. By construction $Z^2=-2$, so that by the proposition above $\widetilde{Z}=\pi^*Z$, and by the isometry (\ref{eqn:Gamma_v}) in Example~\ref{example:O'Grady moduli space},
$$\widetilde{Z}=H+K-2\widetilde{B}-\widetilde{\Sigma}.$$
\end{example}

\section{Monodromy operators coming from the family of intermediate Jacobian fibrations}\label{section:J_V}

In this section $V$ is a generic cubic fourfold, $\pi_V\colon\cJ_V\to\PP^5$ is the compactified 
intermediate Jacobian fibration.
Let $\Theta$ be any relative theta divisor on $\cJ_U$, rigidified along the zero section, 
and let $\overline{\Theta}$ be its closure in $\cJ_V$. Denote by $b_V=\pi_V^*\cO_{\PP^5}(1)$
the class of the fibration $\pi_V$. The following result was communicated to us by Klaus Hulek and Radu Laza.
\begin{prop}[Hulek--Laza]\label{prop:HLS}
\begin{enumerate}
\item The lattice $U_V=\langle\overline{\Theta},b_V\rangle$, generated by the relative theta 
divisor and the class of the fibration, is a hyperbolic plane.
\item There exists an isogeny of Hodge structures\footnote{Given two Hodge structures $H$ and $H'$, an isogeny between them is a morphism $\alpha\colon H\to H'$ such that $\alpha_{\QQ}\colon H\otimes\QQ\to H'\otimes\QQ$ is an isomorphism of Hodge structures.}
\begin{equation}\label{eqn:HLS}
\alpha\colon\oH^4(V,\ZZ)_{\operatorname{prim}}\longrightarrow U_V^\perp\subset\oH^2(\cJ_V,\ZZ)
\end{equation}
and an integer $N>0$ such that $x.y=-Nq_V(\alpha(x),\alpha(y))$ 
for any $x,y\in\oH^4(V,\ZZ)_{\operatorname{prim}}$.
\end{enumerate}
\end{prop}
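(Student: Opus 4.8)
The plan is to prove the two assertions by different methods: (1) by intersection theory on $\cJ_V$, and (2) by a correspondence between $V$ and the relative intermediate Jacobian. I treat them in turn.

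For (1), I would first isolate the two numerical facts that pin down the lattice. Since $\pi_V$ is a Lagrangian fibration (Theorem~\ref{thm:LSV}) and $b_V=\pi_V^*\cO_{\PP^5}(1)$, one has $b_V^6=\pi_V^*(\cO_{\PP^5}(1)^6)=0$ because $\dim\PP^5=5$; the Fujiki relation $\int_{\cJ_V}b_V^{10}=c\cdot q_V(b_V)^5$ then forces $q_V(b_V)=0$. For the cross term, note that $b_V^5=\pi_V^*[\mathrm{pt}]$ is the class of a general fibre $J$, which is a principally polarised abelian fivefold with polarisation $\theta=\overline{\Theta}|_J$; hence $\int_{\cJ_V}\overline{\Theta}^5 b_V^5=\int_J\theta^5=5!$. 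I would then compare this with the polarised Fujiki relation: because $q_V(b_V)=0$, every pairing containing a $b_V$--$b_V$ pair vanishes, so the only surviving terms are the $5!$ perfect matchings sending each $b_V$ to a distinct $\overline{\Theta}$, each contributing $q_V(\overline{\Theta},b_V)^5$. Using that the Fujiki constant of OG10 is $c=945=9!!$ (so that the coefficient $c/9!!$ of the polarised relation is $1$), this reads $\int_{\cJ_V}\overline{\Theta}^5 b_V^5=5!\,q_V(\overline{\Theta},b_V)^5$, whence $q_V(\overline{\Theta},b_V)=1$. With these in hand, the Gram matrix of $(\overline{\Theta},b_V)$ is $\left(\begin{smallmatrix}\overline{\Theta}^2&1\\1&0\end{smallmatrix}\right)$, of determinant $-1$, so $U_V$ is unimodular of signature $(1,1)$; since $\oH^2(\cJ_V,\ZZ)$ is even, $\overline{\Theta}^2$ is even, and replacing $\overline{\Theta}$ by $\overline{\Theta}-\tfrac12\overline{\Theta}^2\,b_V$ produces a standard hyperbolic basis, so $U_V\cong U$.

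For (2), I would construct $\alpha$ over the smooth locus and then extend. Over the open set $U$ of smooth sections, the local system $R^1(\pi_U)_*\ZZ$ has fibre $\oH^1(J_Y,\ZZ)=\oH^3(Y,\ZZ)$ at a smooth cubic threefold $Y=V\cap\oH$. The universal section $\mathcal Y\subset V\times U$, together with its two projections, defines a correspondence inducing a morphism of Hodge structures from $\oH^4(V,\ZZ)_{\operatorname{prim}}$ (which is of K3 type after a Tate twist, since $h^{3,1}(V)=1$) to the variation $R^1(\pi_U)_*\ZZ$, and hence, through the Leray filtration, into $\oH^2(\cJ_V,\ZZ)$. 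I would take this to be $\alpha$ and verify three points: that its image is orthogonal to both $\overline{\Theta}$ and $b_V$, so that it lands in $U_V^\perp$ (the fibre and polarisation classes are annihilated by the correspondence); that $\alpha$ is injective with finite cokernel, which follows because for generic $V$ both $\oH^4(V,\ZZ)_{\operatorname{prim}}$ and $U_V^\perp$ are irreducible weight-two Hodge structures of K3 type of the same rank $22$ and $\alpha$ is nonzero; and that cup product on $V$ and $q_V$ agree up to a constant $-N$, which I would extract from the degree of the correspondence together with the polarised Fujiki relation.

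The main obstacle is the integral and global bookkeeping in (2). Over $U$ the comparison of $\oH^4(V)_{\operatorname{prim}}$ with the variation of intermediate Jacobians is classical, but promoting it to an integral statement on the \emph{compactified} tenfold $\cJ_V$ requires controlling the boundary and the singular fibres of the LSV construction, checking that $\alpha$ extends integrally and that no boundary class obstructs orthogonality to $U_V$. Determining the precise integer $N$ is the most delicate step. As a consistency check that the result is of the right shape, the two lattices are in fact abstractly anti-isometric: one has $\oH^4(V,\ZZ)_{\operatorname{prim}}\cong U^2\oplus E_8^2\oplus A_2$, and since $A_2\cong G_2$ (there is a unique even positive-definite rank-two lattice of discriminant $3$), twisting by $-1$ gives $U^2\oplus E_8(-1)^2\oplus G_2(-1)\cong U_V^\perp$. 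Thus one expects $N$ to be small and governed by the common discriminant group $\ZZ/3$, the remaining work being to realise this compatibility through the explicit geometric $\alpha$ rather than abstractly.
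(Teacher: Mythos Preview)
Your argument for (1) is the paper's: both deduce $q_V(b_V)=0$ from $b_V^{10}=0$ via the Fujiki relation, and $q_V(\overline{\Theta},b_V)=1$ by combining $\overline{\Theta}^5\cdot b_V^5=5!$ (Poincar\'e's formula on the principally polarised abelian fivefold fibre) with the polarised Fujiki relation for the OG10 constant $945$; both then conclude $U_V\cong U$ independently of the value of $\overline{\Theta}^2$.

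For (2) the overall strategy also matches the paper's---produce a correspondence, use irreducibility of the rational Hodge structure on $\oH^4(V,\QQ)_{\mathrm{prim}}$ for very general $V$ to get a rational isomorphism, then invoke the abstract anti-isometry of the two lattices to see the forms agree up to sign and a scalar---but the construction of $\alpha$ differs. The paper does not pass through $R^1(\pi_U)_*\ZZ$ and a Leray filtration. It invokes instead a distinguished rational cycle $Z\in\CH^2(\cJ_V\times_{\PP^5}\cY_V)_{\QQ}$ supplied by \cite[Lemma~1.1]{LSV:O'Gr10}, and sets $\alpha'=[Z]_*\circ q^*$, where $q\colon\cY_V\to V$ is the $\PP^4$-bundle of hyperplanes through a point. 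This yields in one stroke a morphism of rational Hodge structures $\oH^4(V,\QQ)\to\oH^2(\cJ_V,\QQ)$ already defined on the compactified family; clearing denominators of $Z$ then gives the integral isogeny and the constant $N$. Your route is morally the same---the cycle $Z$ is precisely the relative Abel--Jacobi class realising $\oH^3(Y)\cong\oH^1(J_Y)$ that you are implicitly using---but as written, ``a morphism \dots\ to the variation $R^1(\pi_U)_*\ZZ$'' is not well-posed, and making it precise requires threading two Leray spectral sequences and then extending across the boundary, exactly the obstacles you yourself flag; the cycle-theoretic formulation dissolves them. One further small difference: rather than arguing that the fibre and polarisation classes are annihilated by the correspondence, the paper observes that $\alpha'(h^2)$, being a Hodge class, must lie in $\NS(\cJ_V)_{\QQ}=(U_V)_{\QQ}$ for very general $V$, and then irreducibility forces $\alpha(\oH^4_{\mathrm{prim}})=(U_V^\perp)_{\QQ}$.
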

With an abuse of notation we denote by $q_V$ the Beauville--Bogomolov--Fujiki form on $\cJ_V$.
\begin{proof}
The first claim follows from the Fujiki formula
\cite[Theorem~4.7]{Fujiki:Constant} with Fujiki constant $945$ (\cite{Rapagnetta:BeauvilleFormIHSM}). In fact, for dimension reasons one gets $b_V^{10}=0$, and so $q_V(b_V)=0$; on the other hand, applying the Fujiki formula again to the class 
$\overline{\Theta}+tb_V$ and taking the coefficient of the term $t^5$, one gets 
$q_{V}(\overline{\Theta},b_V)=(\overline{\Theta}^5.b_V^5)/5!=1$, where the last equality 
follows from Poincar\'e formula.
This is enough to conclude that $U_V$ is a hyperbolic plane, despite the value of 
$q_{V}(\overline{\Theta})$.

For the second claim, let us first construct the map $\alpha$. 
By \cite[Lemma~1.1]{LSV:O'Gr10}, there is a distinguished rational cycle 
$Z\in\CH^2(\cJ_V\times_{\PP^5}\cY_V)_{\QQ}$, where $\cY_V\to\PP^5$ is the universal family
of linear sections of $V$. The associated correspondence is $[Z]_*\colon\oH^4(\cY_V,\QQ)\longrightarrow\oH^2(\cJ_V,\QQ)$ defined by $[Z]_*(x)=\pi_{1*}(\pi_2^*x.Z)$, where $\pi_1$ and $\pi_2$ are the projection from $\cJ_V\times_{\PP^5}\cY_V$ to the two factors.
Denote by $q\colon\cY_V\to V$ the map that is the inclusion on 
each linear section (notice that $q$ is a $\PP^4$-bundle). Then 
$$\alpha':=[Z]_*\circ q^*\colon\oH^4(V,\QQ)\longrightarrow\oH^2(\cJ_V,\QQ)$$ 
is a morphism of rational Hodge structures by construction.
If $V$ is very general and $h\in\oH^2(V,\QQ)$ is the hyperplane section, then 
$\alpha'(h^2)\in (U_V)_{\QQ}$, so the same must hold for generic $V$. In particular, the restriction
$$\alpha\colon\oH^4(V,\QQ)_{\operatorname{prim}}\longrightarrow (U_V^\perp)_{\QQ}$$
is a well-defined morphism of rational Hodge structures.
Now, since $V$ is general, the Hodge structure on $\oH^4(V,\QQ)_{\operatorname{prim}}$ 
is irreducible and, since $\alpha$ is non-zero, $\alpha$ is an isomorphism of rational Hodge structures.

Also, since the lattices 
$\oH^4(V,\ZZ)_{\operatorname{prim}}$ and $U_V^\perp$
are anti-isometric, and $\alpha$ sends isotropic classes to isotropic classes, the $\QQ$-linear 
extensions of the symmetric bilinear pairing must coincide.

Finally, by clearing the denominators of $Z$, one gets an integral cycle. Moreover, notice that $\alpha'(h^2)\in U_V$, since $U_V\subset\oH^2(\cJ_V,\ZZ)$ is saturated (it is a hyperbolic plane by the previous point). Then the map $\alpha$
restricts to an isogeny of integral Hodge structures and the lattice structures are preserved up to 
a constant.
\end{proof} 
The constant $N$ comes from the fact that the cycle $Z$ is rational; there is no reason to
expect that $Z$ is integral. 

Now, we want to recall the construction of a distinguished theta divisor. Recall that $U\subset\PP\oH^0(V,\cO_V(1))^*$ is the open subset parametrising smooth linear sections. For $u\in U$, we denote by $Y_u\subset V$ the corresponding smooth cubic threefold.
Let $\cF_U$ be the relative Fano surface of lines, that is $\cF_u=F({Y_u})$ for any $u\in U$.
Consider the difference morphism
\begin{equation}\label{eqn:difference map}
f_V\colon\cF_U\times_U\cF_U\longrightarrow\cJ_U,
\end{equation}
defined fibrewise by sending two lines to the Abel--Jacobi invariant of their difference.
By \cite[Theorem~13.4]{ClemensGriffiths:CubicThreefolds}, the image (with reduced scheme structure) of this map is a relative theta divisor.
We denote by $\Theta_V$ the closure of the image of $f_V$. Notice that this is an effective
divisor and that, by \cite[Proposition~5.3, Theorem~5.7]{LSV:O'Gr10}, it is relatively ample on 
$\cJ_V$.
We will need the following result.
\begin{lemma}[\protect{\cite[Theorem~2]{Sacca:BirationalGeometryLSV}}]\label{lemma:theta exceptional}
$\Theta_V$ is a prime exceptional divisor. In particular its degree is $-2$ and the reflection
$R_{\Theta_V}$ is a monodromy operator.
\end{lemma}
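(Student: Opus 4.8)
The plan is to show that $\Theta_V$ is a \emph{prime exceptional divisor}, i.e.\ a reduced and irreducible effective divisor of negative Beauville--Bogomolov--Fujiki square, and then to invoke \cite{Markman:PrimeExceptional} exactly as in Section~\ref{section:exceptional reflections}: Markman's theorem guarantees that the reflection $R_{\Theta_V}$ is a monodromy operator. Thus the statement splits into three tasks: (i) $\Theta_V$ is prime; (ii) $q_V(\Theta_V)<0$; and (iii) the degree is exactly $-2$.

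Task (i) is the easiest. Since the relative Fano surface $\cF_U$ is irreducible over the irreducible base $U$, so is the fibre product $\cF_U\times_U\cF_U$; its image under the difference morphism $f_V$, and hence its closure $\Theta_V$, is therefore irreducible. Reducedness holds by construction, and on a general fibre $\Theta_V$ restricts to the (reduced) theta divisor of the intermediate Jacobian by \cite[Theorem~13.4]{ClemensGriffiths:CubicThreefolds}. In particular $\Theta_V$ restricts to a principal polarisation on each smooth fibre, so the same computation as in the proof of Proposition~\ref{prop:HLS} (the Fujiki relation together with the Poincar\'e formula $\Theta_V^5.b_V^5=5!$) gives $q_V(\Theta_V,b_V)=1$. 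Hence $\Theta_V$ is primitive of divisibility one in $\oH^2(\cJ_V,\ZZ)$.

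The heart of the argument, and the main obstacle, is task (ii). I see two routes. The first is a direct intersection computation: by the Fujiki relation with constant $945$ (\cite{Fujiki:Constant}, \cite{Rapagnetta:BeauvilleFormIHSM}) one has $\Theta_V^{10}=945\,q_V(\Theta_V)^5$, so it suffices to evaluate $\Theta_V^{10}$ from the generic degree of $f_V$ and the Poincar\'e formula on the fibres. The second, more geometric route is to exhibit Sacc\`a's divisorial contraction of $\cJ_V$ contracting $\Theta_V$ (\cite{Sacca:BirationalGeometryLSV}) and then to deduce negativity from the divisorial Zariski decomposition of Boucksom: a prime divisor of non-negative square is movable, which is incompatible with being contracted. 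I expect this step to be genuinely hard, because $\Theta_V$ is \emph{relatively ample} over $\PP^5$ and its fibres are theta divisors, which are of general type rather than uniruled; so, unlike the exceptional divisors of Section~\ref{section:exceptional reflections}, the negativity of $\Theta_V$ cannot be read off a uniruling of the divisor and is a purely global phenomenon.

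Once $q_V(\Theta_V)<0$ is established, $\Theta_V$ is prime exceptional and $R_{\Theta_V}$ is a monodromy operator by \cite{Markman:PrimeExceptional}. For task (iii), recall from Theorem~\ref{thm:big OG10} that $\oH^2(\cJ_V,\ZZ)\cong U^3\oplus E_8(-1)^2\oplus G_2(-1)$ is an even lattice whose discriminant group has order $3$. Since $\Theta_V$ has divisibility one, integrality of the isometry $R_{\Theta_V}$ forces $q_V(\Theta_V)\mid 2$; combined with evenness and $q_V(\Theta_V)<0$ this yields $q_V(\Theta_V)=-2$, as claimed.
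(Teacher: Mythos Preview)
The paper does not prove this lemma at all: it is imported wholesale from Sacc\`a (\cite[Theorem~2]{Sacca:BirationalGeometryLSV}) and used as a black box, for instance in the proof of Proposition~\ref{prop:Theta}. So there is no ``paper's own proof'' to compare with; what you have written is an outline of how such a proof might be organised.

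Your structural decomposition into (i)--(iii) is correct, and your treatment of (i) and (iii) is fine. For (i), the fibre product $\cF_U\times_U\cF_U$ is indeed irreducible because the projection to $\cF_U$ is smooth with irreducible fibres (Fano surfaces of smooth cubic threefolds), and the closure with reduced structure is then prime. For (iii), once (ii) is known, Markman's theorem gives integrality of $R_{\Theta_V}$, and your divisibility argument forces $q_V(\Theta_V)=-2$; the logic here is not circular, since integrality is a \emph{consequence} of prime exceptionality via \cite{Markman:PrimeExceptional}, not an input.

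The genuine gap is exactly where you locate it: step (ii), showing $q_V(\Theta_V)<0$. You propose two routes but carry out neither, and you are right that this is the substantive content. Your first route (computing $\Theta_V^{10}$ via the degree of $f_V$ and Fujiki) is in principle feasible but delicate: one needs the degree of the difference map $F(Y)\times F(Y)\to\Theta_Y$, which is $6$ by Clemens--Griffiths, together with intersection theory on the compactification, and the sign of the resulting fifth root is not automatic. Your second route is essentially what Sacc\`a does: she constructs an explicit rational map contracting $\Theta_V$ (related to the birational involution $\cJ_V\dashrightarrow\cJ_V^T$ to the twisted intermediate Jacobian), from which negativity follows. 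Your observation that $\Theta_V$ is relatively ample with general-type fibres is well taken: it explains why no cheap argument of the kind used in Section~\ref{section:exceptional reflections} is available, and why this really requires the input from \cite{Sacca:BirationalGeometryLSV}.

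In short: your proposal is an accurate roadmap, but it is not a proof, because the one non-formal step is left open. Since the paper itself defers to Sacc\`a here, citing her result (as the paper does) is the appropriate resolution.
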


The hyperbolic plane $U_V$ has thus a distinguished basis given by $b_V$ and $\Theta_V$.
By Theorem~\ref{thm:LSV}, the Beauville--Bogomolov--Fujiki lattice structure on $\oH^2(\cJ_V,\ZZ)$ is abstractly isometric to the one in item $(4)$ of Theorem~\ref{thm:big OG10}, and by item $(1)$ in Proposition~\ref{prop:HLS}, we write
$$ \oH^2(\cJ_V,\ZZ)\cong U_V\oplus U^{2}\oplus E_8(-1)^{2}\oplus G_2(-1).$$

The restriction map 
$$
r\colon\Or^+\left(\oH^2(\cJ_V,\ZZ),U_V\right)\longrightarrow\Or^+(U_V^\perp)
$$
is surjective and, by Proposition~\ref{prop:HLS} and \cite[Th{\'e}or{\`e}me~2]{Beauville:MonodromyHypersurfaces}, 
$$ \Ort^+(U_V^\perp)\cong\Ort^+(\oH^4(V,\ZZ)_{\operatorname{prim}})=\Mon^4(V). $$
Notice that Beauville shows that $\Mon^4(V)=\Or^+(\oH^4(V,\ZZ))_{h^2}$, where the latter is the group of isometries $g$ such that $g(h^2)=h^2$. By the last part of the proof of \cite[Th{\'e}or{\`e}me~2]{Beauville:MonodromyHypersurfaces} though, it follows that $\Or^+(\oH^4(V,\ZZ))_{h^2}=\Ort^+(\oH^4(V,\ZZ)_{\operatorname{prim}})$ as claimed.

Let 
$\cU\subset\PP(\oH^0(\PP^5,\cO(3))^*)$ be the parameter space of smooth cubic fourfolds.
We denote by $\cU'\subset\cU$ the open subset of non-special cubic fourfolds, so in particular
$\cU'$ is the complement in $\cU$ of the union of countably many divisors. 
By Theorem~\ref{thm:LSV}, there is a family
$$ \upsilon\colon\cJ_{\cU'}\longrightarrow\cU'$$
of intermediate Jacobian fibrations.

\begin{rmk}\label{rmk:Theta and b}
Notice that $\upsilon\colon\cJ_{\cU'}\to\cU'$ is a family of Lagrangian fibrations, and therefore any 
monodromy operator aring from this family must preserve $b_V$. Moreover, since $\Theta_V$ is a prime exceptional divisor, the class $\Theta_V$ of the theta divisor must be preserved as well.
\end{rmk}

\begin{prop}\label{thm:my monodromy}
Let $g\in\Ort^+(\oH^2(\cJ_V,\ZZ))$ be such that $g(\Theta_V)=\Theta_V$ and
$g(b_V)=b_V$. Then $g$ is a monodromy operator.
\end{prop}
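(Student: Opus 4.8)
The plan is to mirror, in the family of intermediate Jacobian fibrations, exactly the strategy used for O'Grady moduli spaces in Theorem~\ref{thm:my operators O'Grady}. Let $g\in\Ort^+(\oH^2(\cJ_V,\ZZ))$ satisfy $g(\Theta_V)=\Theta_V$ and $g(b_V)=b_V$. Since $U_V=\langle\overline{\Theta},b_V\rangle=\langle\Theta_V,b_V\rangle$ is a hyperbolic plane (Proposition~\ref{prop:HLS}) and $g$ fixes both generators, $g$ restricts to the identity on $U_V$ and hence $g\in\Or^+(\oH^2(\cJ_V,\ZZ),U_V)$, so it descends under the restriction map $r$ to an isometry $r(g)\in\Ort^+(U_V^\perp)$. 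Via the identification $\Ort^+(U_V^\perp)\cong\Mon^4(V)$ coming from Proposition~\ref{prop:HLS} together with Beauville's theorem, $r(g)$ is a monodromy operator of the cubic fourfold $V$, i.e.\ it is realised by parallel transport along a loop $\gamma$ in the space $\cU$ (or $\cU'$) of smooth cubic fourfolds, based at the point corresponding to $V$.

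The next step is to lift this loop to the family $\upsilon\colon\cJ_{\cU'}\to\cU'$ of Theorem~\ref{thm:LSV}. As in the O'Grady case, I would first arrange that $[\gamma]$ lies in $\pi_1(\cU',V)$: the complement $\cU'\subset\cU$ is obtained by removing countably many Hassett divisors, and one invokes the surjectivity of $\pi_1(\cU')\to\pi_1(\cU)$ (the analogue of the transversality argument used via \cite[Th{\'e}or{\`e}me~2.3 in Chapter~X]{Godbillon:AlgebraicTopology} in Theorem~\ref{thm:my operators O'Grady}, since removing a countable union of proper closed subsets does not change $\pi_1$ for a loop in general position). Parallel transport along $\gamma$ inside $R^2\upsilon_*\ZZ$ then yields a genuine monodromy operator $g'\in\Mon^2(\cJ_V)$. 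By construction $r(g')=r(g)$, since the restriction map is compatible with the identification $U_V^\perp\cong\oH^4(V,\ZZ)_{\operatorname{prim}}$ and $g'$ is the operator induced on $\cJ_V$ by the underlying monodromy of $V$.

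It remains to prove $g'=g$, which reduces to checking that $g'$ acts on $U_V$ the same way $g$ does, namely as the identity. This is precisely the content of Remark~\ref{rmk:Theta and b}: any monodromy operator arising from $\upsilon$ preserves $b_V$, because $\upsilon$ is a family of Lagrangian fibrations and the fibration class is flat in the local system; and it preserves $\Theta_V$, because the relative theta divisor, being a prime exceptional divisor (Lemma~\ref{lemma:theta exceptional}), spreads out flatly over $\cU'$. Hence $g'$ and $g$ agree on both $U_V$ and $U_V^\perp$; since $U_V\oplus U_V^\perp$ has finite index in $\oH^2(\cJ_V,\ZZ)$ and both isometries lie in $\Ort^+$ (acting trivially on the discriminant group), they must coincide on the whole lattice. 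Therefore $g=g'\in\Mon^2(\cJ_V)$.

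The main obstacle I expect is the topological input in the second paragraph: unlike the K3 case, where one reduces to a curve and uses Picard--Lefschetz theory on a clean nodal degeneration, here one must control $\pi_1$ of the space of smooth cubic fourfolds relative to the locus of \emph{non-special} ones, and ensure that the monodromy of $V$ genuinely lifts to $\cJ_V$ with the correct action on $U_V^\perp$ under the isogeny $\alpha$ of Proposition~\ref{prop:HLS}. The subtlety is that $\alpha$ is only an isogeny (the cycle $Z$ is rational, with the constant $N$), so one needs the compatibility of the monodromy action to hold integrally on $\oH^2(\cJ_V,\ZZ)$ and not merely rationally; the flatness of both $b_V$ and $\Theta_V$ together with the lattice-theoretic rigidity on $U_V$ is what closes this gap.
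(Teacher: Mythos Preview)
Your proof is correct and follows exactly the paper's strategy. The topological obstacle you flag---surjectivity of $\pi_1(\cU')\to\pi_1(\cU)$ after removing countably many Hassett divisors---is handled in the paper by a short Baire-category argument on the space of loops (Lemma~\ref{lemma:U'}), and note that since $U_V$ is unimodular you have $U_V\oplus U_V^\perp=\oH^2(\cJ_V,\ZZ)$ on the nose, so the finite-index step at the end is unnecessary.
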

\begin{proof}
Let $g$ be as in the statement; in particular $g\in\Ort^+\left(\oH^2(\cJ_V,\ZZ),U_V\right)$. 
Then, its restriction $r(g)$ induces the isometry
$\tilde{g}\in\Ort^+\left(\oH^4(V,\ZZ)_{\operatorname{prim}}\right)=\Mon^4(V)$. 
Then there exists a loop $\gamma$ in $\cU$ such that $\tilde{g}$ is the parallel transport along 
$\gamma$. The base $\cU\subset\PP^{55}$ is open in the standard topology. 
The restriction to $\cU$ of the Fubini--Study metric on $\PP^{55}$ can be non-complete on $\cU$: we can make such a metric complete by multiplying it with a smooth (scalar) function which diverges to infinity at least quadratically when approaching the boundary of $\cU$. Lemma~\ref{lemma:U'} below ensures that the natural map
$\pi_1(\cU')\to\pi_1(\cU)$ is surjective: we can move $\gamma$ away from 
special cubic fourfolds.
The parallel transport along $\gamma$ inside the local system 
$R^2\upsilon_*\ZZ$ coincides with $g$ by construction (cf.\ Remark~\ref{rmk:Theta and b}).
\end{proof}
In the proof of Proposition~\ref{thm:my monodromy} we used the following result, which is well-known to experts but of which we could not find a reference.
\begin{lemma}\label{lemma:U'}
Let $M$ be a connected and complete Riemannian manifold. Let $\{D_k\}_{k\in I}$ be a countable set 
of closed submanifolds in M of (real) codimension strictly greater than $1$. 
Let $M'=M\setminus\bigcup_{k\in I}D_k$ and let $i:M'\to M$ be the inclusion. Then the induced map
\begin{equation*}
i_*:\pi_1(M',p)\longrightarrow\pi_1(M,p)
\end{equation*}
is surjective for every $p\in M'$.
\end{lemma}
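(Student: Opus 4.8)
The plan is to construct, for an arbitrary loop $\gamma$ in $M$ based at $p\in M'$, a homotopic loop that avoids all the $D_k$, thereby showing $\gamma$ lies in the image of $i_*$. The key geometric input is that each $D_k$ has real codimension at least $2$, so a generic perturbation of any map from a $1$- or $2$-dimensional complex misses it; completeness of $M$ guarantees that the perturbation can be carried out with uniform transversality control and that geodesic homotopies stay in $M$.

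First I would reduce to transversality. Given a loop $\gamma\colon S^1\to M$ with $\gamma(p)\in M'$, I would homotope $\gamma$ (rel basepoint) to a smooth loop and then, working one $D_k$ at a time, apply the transversality theorem: a smooth map of a $1$-manifold into $M$ can be perturbed to be transverse to the submanifold $D_k$. Since $\dim S^1 + \dim D_k < \dim M$ (because $\operatorname{codim} D_k > 1$ forces $\dim D_k \le \dim M - 2$), transversality means the perturbed loop is disjoint from $D_k$. The subtlety is that there are countably many $D_k$ to avoid simultaneously and they may accumulate; I would handle this by a Baire-category or diagonal argument, choosing a sequence of ever-smaller perturbations supported away from the basepoint, so that the final loop is disjoint from every $D_k$. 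Completeness ensures that the small geodesic homotopies used to perform each perturbation remain inside $M$ and connect the successive loops.

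Second I would verify that the perturbed loop $\gamma'$ is homotopic to $\gamma$ \emph{inside $M$}, so that $[\gamma] = i_*[\gamma']$ in $\pi_1(M,p)$. The homotopy is the straight-line (geodesic) homotopy through the perturbation, which sweeps out a cylinder $S^1\times[0,1]\to M$; applying transversality now to this $2$-dimensional domain (using $2 + \dim D_k < \dim M$ only when $\operatorname{codim} D_k > 2$) is \emph{not} what we need, since we only require the homotopy to stay in $M$, not in $M'$. Thus the homotopy is allowed to cross the $D_k$, and its existence is automatic once the perturbation is small enough to lie in a tubular/geodesic-convexity neighborhood. This gives $[\gamma]\in\operatorname{Im}(i_*)$, and since $\gamma$ was arbitrary, $i_*$ is surjective.

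The main obstacle I expect is the simultaneous avoidance of countably many, possibly accumulating, submanifolds $D_k$: a single transversality perturbation removes intersection with one $D_k$, but naively iterating could reintroduce intersections with previously-cleared submanifolds or fail to converge. The fix is a careful quantitative argument—at the $k$-th stage perturb by less than the distance from the current loop to $\bigcup_{j<k} D_j$ (positive by compactness of $S^1$ and the fact that the loop already misses those finitely many closed sets), using completeness to ensure the perturbations converge to a genuine smooth loop still disjoint from all $D_j$. This compactness-of-$S^1$ plus completeness-of-$M$ combination is precisely where the hypotheses enter.
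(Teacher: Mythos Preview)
Your proposal is correct and follows essentially the same strategy as the paper's proof: use the codimension-$\geq 2$ hypothesis to show that loops avoiding a given $D_k$ are generic (you via transversality, the paper via Sard), and then a Baire-type argument to handle all countably many $D_k$ simultaneously. The paper packages this a bit more cleanly by directly applying Baire's theorem to the complete metric space of loops in the fixed homotopy class, whereas you spell out the equivalent iterative construction by hand.
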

\begin{proof}
Let $\gamma\in\pi_1(M,p)$ be a homotopy class.
Let $\cL_\gamma$ denote the set of loops $\delta$ in $M$ based at $p\in M$ such that 
$[\delta]=\gamma\in\pi_1(M,p)$. When endowed with the Hausdorff distance (induced by the complete 
metric on $M$), $\cL_\gamma$ becomes a complete metric space. For a closed submanifold 
$D\subset M$, let $\cL_\gamma(D)$ be the open subset of $\cL_\gamma$ consisting of loops disjoint 
from $D$. If the codimension of $D$ is strictly greater than $1$, Sard's theorem, applied to the inclusion 
map $\cL_\gamma\setminus\cL_\gamma(D)\to\cL_\gamma$, implies that $\cL_\gamma(D)$ is dense in 
$\cL_\gamma$. By Baire's Category theorem it follows then that $\cap_{k\in I}\cL(D_k)$ is dense in 
$\cL$ and hence there exists a loop $\bar{\delta}$ in $M$ which is disjoint from all the $D_{k}s$, i.e.\ 
$[\bar{\delta}]\in\pi_1(M',p)$. By construction $i_*([\bar{\delta}])=\gamma$.
\end{proof}

\subsection{Bridge to the O'Grady moduli space}\label{section:bridge}
We want to explain how to transport the monodromy operators arising from the LSV family
to the O'Grady family. We state now the main result and dedicate the rest of the section
to its proof.

Let $S$ be a generic $K3$ surface of genus $2$, that is $\Pic(S)=\ZZ H$ with $H^2=2$.
Define the classes 
$e=H-\widetilde{B}-\widetilde{\Sigma}$ and $f=H-2\widetilde{B}-\widetilde{\Sigma}$: they are the standard basis for a hyperbolic plane.
\begin{thm}\label{thm:LSV to OG}
Let $g\in\Ort^+(\oH^2(\widetilde{M}_S,\ZZ))$ be such that $g(e)=e$ and $g(f)=f$.
Then $g$ is a monodromy operator.
\end{thm}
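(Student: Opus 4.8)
The plan is to transport the monodromy operators produced in Proposition~\ref{thm:my monodromy} from $\cJ_V$ to $\widetilde{M}_S$ by conjugating with a suitable parallel transport operator. Write $U'=\langle e,f\rangle\subset\oH^2(\widetilde{M}_S,\ZZ)$. A direct computation in the basis $\widetilde{B},\widetilde{\Sigma}$ of $G_2(-1)$, using $H^2=2$, $\widetilde{B}^2=-2$, $\widetilde{\Sigma}^2=-6$ and $(\widetilde{B},\widetilde{\Sigma})=3$, gives $e^2=f^2=0$ and $(e,f)=1$, so $U'$ is a hyperbolic plane; moreover $\widetilde{B}=e-f$ has square $-2$ and $(f,\widetilde{B})=1$. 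Thus $U'$ plays, on the O'Grady side, the role that $U_V=\langle\Theta_V,b_V\rangle$ plays on the LSV side: the isotropic class $f$ is to match the Lagrangian fibration class $b_V$, and the effective $(-2)$-class $\widetilde{B}$ is to match the prime exceptional theta divisor $\Theta_V$. The whole theorem therefore reduces to producing a parallel transport operator $P\colon\oH^2(\cJ_V,\ZZ)\to\oH^2(\widetilde{M}_S,\ZZ)$ with $P(U_V)=U'$; everything else is formal.

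Granting such a $P$, I argue as follows. Since $P$ is a parallel transport operator between manifolds of OG10 type it is orientation preserving and induces an isomorphism of discriminant groups $A_{\cJ_V}\cong A_{\widetilde{M}_S}$ intertwining the isometry actions; hence conjugation $h\mapsto PhP^{-1}$ carries $\Ort^+(\oH^2(\cJ_V,\ZZ))$ isomorphically onto $\Ort^+(\oH^2(\widetilde{M}_S,\ZZ))$, because $\Ort$ is precisely the kernel of the action on the discriminant group. Let now $g\in\Ort^+(\oH^2(\widetilde{M}_S,\ZZ))$ satisfy $g(e)=e$ and $g(f)=f$, so that $g$ restricts to the identity on $U'$. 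Then $g':=P^{-1}\circ g\circ P\in\Ort^+(\oH^2(\cJ_V,\ZZ))$, and since $P(U_V)=U'$ and $g|_{U'}=\id$, the operator $g'$ restricts to the identity on $U_V$; in particular $g'(b_V)=b_V$ and $g'(\Theta_V)=\Theta_V$. By Proposition~\ref{thm:my monodromy}, $g'\in\Mon^2(\cJ_V)$. Finally $g=P\circ g'\circ P^{-1}$ is a composition of parallel transport operators (from $\widetilde{M}_S$ to $\cJ_V$, a monodromy loop at $\cJ_V$, and back), hence a monodromy operator of $\widetilde{M}_S$. This proves the theorem modulo the construction of $P$. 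Note that this genuinely strengthens Theorem~\ref{thm:my operators O'Grady}: fixing the two-dimensional $U'$ is weaker than fixing $H,\widetilde{B},\widetilde{\Sigma}$, so the extra operators must come from the cubic fourfold family.

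It remains to construct $P$, which is the geometric heart of the matter. I would realise both $\cJ_V$ and a birational model of $\widetilde{M}_S$ as fibres of a single family of Lagrangian fibred manifolds of OG10 type carrying a flat relative theta divisor, so that parallel transport preserves the spanned hyperbolic planes by construction (as in Remark~\ref{rmk:Theta and b}). Concretely, I would degenerate $V$ to a generic cubic fourfold containing a plane: projection from the plane exhibits $V$ as a quadric surface bundle over $\PP^2$ whose discriminant sextic defines, via the associated double cover, a $K3$ surface $S$ of genus $2$, recovering a generic one with $\Pic(S)=\ZZ H$. On the O'Grady side, $f$ is isotropic and algebraic, hence nef on a birational model on which it induces a rational Lagrangian fibration over $\PP^5$, while $\widetilde{B}$ is the prime exceptional divisor of non locally free sheaves, of square $-2$. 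Matching the fibration class $b_V$ with $f$ and the theta divisor $\Theta_V$ with $\widetilde{B}$ along the connecting family then yields $P$ with $P(U_V)=\langle f,\widetilde{B}\rangle=U'$.

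The main obstacle is exactly this last identification: one must verify that the limit of the LSV fibration as $V$ acquires a plane is birational, through a genuine parallel transport, to the O'Grady moduli space $\widetilde{M}_S$ of the associated genus $2$ surface (possibly passing through a moduli space of twisted sheaves, or a Beauville--Mukai system, on $S$), and that the fibration class together with the relative theta divisor extend as flat sections of the local system over the whole connecting base, away from the special loci which are harmless by Lemma~\ref{lemma:U'}. Once the two hyperbolic planes $U_V$ and $U'$ are matched by an honest parallel transport operator, the group-theoretic reduction above completes the proof.
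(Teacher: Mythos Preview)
Your formal reduction is correct and is exactly what the paper does once $P$ is in hand: conjugating the operators of Proposition~\ref{thm:my monodromy} by a parallel transport $P$ with $P(U_V)=U'$ yields the statement immediately. As you yourself flag, the entire content of the theorem is the construction of $P$, and this is where your proposal has a genuine gap.

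Your suggested route---degenerate $V$ to a cubic containing a plane---is not the paper's, and it does not obviously work. For a generic smooth cubic containing a plane the associated degree-$2$ $K3$ surface carries a nontrivial Brauer class, so the Kuznetsov/Addington--Thomas picture relates the cubic to \emph{twisted} sheaves on $S$, not to the untwisted O'Grady space $\widetilde{M}_S=\widetilde{M}_{(2,0,-2)}(S)$; and there is no analogue of Collino's degeneration available along this divisor that would identify the LSV fibration with a Beauville--Mukai system on $S$. The paper instead degenerates $V$ to the \emph{chordal} cubic $V_0$ (the secant variety of a Veronese surface), which is genuinely singular. The dual double cover is the genus-$2$ $K3$ surface $S$, and Collino's theorem identifies the limiting intermediate Jacobians with Jacobians of hyperelliptic genus-$5$ curves in $|2H|$; the filling-in machinery of Koll\'ar--Laza--Sacc\`a--Voisin then furnishes a parallel transport $P_1\colon\oH^2(\cJ_V,\ZZ)\to\oH^2(\widetilde{M}_{(0,2H,-4)}(S),\ZZ)$. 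The images $P_1(b_V)$ and $P_1(\Theta_V)$ are pinned down by explicit intersection computations (Propositions~\ref{prop:b_V}, \ref{prop:Theta}, \ref{prop:multiplicity T_S}). A second parallel transport $P_2$ to $\widetilde{M}_S$ is then assembled from two concrete birational maps whose action on Picard groups is computed by hand: the hyperelliptic identification $\widetilde{M}_{(0,2H,-4)}(S)\dashrightarrow\widetilde{M}_{(0,2H,-2)}(S)$ (Proposition~\ref{prop:psi}) and a Fourier--Mukai equivalence $\widetilde{M}_{(0,2H,-2)}(S)\dashrightarrow\widetilde{M}_S$ (Proposition~\ref{prop:G}). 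The composite $P=P_2\circ P_1$ satisfies $P(b_V)=f$ and $P(\Theta_V)=\widetilde{B}$, which is precisely what your argument needs.
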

The strategy is to degenerate the cubic fourfold to the chordal cubic fourfold: the intermediate
Jacobian fibration degenerates then to a (desingularised) moduli space of sheaves on the $K3$ 
surface of degree $2$, associated to the chordal cubic, with Mukai vector $(0,2H,-4)$. 
Such a moduli space is birational to the O'Grady moduli space $\widetilde{M}_S$. 
This approach is the one developed and used by Koll\'ar, Laza, Sacc\`a and Voisin 
(\cite{KollarLazaSaccaVoisin:Degenerations}) to prove that the intermediate Jacobian fibration
is of OG10 type. We study the induced map on the Picard lattices, in order to have an explicit
parallel transport operator to move the monodromy operators from 
$\cJ_V$ to $\widetilde{M}_S$.

\subsubsection{The degeneration}\label{subsection:degeneration}
Let $V_0$ be a generic chordal cubic fourfold, that is $V_0$ is the secant variety of the Veronese surface $P$ in $\PP^5$. 
Recall that $V_0$ is singular along $P$ and smooth elsewhere. Its S-equivalence class defines
a closed point in the boundary of the GIT-semistable compactification of the moduli space of
cubic fourfolds.
Now, let us pick a simple degeneration of a smooth cubic fourfold $V$ to $V_0$, that is
a pencil $\cV=\{F+tG=0\}_{t\in\Delta}$, where $V_0=\{F=0\}$ and $V=\{G=0\}$.
Consider the intersection 
$D=V\cap P$. 
Since $D\subset P\cong\PP^2$ is a smooth sextic curve, the double cover $f\colon S\to P$ 
ramified along $D$ is a smooth $K3$ surface. Moreover, since $V_0$ is general, $\Pic(S)=\ZZ H$,
where $H$ is a polarisation such that $H^2=2$.

Consider a general linear section $Y_0$ of $V_0$. This is a chordal threefold, i.e.\ the secant
variety of a rational quartic curve $\Gamma$ in $\PP^4\subset\PP^5$ ($\Gamma$ is the image
of the degree $4$ Veronese embedding of $\PP^1$). If $Y$ is the corresponding section
of $V$, and if the section is general enough, then $Y$ is a smooth cubic threefold and the 
intersection $Y\cap\Gamma\subset D$ consists of $12$ distinct points. 
The double cover $f|_C\colon C\to\Gamma$ ramified along these points is a smooth hyperelliptic
curve of genus $5$. As explained in \cite{Collino:TheFundamentalGroup}, 
via the degeneration defined by intersecting a generic linear section of $V$ as above with the 
degeneration $\cV$, the intermediate Jacobian $J_Y$ degenerates to the Jacobian $J_C$. This is a degeneration of principally polarised abelian varieties.

As explained in \cite[Section~5]{KollarLazaSaccaVoisin:Degenerations}, this implies that the central fibre $\cJ_{\cV_0}$
of the intermediate Jacobian fibration degeneration associated to $\cV$ has a reduced and
irreducbile component that is locally isomorphic to a fibration in Jacobians of hyperelliptic curves
of genus $5$.
In other words, the intermediate Jacobian fibration degeneration is birational to the desingularised
moduli space $\widetilde{M}_{(0,2H,-4)}(S)$. 

More precisely, if $U\subset\PP\oH^2(V,\cO_V(1))^*$ is the open subset of smooth linear sections, with an abuse of notation we also denote by $U$ the open subset in $|2H|$ parametrising smooth curves of genus $5$ on the $K3$ surface $S$ associated to the degeneration. We then denote by $\cJ_U$ the intermediate Jacobian fibration of $V$ and by $Pic^0_U$ the relative Picard variety of $S$. It is easy to see that $Pic^0_U\subset M_v(S)$, where $v=(0,2H,-4)$. Moreover $M_v(S)$ has a natural morphism to $|2H|$ given by assigning to each sheaf its Fitting support (see Example~\ref{exe:torsion sheaves}), and $M_v(S)$ is a compactification of the Picard variety fibration. The aforementioned results of Collino (\cite{Collino:TheFundamentalGroup}) can be re-phrased by saying that $\cJ_U$ degenerates to $Pic^0_U$. Koll\'ar, Laza, Sacc\`a and Voisin's result (\cite{KollarLazaSaccaVoisin:Degenerations}) says instead that the compactification $\cJ_V$ degenerates to the desingularised moduli space $\widetilde{M}_v(S)$.

We use this remark to compute the image of the classes $b_V$ and $\Theta_V$ under the degeneration, according to the following strategy.
\begin{itemize}
\item Recall that $b_V=\pi_V^*\cO_{\PP^5}(1)$ (see Theorem~\ref{thm:LSV} for the notation) is the class of the fibration. The Lagrangian fibration structure on $\widetilde{M}_v(S)$ is given by composing the desingularisation map $\pi\colon\widetilde{M}_v(S)\to M_v(S)$ with the Fitting support map $p\colon M_v(S)\to|2H|$. The corresponding class of the fibration is then $\tilde{b}_S=(p\circ\pi)^*\cO_{\PP^5}(1)$. Since the degeneration of $\cJ_V$ to $\widetilde{M}_v(S)$ preserves the Lagrangian fibration structure, we must have that $b_V$ goes to the class $\tilde{b}_S$. For later use, we put $b_S=p^*\cO_{\PP^5}(1)$, so that $\tilde{b}_S=\pi^*b_S$.
\item Recall that $\Theta_V$ is the closure of the image of the difference map (\ref{eqn:difference map})
$$f_V\colon\cF_U\times_U\cF_U\to\cJ_U\subset\cJ_V,$$
where $U\subset\PP\oH^0(V,\cO_V(1))^*$ is the open set of smooth linear
sections as before. Let us focus on a general fibre. 
In this case the Fano surface of lines of a smooth cubic threefold degenerates to the surface
$C^{(2)}\cup F_C$ (\cite[Proposition~2.1]{Collino:TheFundamentalGroup}). Here $C$ is the hyperelliptic
curve of genus $5$, $C^{(2)}$ is the symmetric product, $F_C\cong\PP^2$ and $C^{(2)}\cap F_C\cong K\cong\PP^1$.
The inclusion $K\subset F_C$ realises $K$ as a conic in $\PP^2$.
Notice that $\Alb(C^{(2)}\cup F_C)\cong\Alb(C^{(2)})\cong J_C$, via the restriction map
$\oH^1(C^{(2)}\cup F_C,\ZZ)\to\oH^1(C^{(2)},\ZZ)$ induced by the Mayer--Vietoris
sequence. We then look at the difference map 
$C^{(2)}\times C^{(2)}\to J_C$,
and in particular at its relative version
\begin{equation}\label{eqn:Collino Fano}
f_S\colon\cC_U^{(2)}\times_U\cC_U^{(2)}\longrightarrow\cJ_{\cC_U}\subset M_v(S).
\end{equation}
(According to the abuse of notation assumed before, $U$ is now the open subset of smooth curves in the linear system $|2H|$.)
Let $T_U$ be the image of $f_S$ and denote by $T_S$ its closure in $M_v(S)$.
Then $\Theta_V$ degenerates to the strict transform $\widetilde{T}_S$ of $T_S$.
\end{itemize}
The next task is then to write down the classes $\tilde{b}_S$ and $\widetilde{T}_S$ in a fixed basis of $\Pic(\widetilde{M}_v(S))\cong\Gamma_v^{1,1}$ (see Theorem~\ref{thm:factoriality}). 
Since $\tilde{b}_S=\pi^*b_S$, in this case it is enough to do it in the group $\Pic(M_v(S))\cong (v^\perp)^{1,1}$ (see item $(3)$ of Theorem~\ref{thm:big OG10}). For the class $\widetilde{T}_S$ instead, our first remark is that $T_S$ is a Cartier divisor (we will see shortly that $M_v(S)$ is locally factorial), so that we can consider its class in $\Pic(M_v(S))\cong (v^\perp)^{1,1}$, and then we will compute the multiplicity $m$ with which it contains the singular locus of $M_v(S)$. If $\widetilde{\Sigma}$ is the exceptional divisor of the desingularisation, then $\widetilde{T}_S=\pi^*T_S-m\widetilde{\Sigma}$. Notice that we systematically abuse notation by identifying a divisor with its cohomology class.
\medskip

We start by describing the basis of $\Pic(\widetilde{M}_v(S))$ we consider. Recall that $v=(0,2H,-4)$.
First of all, we notice that $M_v(S)$ is locally factorial,
so any Weil divisor is Cartier. In fact in this case $w=(0,H,-2)=v/2$ is such that $(w,u)\in2\ZZ$ for any $u\in\widetilde{\oH}^{1,1}(S,\ZZ)$, so that the claim follows from Remark~\ref{rmk:factoriality}. (Here we are using that $\Pic(S)=\ZZ H$.)
Recall from item $(3)$ of Theorem~\ref{thm:big OG10} that there is an isometry $\xi\colon(v^\perp)^{1,1}\to\Pic(M_v(S))$, and fix the basis $(v^\perp)^{1,1}=\langle a,b\rangle$, where $a=(-1,H,0)$ and $b=(0,0,1)$. If $D$ is a divisor on $M_v(S)$, then we write $D=\lambda \xi(a)+\mu \xi(b)$. 
The coefficients $\lambda$ and $\mu$ are determined by computing the intersection numbers $D.l_i$, $\xi(a).l_i$ and $\xi(b).l_i$, where $l_i\subset M_v(S)$, $i=1,2$, are two (linearly independent) curves classes.
If $D$ contains the singular locus $\Sigma_v$ of $M_v(S)$ with multiplicity $m$, then the class of the strict transform $\widetilde{D}$ of $D$ is
$$\widetilde{D}=\lambda\pi^*\xi(a)+\mu \pi^*\xi(b)-m\widetilde{\Sigma},$$
where $\widetilde{\Sigma}$ is the exceptional divisor of the desingularisation map.

Before continuing we make the following warning.
\begin{abusenotation}
If $M_v(S)$ is a moduli space and $\widetilde{M}_v(S)$ is its symplectic desingularisation, then by item $(3)$ of Theorem~\ref{thm:big OG10}, it follows that the pullback morphism
$\pi^*\colon\oH^2(M_v(S),\ZZ)\to\oH^2(\widetilde{M}_v(S),\ZZ)$
is Hodge isometric and injective, and moreover there is an isometry $\xi\colon v^\perp\to\oH^2(M_v(S),\ZZ)$ that preserves the Hodge structure.
We make the following abuses of notation:
\begin{itemize}
\item if $x\in v^\perp$, we write $x\in\oH^2(M_v(S),\ZZ)$ instead of $\xi(x)$;
\item if $\alpha\in\oH^2(M_v(S),\ZZ)$, we write $\alpha\in v^\perp$ instead of $\xi^{-1}(\alpha)$;
\item if $\alpha\in\oH^2(M_v(S),\ZZ)$, we write $\alpha\in\oH^2(\widetilde{M}_v(S),\ZZ)$ instead of $\pi^*(\alpha)$;
\item if $x\in v^\perp$, we write $x\in\oH^2(\widetilde{M}_v(S),\ZZ)$ instead of $\pi^*(\xi(x))$.
\end{itemize}
This is justified by the fact that both $\xi$ and $\pi^*$ preserve the Hodge and the lattice structures, so there is no ambiguity on where the lattice-theoretic operations between the classes (of any Hodge type) happen.
\end{abusenotation}
\begin{rmk}\label{rmk:modular curves}
Here we describe the standard method to define curves in a moduli space $M_v(S)$ (only for now $v$ is any Mukai vector). Let $\cE$ be a sheaf on the product $S\times C$, where $C$ is a curve, and assume that $\cE$ is flat over $C$. We say that $\cE$ is a family of sheaves on $S$ parametrised by $C$. Suppose moreover that for every $p\in C$ the restriction $\cE_p=\cE|_{S\times p}$ is semistable and $v(\cE_p)=v$. Then there exists a morphism
$$\varphi_{C,\cE}\colon C\longrightarrow M_v(S)$$
given by mapping $p\in C$ to the S-equivalence class $[\cE_p]$. In fact, since $M_v(S)$ co-represents a moduli functor, the morphism $\varphi_{C,\cE}$ is well defined by universal property.

If the image of $\varphi_{C,\cE}$ is $1$-dimensional, we will refer to the curve $\varphi_{C,\cE}(C)\subset M_v(S)$ as the curve associated to the pair $(C,\cE)$.
\end{rmk}

\emph{Vertical curve.}
Recall that $M_v(S)$ is fibred over $|2H|$ with general fibre $J_C$, the Jacobian of 
a smooth genus $5$ curve $C$. The fibration $M_v(S)\to|2H|$ is given by mapping any sheaf to its Fitting support. If we fix a point $p_0\in C$, then $C$ embeds in $J_C$ via the Abel--Jacobi map $AJ(p)=\cO_C(p-p_0)$. Define the sheaf
$$ \cE_{C,p_0}=(i\times id)_*\cO_{C\times C}\left(\Delta-p_0\times C\right),$$
where $i\colon C\to S$ is the natural embedding, $i\times id\colon C\times C\to S\times C$ and
$\Delta\subset C\times C$ is the diagonal. The sheaf $\cE_{C,p_0}$ is a sheaf on $S\times C$, flat over $C$. Moreover, for every $p\in C$, the restriction $\cE_{C,p_0}|_{S\times p}=i_*\cO_C(p-p_0)$ is a stable sheaf in $M_v(S)$. The curve $l_1$ is then the curve associated to the pair $(C,\cE_{C,p_0})$ (cf.\ Remark~\ref{rmk:modular curves}).
\begin{rmk}
This curve is not canonical: it depends on the fixed point, so one should really write $\cE_{C,p_0}$.
We drop the reference to $p_0$, and only write $\cE_C$, because it will not be important in the computations.
\end{rmk}

\emph{Horizontal curve.}
We start with the following remark.
\begin{lemma}\label{lemma:pencil}
There exists a pencil $L\cong\PP^1\subset|2H|\cong\PP^5$ and three points $p_1,p_2,p_3\in L$ such that, letting $\cC\subset S\times L$ be the family of curves on $S$ parametrised by $L$ and $\cC_p$ the curve corresponding to $L$, the following holds:
\begin{itemize}
\item if $p\neq p_1,p_2,p_3$, then $\cC_p$ is irreducible and for $p$ general (in a sense made precise in the proof) it is smooth;
\item if $p=p_i$ for $i=1,2,3$, then $\cC_{p_i}=\cC_{i,1}+\cC_{i,2}$ where $C_{i,j}\in|H|$ is a smooth curve of genus $2$;
\item the base locus of $L$ consists of eight points $q_1,\cdots,q_8$.
\end{itemize}
\end{lemma}
\begin{proof}
Recall that $f\colon S\to\PP^2$ is a double cover ramified along a sextic curve. Then
$H=f^*\cO(1)$ and $|2H|\cong|\cO(2)|$. The latter linear system parametrises conics
in $\PP^2$ and a Lefschetz pencil $L\subset|\cO(2)|$ has only three critical points
(corresponding to two incident lines - these are the points corresponding to $p_1$, $p_2$ and $p_3$). Let us denote by $\cC'$ the family of conics parametrised by $L$. If $L$ is general enough, then it has four
base points. The pullback $\cC=f^*\cC'$ is a family of curves in $S$ of class $|2H|$, parametrised by $L$ and satisfying the three properties in the lemma. In particular the curves $\cC_p$, for $p\neq p_1,p_2,p_3$, are always irreducible, and smooth when the corresponding conic is not tangent to the sextic curve.
\end{proof}
Denote by $j\colon\cC\to S\times L$ the natural inclusion and define
$$\cE_L=j_*\cO_{\cC}.$$
$\cE_L$ is a flat family of sheaves on $S$ parametrised by $L$. Moreover a direct computation shows that the sheaf $i_{p*}\cO_{\cC_p}$ is stable for every $p\in L$ (here $i_p\colon\cC_p\to S$ is the natural embedding). 
Using the convention set in Remark~\ref{rmk:modular curves}, we denote by $l_2$ the curve associated to the pair $(L,\cE_L)$. 
\begin{rmk}\label{rmk:zero section}
Notice that $l_2$ is the closure in $M_v(S)$ of a line in the zero section on $\cJ_U$ and it does not meet the singular locus of $M_v(S)$.
\end{rmk}

\begin{lemma}\label{lemma:some intersection}
The following intersection products hold:
$$ a.l_1=5, \qquad a.l_2=-1, \qquad b.l_1=0,\qquad b.l_2=1.$$
\end{lemma}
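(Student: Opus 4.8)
The plan is to compute the four intersection numbers by choosing the two test curves $l_1$ and $l_2$ carefully and exploiting the Lagrangian fibration structure of $M_v(S)$ together with the geometry of the double cover $f\colon S\to\PP^2$. Recall that the classes in $v^\perp=\langle a,b\rangle$, where $a=(-1,H,0)$ and $b=(0,0,1)$, are identified with divisor classes on $M_v(S)$ via the Mukai pairing, and the intersection number $D.l$ of a divisor $D$ with a curve $l=(B,\cE)$ parametrised by a family $\cE$ of sheaves is computed by the Mukai pairing $-\langle v(D),v(\cE)\rangle$ against the Mukai vector of the family, i.e.\ via the first Chern class of the determinant line bundle on the base $B$.

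First I would set up the determinant-line-bundle formalism: for a flat family $\cE$ of sheaves on $S$ over a base curve $B$, giving a morphism $B\to M_v(S)$, the pullback of the tautological divisor associated to $w\in v^\perp$ has degree equal to the Mukai pairing $-\langle w,v(\cE_B)\rangle$ evaluated after pushing forward, which reduces to a Riemann--Roch computation on $S\times B$. Concretely I would compute the Mukai vectors $v(\cE_{C})$ and $v(\cE_{L})$ of the two families. For the vertical curve $l_1=(C,\cE_C)$ with $\cE_C=(i\times\id)_*\cO_{C\times C}(\Delta-p_0\times C)$, I would apply Grothendieck--Riemann--Roch to the pushforward along $i\times\id$ and then pair against $a$ and $b$; since $C\in|2H|$ is a genus $5$ curve, the relevant data are $C\cdot H$, the genus, and the contribution of the diagonal, which should yield $a.l_1=5$ and $b.l_1=0$. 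For the horizontal curve $l_2=(L,\cE_L)$ with $\cE_L=j_*\cO_\cC$, the family lies in the zero section (Remark~\ref{rmk:zero section}), so it avoids the singular locus, and its Mukai vector is governed by the pencil $L\subset|2H|$ from Lemma~\ref{lemma:pencil}: the eight base points, the three reducible fibres, and the genus of the generic member. Pairing against $a$ and $b$ should give $a.l_2=-1$ and $b.l_2=1$.

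The key computational inputs are: the Mukai vector $v=(0,2H,-4)$, so that $v^2=(2H)^2=4H^2=8$ matches $w^2=2$ with $v=2w$; the fact that $\langle a,a\rangle$, $\langle a,b\rangle$, $\langle b,b\rangle$ are determined by $a=(-1,H,0)$, $b=(0,0,1)$ and $H^2=2$, giving $\langle a,a\rangle=H^2=2$, $\langle a,b\rangle=1$, $\langle b,b\rangle=0$; and the Todd genus / Euler characteristic contributions from the fibre curves. I would carry out the GRR computation for $\cE_C$ first, using that $\Delta^2=-\deg K_C=-(2g-2)=-8$ on $C\times C$ and that $C\cdot H=(2H)\cdot H=4$ pulled back appropriately, then read off the two pairings; I would then do the analogous but simpler computation for $\cE_L$, where the flatness over $L$ and the explicit description of $\cC$ make the determinant bundle degree a direct count involving the eight base points.

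The main obstacle I expect is pinning down the precise normalisation of the tautological/determinant-line-bundle construction so that the Mukai pairing sign conventions line up with the stated integers, particularly for the vertical curve where the twist by $-p_0\times C$ and the pushforward along the non-flat-looking map $i\times\id$ must be handled via GRR with the correct Todd class of the (singular) image; getting the contribution of the diagonal $\Delta$ and the ramification/genus terms to combine into exactly $a.l_1=5$ rather than an off-by-a-constant answer will require care. Verifying $b.l_1=0$ should serve as a useful consistency check, since $b=(0,0,1)$ detects only the rank-zero, $H^2$-degree-zero part of the Chern character of the pushforward, and the Abel--Jacobi family $\cE_C$ is built precisely to have trivial such component along the Jacobian fibre.
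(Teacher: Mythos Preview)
Your proposal is correct and follows essentially the same approach as the paper: both invoke the determinant-line-bundle formula (the paper cites \cite[Theorem~8.1.5]{HuybrechtsLehn:ModuliSpaces}) to express $w.l_i$ as $\deg c_1\bigl(\pi_{!}(\cE\otimes\pi_S^*G)\bigr)$ for a class $G$ with $v(G^\vee)=w$, and then apply Grothendieck--Riemann--Roch twice---once to replace the pushforward by $\ch(\cE)\cdot\pi_S^*(w^\vee\sqrt{\td_S})$, and once to compute $\ch(\cE_C)$ via the pushforward $(i\times\id)_*$ from $C\times C$. The paper carries this out only for $a.l_1$ and declares the remaining three ``similar''; your outline of the horizontal computation via the pencil data from Lemma~\ref{lemma:pencil} is a reasonable way to fill in what the paper omits.
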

\begin{proof}
Let us outline the computation of $a.l_1$.
By \cite[Theorem~8.1.5]{HuybrechtsLehn:ModuliSpaces}, if $G$ is a sheaf on $S$ such that 
the Mukai vector 
of $G^\vee$ is $a$, then 
\begin{equation}\label{eqn:8.1.5}
a.l_1=\deg(\phi_{C,\cE_C}^*a)=\deg c_1\left(\pi_{C!}\left(\cE_C\otimes\pi_S^*G\right)\right),
\end{equation}
where $\varphi_{C,\cE_C}\colon l_1\to M_v(S)$ is the classifying morphism (cf.\ Remark~\ref{rmk:modular curves}).
Using the Grothendieck--Riemann--Roch formula, we get
$$a.l_1=\pi_{C*}\left[\ch(\cE_C)\ch(\pi_S^*G)\td_{\pi_C}\right]_3=\pi_{C*}\left[\ch(\cE_C)\pi_S^*(a^\vee\sqrt{\td_S})\right]_3,$$
where the last equality follows from the fact that $\td_{\pi_C}=\pi_S^*\td_S$. 
Since $a=(-1,H,0)$, then $a^\vee=(-1,-H,0)$ and so
$$ \pi_S^*(a^\vee\sqrt{\td_S})=(-1,-[H\times C],-[p_0\times C],0),$$
where the square brackets indicate the class in cohomology of the corresponding cycle.
To compute $\ch(\cE_C)$, we use the Grothendieck--Riemann--Roch formula again:
$$\ch(\cE_C)=(i\times\id)_*\left(\ch\left(\cO_{C\times C}\left(\Delta-p_0\times C\right)\right)\td_{i\times\id}\right).$$
Now, by a direct computation,
$$\ch(\cO_{C\times C}(\Delta-p_0\times C))=(1,[\Delta]-[p_0\times C],-5)$$
and
$$ \td_{i\times\id}=\left(1,-\frac{1}{2}[K_C\times C],0\right),$$
where $K_C$ is the canonical divisor of $C$.
Putting all together we eventually get
$$ \ch(\cE_C)=\left(0,[C\times C],[(i\times\id)(\Delta)]-[p_0\times C]-\frac{1}{2}[K_C\times C],-9\right)$$
and hence 
$$ a.l_1 =\pi_{C*}\left[\ch(\cE_C)\pi_S^*(a^\vee\sqrt{\td_S})\right]_3= $$
$$ =-[C\times C].[p_0\times C]-[H\times C].([(i\times\id)(\Delta)]-[p_0\times C]-\frac{1}{2}[K_C\times C])+9=$$
$$ =0-4+9=5.$$
The computation of $b.l_1$ is similar and we leave it to the reader. 

The intersections $a.l_2$ and $b.l_2$ are also computed with the same technique. We only recall here the computation of $\ch(\cE_L)$, and leave to the reader the conclusion of the proof.

First of all, recall that $\cE_L=j_*\cO_{\cC}$, where $j\colon\cC\to S\times L$ is the natural inclusion. Notice that $\cC\cong \operatorname{Bl}_{q_1,\cdots,q_8}(S)$, where $q_1,\cdots,q_8$ are the base points of the pencil $L$ (cf.\ Lemma~\ref{lemma:pencil}). Then by the Grothendieck--Riemann--Roch formula we have
$$\ch(\cE_L)=j_*(\operatorname{td}_j),$$
and from the normal bundle sequence we get
$$ \operatorname{td}_j=\td(N_{\cC/S\times L})^{-1}=\left(1,-\frac{1}{2}c_1(N_{\cC/S\times L}),\frac{1}{6}c_1(N_{\cC/S\times L})^2\right).$$
Therefore one needs to compute $c_1(N_{\cC/S\times L})$. Using the adjunction formula on $\cC\subset S\times L$ and the isomorphism $\cC\cong\operatorname{Bl}_{q_1,\cdots,q_8}(S)$, we get
$$c_1(N_{\cC/S\times L})=K_{\cC}-K_{S\times L}|_{\cC}=E_1+\cdots+E_8-\pi_L^*K_L.$$
Here $E_i$ is the exceptional divisor in $\cC$ corresponding to the point $q_i$ and $\pi_L$ is the restriction to $\cC$ of the natural projection $S\times L\to L$.

Putting everything together we eventually have
$$\ch(\cE_L)=\left(0,[\cC],-\frac{E_1+\cdots+E_8-\pi_L^*K_L}{2},4\right).$$
\end{proof}

\begin{prop}\label{prop:b_V}
The class $b_V$ degenerates to the class $\tilde{b}_S=b$. 
\end{prop}
\begin{proof}
We have already remarked that $b_V$ degenerates to $\tilde{b}_S=\pi^*b_S$, so the claim follows once we prove that $b_S=b\in v^\perp$.

Since $l_1$ is concentrated in a fibre, by the projection formula $b_S.l_1=0$.
On the other hand, since $l_2$ is a line inside the zero section, we easily get $b_S.l_2=1$.
The claim follows.
\end{proof}

According to the strategy outlined before, in order to compute the class of $\widetilde{T}_S$, we first compute the class of $T_S$ in the given basis of $(v^\perp)^{1,1}$.
\begin{prop}\label{prop:Theta}
$T_S=a-2b$.
\end{prop}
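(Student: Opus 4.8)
The goal is to express $T_S$ in the basis $\langle a,b\rangle$ of $v^\perp$, where $v=(0,2H,-4)$, with $a=(-1,H,0)$ and $b=(0,0,1)$. The strategy is exactly the one set up in the preceding paragraphs: compute the two intersection numbers $T_S.l_1$ and $T_S.l_2$, and then invert the intersection matrix using the already-established values from Lemma~\ref{lemma:some intersection}. Writing $T_S=\lambda a+\mu b$, the pairings give a linear system
\begin{equation*}
T_S.l_1=5\lambda,\qquad T_S.l_2=-\lambda+\mu,
\end{equation*}
using $a.l_1=5$, $b.l_1=0$, $a.l_2=-1$, $b.l_2=1$. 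Thus once I know the two left-hand sides, $\lambda$ and $\mu$ are forced. To land on $T_S=a-2b$, I expect to find $T_S.l_1=5$ and $T_S.l_2=-3$.

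First I would compute $T_S.l_2$, which should be the easier of the two. By Remark~\ref{rmk:zero section}, the horizontal curve $l_2$ lies in the zero section and avoids the singular locus, so $\widetilde{T}_S.l_2=T_S.l_2$ and the intersection can be computed directly on $M_v(S)$. Geometrically $T_S$ is the theta divisor $T_U=\operatorname{im}(f_S)$, the image of the difference map on $C^{(2)}\times C^{(2)}$, compactified; its fibrewise restriction is the classical theta divisor on each Jacobian $J_C$, whose class is $5$ times a point in the principal polarisation normalisation. The intersection of $l_2$ (a line in the zero section, transverse to the fibration) with the relative theta divisor should therefore be computed by restricting $\Theta$ to the zero section and pairing with the $\PP^1$, giving the expected value.

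For $T_S.l_1$, the vertical curve $l_1=(C,\cE_C)$ sits inside a single Jacobian fibre $J_C$, embedded via the Abel–Jacobi map. Here $T_S.l_1=\widetilde{T}_S.l_1$ and the number is the intersection of the Abel–Jacobi copy of $C$ with the theta divisor inside $J_C$. For a genus-$g$ curve this is $g$ by the Poincaré formula (the theta divisor meets $AJ(C)$ in $g$ points); with $g=5$ this gives $T_S.l_1=5$, and hence $\lambda=1$. I would justify this either by the same Grothendieck–Riemann–Roch computation sketched in the proof of Lemma~\ref{lemma:some intersection}—realising $\Theta$ as the determinant (theta) line bundle and computing $\deg\phi_{C,\cE_C}^*\Theta$—or more directly by the Poincaré formula for the number of intersection points of a curve and its theta divisor in the Jacobian.

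The main obstacle I anticipate is not the numerology but making the degeneration statement precise enough to transport the divisor class: one must check that $\Theta_V$ genuinely specialises to the strict transform $\widetilde{T}_S$ and that intersection numbers are preserved under this degeneration, so that computing on the central fibre computes the class of $\Theta_V$. The subtlety is whether the theta divisor picks up components supported on the exceptional locus $\widetilde{\Sigma}$ of the desingularisation—i.e.\ whether the closure $T_S$ in $M_v(S)$, which is Cartier by local factoriality, differs from the naive fibrewise theta by exceptional contributions. Because the curves $l_1,l_2$ are chosen to avoid or control the singular locus (Remark~\ref{rmk:zero section}), the pairings are insensitive to such contributions, which is precisely why this two-curve intersection method isolates the coefficients cleanly; verifying this insensitivity is where the real care is needed.
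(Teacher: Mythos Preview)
Your computation of $T_S.l_1=5$ via the Poincar\'e formula is exactly what the paper does, so that half is fine. The difficulty is entirely in the $b$-coefficient, and here your proposal has a real gap.

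You plan to compute $T_S.l_2$ directly by ``restricting $\Theta$ to the zero section and pairing with the $\PP^1$'', but this intersection is not proper: the zero section is contained in $T_S$. Indeed, for every smooth $C\in|2H|$ one has $\cO_C=\cO_C(D-D)$ in the image of the difference map $C^{(2)}\times C^{(2)}\to J_C$, so the origin of each fibre lies on the fibrewise theta divisor, and hence $l_2\subset T_S$. Your sentence ``giving the expected value'' therefore hides the whole problem: the number $T_S.l_2=-3$ is the degree of a line bundle on $l_2$, not a count of transverse intersection points, and nothing you have written produces that degree. (Your phrase ``whose class is $5$ times a point'' is also unclear and does not lead to a computation.)

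The paper sidesteps this entirely. It does \emph{not} compute $T_S.l_2$; instead, after getting $\lambda=1$ from Poincar\'e, it writes $T_S=a+(d+1)b$ with $d=T_S.l_2$ unknown, and then determines $d$ from the self-intersection $T_S^2=-2$. That equality is imported from Lemma~\ref{lemma:theta exceptional} (Sacc\`a's result that $\Theta_V$ is prime exceptional of square $-2$), together with the fact that the degeneration preserves the Beauville--Bogomolov--Fujiki square. Since $a^2=2$, $b^2=0$, $(a,b)=1$, one gets $-2=2+2(d+1)$, hence $d+1=-2$ and $T_S=a-2b$. So the key input you are missing is precisely this external constraint on $T_S^2$, which replaces the hard intersection $T_S.l_2$ you were unable to carry out.
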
 
\begin{proof}
Since $T_S$ is a theta divisor, by Poincar\'e formula it follows that 
$T_S.l_1=5$. 
By taking $d$ to be the intersection number $d=T_S.l_2$ and using 
Lemma~\ref{lemma:some intersection}, we see that $T_S=a+(d+1)b$.
Finally, it follows from Lemma~\ref{lemma:theta exceptional} that $-2=T^2_S=2(d+2)$, so that $d=-3$ and the claim follows.
\end{proof}
The final step consists in computing the multiplicity of the singular locus $\Sigma_v$ of $M_v(S)$ in $T_S$.
\begin{prop}\label{prop:multiplicity T_S}
$\Sigma_v$ is not contained in $T_S$. 
\end{prop}
The proof of the proposition relies on the following remark.
\begin{lemma}\label{lemma:0=4}
Let $w=(0,2H,0)$. Then $M_v(S)$ is isomorphic to $M_w(S)$ and the isomorphism preserves the singular locus.

Moreover, the theta divisor $T_S$ is isomorphic to the closure in $M_w(S)$ of the relative Brill--Noether divisor $\mathcal{W}_4^0$ of the family of smooth curves in $|2H|$.
\end{lemma}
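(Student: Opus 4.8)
The plan is to produce the isomorphism by an autoequivalence of $\D^b(S)$, namely tensoring with the polarisation. Set $\Phi=(-)\otimes\cO_S(H)$. On Mukai vectors $\Phi$ acts as multiplication by $\ch(\cO_S(H))=(1,H,1)$, and since $H^2=2$ a direct computation gives $(0,2H,-4)\cdot(1,H,1)=(0,\,2H,\,2H^2-4)=(0,2H,0)$, so $\Phi$ carries $v$ to $w$. Because $\Phi$ is an exact autoequivalence of $\operatorname{Coh}(S)$ that transforms the reduced Hilbert polynomial by the substitution $m\mapsto m+1$, it preserves $H$-(semi)stability; in particular $H$ stays $w$-generic and $\Phi$ induces an isomorphism $M_v(S)\cong M_w(S)$. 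As an equivalence of categories it sends stable sheaves to stable sheaves and strictly semistable S-equivalence classes to strictly semistable ones, hence it identifies the two strictly semistable loci, which are the respective singular loci. This gives the first assertion.

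For the second assertion I would work over the open locus $U\subset|2H|$ of smooth curves, above which $M_v(S)$ and $M_w(S)$ restrict to the relative Jacobians with fibres $\Pic^0(C)$ and $\Pic^4(C)$. Over a smooth $C\in|2H|$ the functor $\Phi$ is the translation $L\mapsto L\otimes\cO_C(H|_C)$. The key point is that $\kappa:=H|_C$ is a theta characteristic: adjunction gives $K_C=(K_S+C)|_C=2\,H|_C$, using $K_S=0$ and $C\in|2H|$, so that $2\kappa=K_C$ and $\deg\kappa=g-1=4$; writing $f\colon S\to\PP^2$ for the double cover and $g^1_2$ for the hyperelliptic pencil $f|_C$, one has moreover $\kappa=2g^1_2$.

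Now $\mathcal{W}_4^0$ has fibre $\{M\in\Pic^4(C): h^0(M)>0\}$, which for $g=5$ is precisely the Riemann theta divisor $W^0_{g-1}\subset\Pic^{g-1}(C)$. On the other hand, by \cite[Theorem~13.4]{ClemensGriffiths:CubicThreefolds} and its hyperelliptic specialisation (\cite{Collino:TheFundamentalGroup}), the fibre of $T_S$ over $C$ is a theta divisor of $\Pic^0(C)$, symmetric under $L\mapsto L^{-1}$ since it is the image of a difference map. Translation by the theta characteristic $\kappa$ carries symmetric theta divisors of $\Pic^0(C)$ onto $W^0_{g-1}$, and I would verify that the symmetric theta divisor cut out by $T_S$ is exactly the one associated to $\kappa=\tfrac12 K_C$; granting this, $\Phi$ maps the fibre of $T_S$ onto $\{h^0>0\}=\mathcal{W}_4^0|_C$. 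As $T_S$ and the closure of $\mathcal{W}_4^0$ are prime divisors agreeing over the dense open $U$, they coincide after taking closures, which is the assertion.

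The main obstacle is the verification in the previous step: identifying which of the $2^{2g}$ symmetric theta divisors is produced by the difference map, i.e.\ that it is the one attached to the specific theta characteristic $\tfrac12 K_C=2g^1_2$ rather than a $2$-torsion translate of it. This is where Collino's explicit degeneration of the Abel--Jacobi map, and the normalisation of base points in it, must be invoked; the remaining steps are formal. I note that only the homology class of $T_S$ enters the intersection computation of Proposition~\ref{prop:Theta}, so this delicate identification is genuinely needed only for the non-containment statement of Proposition~\ref{prop:multiplicity T_S}, where the divisor itself, and not merely its class, must be controlled near the singular locus.
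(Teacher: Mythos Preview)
Your first paragraph is correct and essentially identical to the paper's argument (which invokes Lemma~\ref{lemma:tensor H}).

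For the second part you have the right framework but you stop exactly where the paper's proof becomes elementary. The ``main obstacle'' you flag---identifying which symmetric theta divisor on $\Pic^0(C)$ arises from the difference map---is bypassed in the paper by a direct calculation that avoids the theory of theta characteristics entirely. The point is simply this: given $\cO_C(p_1+p_2-q_1-q_2)\in T_U|_C$, one checks by hand that $\cO_C(p_1+p_2-q_1-q_2)\otimes H|_C$ is effective. Since $H|_C=2g_1^2$, the linear system $|H|_C|$ consists of sums $D_1+D_2$ with $D_i\in|g_1^2|$, and because $|g_1^2|$ is a pencil there is a member through any prescribed point; choosing $D_1\ni q_1$ and $D_2\ni q_2$ gives $H|_C\sim q_1+q_2+r_1+r_2$ for suitable residual points $r_1,r_2$, whence
\[
\cO_C(p_1+p_2-q_1-q_2)\otimes H|_C \;\cong\; \cO_C(p_1+p_2+r_1+r_2),
\]
which manifestly has a section. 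This shows $\Phi(T_U)\subset \mathcal{W}_4^0$ over $U$, and since both are irreducible theta divisors the inclusion is an equality; taking closures finishes. So rather than invoking Collino's normalisation of base points to pin down the theta characteristic, you can settle the identification in one line using the hyperelliptic structure you already wrote down.
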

\begin{proof}
By Lemma~\ref{lemma:tensor H}, tensoring by $H$ gives an isomorphism $\phi\colon M_v(S)\to M_w(S)$ that preserves the singular locus. More precisely, any smooth curve $C$ in $|2H|$ is hyperelliptic and has a unique $g^1_2$. By definition, the restriction of $H$ to $C$ coincides with $2g^1_2$, hence the morphism $\phi$ acts fibrewise by sending a line bundle $L$ on a smooth curve $C$ to the line bundle $L\otimes2g^1_2$ on $C$.

Now, the moduli space $M_w(S)$ contains the Picard variety fibration $Pic^4_{U}$, where $U\subset|2H|$ is the locus of smooth curves. If we denote by $\cC_U$ the family of curves parametrised by $U$, the Brill--Noether variety $\mathcal{W}_4^0\subset Pic^4_{U}$ coincides with the image of the morphism
$$ g_S\colon\cC_U^{(4)}\longrightarrow Pic^4_{U},$$
given by $g_S((p_1,p_2,p_3,p_4);C)=i_*\cO_C(p_1+p_2+p_3+p_4)$, where $i\colon C\to S$ is the embedding.
Let us call $E_U$ the image of $g_S$. We claim that $E_U=\phi(T_U)$ (recall that $T_U$ was defined as the image of the map (\ref{eqn:Collino Fano})).

Let $C\in U$ be a smooth curve in $|2H|$ and consider the line bundle $\cO_C(p_1+p_2-q_1-q_2)$. Any divisor in the equivalence class of $H$ cuts $C$ in four points (i.e.\ a pair of $g_1^2$), and up to change the linear equivalence representative, we can always suppose that two of these points are $q_1$ and $q_2$. It follows that $i_*\cO_C(p_1+p_2-q_1-q_2)\otimes H=i_*\cO_C(p_1+p_2+r_1+r_2)$, where $r_1$ and $r_2$ are the two residue points of the intersection of $C$ with $H$.
\end{proof}
\begin{proof}[Proof of Proposition~\ref{prop:multiplicity T_S}]
By Lemma~\ref{lemma:0=4} above, it is enough to prove that $\Sigma_w$ is not contained in the closure $\overline{\mathcal{W}}_4^0$ of $\mathcal{W}_4^0$; in particular, it is enough to prove that a general point of $\Sigma_w$ does not belong to $\overline{\mathcal{W}}_4^0$. Recall from \cite{LehnSorger:Singularity} (see also \cite{O'Grady:10dimPublished}) that a general point $x\in\Sigma_w$ is an S-equivalence class $[F_1\oplus F_2]$, where $F_j=i_{j*}L_j$, $L_j\in\Pic^{1}(C_j)$ is general and $C_j\in|H|$ is a smooth curve. The support of $x$ is the curve $C_0=C_1\cup C_2\in|2H|$. If $p\colon M_w(S)\to|2H|$ is the map that associates to any sheaf its Fitting support, then we want to show that $x$ does not belong to the intersection $\overline{\mathcal{W}}_4^0\cap p^{-1}(C_0)$. 

First of all, let us describe the fibre $\overline{\mathcal{W}}_4^0\cap p^{-1}(C_0)$. By construction, the fibre $p^{-1}(C_0)$ coincides with the generalised Jacobian $\bar{J}_{C_0}$, as constructed in \cite{Caporaso:Compactification} (see also item $(4)$ of \cite[Lemma~2.3]{Rapagnetta:BeauvilleFormIHSM}). So the fibre $\overline{\mathcal{W}}_4^0\cap p^{-1}(C_0)$ can be understood classically as the locus of sheaves with sections, i.e.\ $F\in\overline{\mathcal{W}}_4^0\cap p^{-1}(C_0)$ if and only if $h^0(F)>0$ (\cite[Theorem~5.3]{Alexeev:CompactifiedJacobians}).

Now, in any family of semistable sheaves, the locus of sheaves $F$ such that $h^0(F)>0$ is closed on the base; therefore the locus in $M_w(S)$ of S-equivalence classes where at least one representative has sections is still closed. Thanks to this remark, we reduce to prove that any sheaf in the S-equivalence class of a general point $x\in\Sigma_w(S)$ has no sections. 


Let $x=[i_{1*}L_1\oplus i_{2*}L_2]$ be a general point in $\Sigma_w$. Then $\oH^0(L_1)=0=\oH^0(L_2)$: in fact $L_j\in\Pic^1(C_j)$ is general, hence not effective, since $C_j$ has genus $2$. Strictly semistable sheaves $F$ such that $[F]=x$ are described (up to isomorphism) as kernels of the surjections (cf.\ proof of \cite[Lemma~2.3]{Rapagnetta:BeauvilleFormIHSM})
$$ i_{1*}L_1(n_1+n_2)\oplus i_{2*}L_2\longrightarrow\!\!\!\!\!\rightarrow Q.$$
If $F$ is any such a kernel, computing the long exact sequence in cohomology we get that $\oH^0(F)$ is described as the sub-vector space of $\oH^0(L_1(n_1+n_2))\oplus\oH^0(L_2)\cong\oH^0(L_1(n_1+n_2))$ of sections vanishing at the points $n_1$ and $n_2$ of $C_1$. This sub-vector space coincides with the cohomology group $\oH^0(L_1)$, which is zero by assumption. It follows that $\oH^0(F)=0$, therefore no representatives of $x$ belong to $\overline{\mathcal{W}}_4^0\cap p^{-1}(C_0)$ and the claim is proved.
\end{proof}
\begin{cor}
The theta divisor $\Theta_V$ degenerates to $\widetilde{T}_S=\pi^*T_S=a-2b$.
\end{cor}
\begin{proof}
We have already remarked that $\Theta_V$ degenerates to $\widetilde{T}_S$. 
By Proposition~\ref{prop:multiplicity T_S}, the multiplicity of $\Sigma_v$ in $T_S$ is $0$, so that $\widetilde{T}_S=\pi^*T_S$ and the claim follows from Proposition~\ref{prop:Theta}.
\end{proof}

\subsubsection{The hyperelliptic birational map}\label{subsection:hyperelliptic}
Let $U\subset|2H|$ be the locus of smooth curves and $Pic^2_U$ the Picard variety fibration over $U$. Since all the curves parametrised by $U$ are hyperelliptic there exists a canonical isomorphism
$Pic^0_U\cong Pic^2_U$ given fibrewise by tensoring with the unique $g^1_2$ on each fibre.
Recalling that $Pic^0_U\subset M_{(0,2H,-4)}$ and that $Pic^2_U\subset M_{(0,2H,-2)}$ (see Example~\ref{exe:torsion sheaves}), this also gives a canonical birational map
\begin{equation*}
\psi\colon M_{(0,2H,-2)}\dashrightarrow M_{(0,2H,-4)}.
\end{equation*}
We denote by $\widetilde{\psi}$ the induced birational maps on the symplectic desingularisations
of these spaces.
Only for the rest of this section, we use the notation $v_2=(0,2H,-2)$ and $v_0=(0,2H,-4)$.
Recall that $M_{v_0}(S)$ is locally factorial and so 
$$\Pic(\widetilde{M}_{v_0}(S))\cong(v_0^\perp)^{1,1}\oplus\ZZ\sigma_0=\langle a,b,\sigma_0\rangle.$$
On the other hand, $M_{v_2}(S)$ is $2$-factorial and 
$$\Pic(\widetilde{M}_{v_2}(S))=\Gamma_{v_2}^{1,1}=$$
$$
=\left\{\left(w_{m,n},k\frac{\sigma_2}{2}\right)\mid 
(w_{m,n},v_2^\perp)\subset\ZZ, k\in2\ZZ\Leftrightarrow m,n\in2\ZZ\right\},
$$
where $w_{m,n}=\left(-m,\frac{m}{2}H,\frac{n}{2}\right)$. Both these statements follow from Theorem~\ref{thm:factoriality} and the proof of \cite[Proposition~4.1]{PeregoRapagnetta:Factoriality}.
In the following we write $(v_2^\perp)^{1,1}=\langle a_2,b_2\rangle$,
where $a_2=(-2,H,0)$ and $b_2=(0,0,1)$. Finally, recall that $\sigma_0^2=-6=\sigma_2^2$.

Since $\widetilde{\psi}$ is a birational isomorphism between irreducible holomorphic symplectic manifolds, its pullback induces a Hodge isometry (\cite{Huybrechts:BasicResults})
$$\widetilde{\psi}^*\colon\oH^2(\widetilde{M}_{v_0}(S),\ZZ)\longrightarrow\oH^2(\widetilde{M}_{v_2}(S),\ZZ).$$
The goal of this section is to write down explicitly the induced isometry
$$\widetilde{\psi}^*\colon\Pic(\widetilde{M}_{v_0}(S))=\Gamma_{v_0}^{1,1}\to\Gamma_{v_2}^{1,1}=\Pic(\widetilde{M}_{v_2}(S))$$
with respect to the generators fixed above. 
This goal is achieved in Proposition~\ref{prop:psi}, but before that we need to investigate some features about the space $\widetilde{M}_{v_2}(S)$ and the map $\widetilde{\psi}$.
\medskip

Recall from Example~\ref{exe:torsion sheaves} that $\widetilde{M}_{v_2}(S)$ is also a Lagrangian fibration, this structure being given by the composition of the desingularisation map $\pi_2\colon\widetilde{M}_{v_2}(S)\to M_{v_2}(S)$ and the Fitting support map $p\colon M_{v_2}(S)\to|2H|$. The class of the fibration is $\tilde{b}_{v_2}=(p\circ\pi_2)^*\cO_{\PP^5}(1)$.
\begin{lemma}\label{lemma:b_2}
$\tilde{b}_{v_2}=b_2$.
\end{lemma}
\begin{proof}
The proof is the same as the proof of Proposition~\ref{prop:b_V}, using computations similar to those in Lemma~\ref{lemma:some intersection}, where instead of the curves $l_1$ and $l_2$ one uses the curves $l_1'$ and $l_2'$ that we now define.

The curve $l_1'$ is the image of the vertical curve $l_1$ via $\psi^{-1}$. In fact notice that $l_1$ is contained in $Pic^0_U\subset M_{v_0}(S)$, so the morphism $\psi^{-1}$ is well-defined on it. In particular, $l_1'=(C,\cE_C')$, where (we use the same notations used to define the curve $l_1$)
$$ \cE_{C}'=(i\times id)_*\cO_{C\times C}\left(\Delta-p_0\times C+K_C\times C\right).$$

To construct the curve $l_2'$ we use again the pencil $L\subset|2H|$ of Lemma~\ref{lemma:pencil}. Let $\cC$ be the family of curves in $S$ parametrised by $L$. Recall that $\cC$ is the pullback of a family $\cC'$ of conics in $\PP^2$. If $Z'\colon L\to\cC'$ is a section, then the pullback via the $2\colon1$ cover $S\to\PP^2$ is a bi-section $Z\colon L\to\cC$. For every $p\in L$ such that $\cC_p$ is smooth, $Z_p$ represents the $g^1_2$. We can choose $Z$ in such a way that $Z_p$ is composed by two distinct points (this is always true, after possibly a finite base change); in particular $Z_{p_i}$ does not pass through the node of $\cC_{p_i}$, for $i=1,2,3$. Then we define the family of sheaves $j_*\cO_{\cC}(Z)$, where $j\colon\cC\to S\times L$ is the embedding. This is a well defined family of semistable sheaves on $S$ parametrised by $L$ and $l_2'$ is the curve associated to the pair $(L,j_*\cO_{\cC}(Z))$ (cf.\ Remark~\ref{rmk:modular curves}).

Now, it is clear that $\tilde{b}_{v_2}.l_1'=0$ and $\tilde{b}_{v_2}.l_2'=1$. Let us compute $a_2.l_1'$, $b_2.l_1'$ and $b_2.l_2'$. These intersection numbers are computed as in Lemma~\ref{lemma:some intersection} and therefore we will only briefly comment on them.

The Chern character of $\cE_C'$ is as follows (use notations as in the proof of Lemma~\ref{lemma:some intersection})
$$ \ch(\cE_C')=\left(0,[C\times C],[(i\times\id)(\Delta)]-[p_0\times C]+\frac{1}{2}[K_C\times C],-4\right),$$
so that $a_2.l_1'=4$ and $b_2.l_1'=0$. It follows that $\tilde{b}_{v_2}$ is a multiple of $b_2$.

Finally, the intersection $b_2.l_2'$ is easier to compute. In fact, since $b_2=(0,0,1)$, we do not need to know the whole shape of $\ch(j_*\cO_{\cC}(Z))$, but only its first Chern class, which is equal to $[\cC]$. It follows that $b_2.l_2'=[p_0\times C].[\cC]=1$ and so $\tilde{b}_{v_2}=b_2$.
\end{proof}
\begin{rmk}\label{rmk:zero-two-section}
By a direct check, the sheaves $i_{p_{i*}}\cO_{\cC_{p_i}}(Z_{p_i})$ are strictly semistable, and the same holds if we choose another bi-section $Z$ such that $Z_p$ represents the $g^1_2$ of the smooth curve $\cC_p$. Moreover, by a direct computation it can be shown that the S-equivalence class of $\cO_{\cC_{p_i}}(Z_{p_i})$ is independent of the choice of $Z$, i.e.\ the morphism $\psi^{-1}$ is well-defined on the generic point of the fibre over a reduced and reducible curve.

As a consequence we get that $l_2'=\psi^{-1}(l_2)$ and that $l_2'\cap\Sigma_{v_2}=\{p_1,p_2,p_3\}$. In particular, the morphism $\psi^{-1}$ does not preserve the singular locus of $M_{v_0}(S)$.
\end{rmk}

\begin{prop}\label{prop:psi}
$\widetilde{\psi}^*\colon\Pic(\widetilde{M}_{v_0}(S))\to\Pic(\widetilde{M}_{v_2}(S))$
is the isometry 
$$\begin{array}{rcl}
a & \longmapsto & \frac{1}{2}a_2+\frac{3}{2}b_2-\frac{1}{2}\sigma_2\\
b & \longmapsto & b_2 \\
\sigma_0 & \longmapsto & 3b_2-\sigma_2.
\end{array}$$
\end{prop}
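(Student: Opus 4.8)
The plan is to exploit that $\widetilde{\psi}$ is a birational map between irreducible holomorphic symplectic manifolds: by the result of Huybrechts \cite{Huybrechts:BasicResults} it is an isomorphism in codimension one, so $\widetilde{\psi}^*$ is a well-defined Hodge isometry of the second cohomology lattices, and in particular an isometry $\Pic(\widetilde{M}_{v_0}(S))\to\Pic(\widetilde{M}_{v_2}(S))$. It therefore suffices to compute the image of each of the three generators $a,b,\sigma$. The class $b$ is the class of the Lagrangian fibration $M_{v_0}(S)\to|2H|$; since $\psi$ is fibrewise the hyperelliptic identification $\cJ^2_{\cC_U}\cong\cJ_{\cC_U}$, hence commutes with the projection to $|2H|$, and since $b_2$ is the fibration class of $M_{v_2}(S)$ (Remark~\ref{rmk:zero-two-section}), we get $\widetilde{\psi}^*b=b_2$ at once.

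Next I would record the relevant intersection data. For the Beauville--Bogomolov--Fujiki form one has $a^2=a_2^2=2$, $b^2=b_2^2=0$, $\sigma^2=\sigma_2^2=-6$, together with $(a,b)=1$, $(a_2,b_2)=2$, and the orthogonality of $\sigma$ (resp.\ $\sigma_2$) to the algebraic part of $v_0^\perp$ (resp.\ $v_2^\perp$). Writing $\widetilde{\psi}^*a=\alpha a_2+\beta b_2+\gamma\sigma_2$ and $\widetilde{\psi}^*\sigma=\delta a_2+\epsilon b_2+\zeta\sigma_2$, the isometry condition applied to the pairings against $b_2$, to all self-intersections, and to $(a,\sigma)=0$ forces $\alpha=\tfrac12$, $\delta=0$, $\zeta=\pm1$, $\epsilon=6\gamma\zeta$ and $2\beta-6\gamma^2=\tfrac32$; requiring moreover that the images lie in, and generate, the two-elementary overlattice $\Gamma_{v_2}^{1,1}$ confines $\gamma$ to an odd half-integer. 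These purely lattice-theoretic constraints do not isolate a single solution, so genuine geometric input is needed to fix $\gamma$ and the signs.

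That input comes from the two test curves of Section~\ref{subsection:degeneration}, transported to $M_{v_2}(S)$ by $\psi^{-1}$: the image $l_1'$ of the vertical curve and $l_2'=\psi^{-1}(l_2)$ of the horizontal one. The key point is that, while $l_2$ avoids $\Sigma_{v_0}$ (Remark~\ref{rmk:zero section}), the curve $l_2'$ meets $\Sigma_{v_2}$ in exactly the three reducible members of the pencil $L$ of Lemma~\ref{lemma:pencil} (Remark~\ref{rmk:zero-two-section}), so that $\sigma_2.\widetilde{l_2'}=3$ and $b_2.\widetilde{l_2'}=1$, whereas $\sigma.\widetilde{l_2}=0$. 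Comparing $\widetilde{\psi}^*\sigma.\widetilde{l_2'}=\sigma.\widetilde{l_2}=0$ gives $\epsilon+3\zeta=0$, which combined with $\epsilon=6\gamma\zeta$ forces $\gamma=-\tfrac12$, and then the norm relation gives $\beta=\tfrac32$; the sign $\zeta=-1$ (so $\epsilon=3$) is fixed by effectivity, since $\widetilde{\psi}^*\widetilde{\Sigma}_{v_0}$ is the class of the strict transform of the prime exceptional divisor $\widetilde{\Sigma}_{v_0}$. The value of $\widetilde{\psi}^*a$ is then corroborated geometrically by transporting the theta divisor $T_S=a-2b$ of Proposition~\ref{prop:Theta}, whose image must be the corresponding Brill--Noether divisor on $M_{v_2}(S)$ (Lemma~\ref{lemma:0=4}); computing $\widetilde{\psi}^*a.\widetilde{l_i'}$ against $a.\widetilde{l_i}$ from Lemma~\ref{lemma:some intersection} then matches the stated formulas.

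I expect the main obstacle to be precisely the fact, emphasised in Remark~\ref{rmk:zero-two-section}, that $\psi$ does \emph{not} preserve the singular loci: the exceptional divisor $\widetilde{\Sigma}_{v_0}$ is not carried to $\widetilde{\Sigma}_{v_2}$, and identifying which prime divisor on $\widetilde{M}_{v_2}(S)$ corresponds to it requires analysing, as in the proof of Proposition~\ref{prop:multiplicity T_S}, the strictly semistable sheaves supported on the reducible curves $C_1\cup C_2\in|2H|$ and how the hyperelliptic identification acts on them. It is this sheaf-theoretic bookkeeping over the three reducible members of $L$ that produces the coefficient $3$, resolves the ambiguity left by the lattice constraints, and thereby determines $\widetilde{\psi}^*$ uniquely.
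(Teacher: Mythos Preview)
Your approach is essentially the paper's: both identify $b\mapsto b_2$ from the fibration, use the isometry constraints to reduce to a few parameters, and then exploit the key geometric fact that $l_2'=\psi^{-1}(l_2)$ meets $\Sigma_{v_2}$ in the three reducible members of the pencil (so $\sigma_2.l_2'=3$) to determine the $b_2$-coefficient. The only substantive difference is the sign $\zeta=-1$: the paper does not argue by effectivity but instead intersects $\widetilde{\psi}^*\sigma$ with the ruling line $\delta=\pi_2^{-1}(x)$ over a point $x\in\Sigma_{v_2}$ with $\psi(x)$ smooth, using $\sigma_2.\delta=-2$, $b_2.\delta=0$ and positivity of the (transverse) intersection $\widetilde{\psi}^*(\sigma).\delta$; your effectivity claim is plausible but, as stated, would still need the check that $\sigma_2-3b_2$ is not the class of an effective divisor.
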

\begin{proof}
First, we remark that the morphism $\widetilde{\psi}$ is defined fibrewise, so that the class of the fibrations are preserved, i.e.\ $\widetilde{\psi}^*\tilde{b}_{v_0}=\tilde{b}_{v_2}$. Then, by Proposition~\ref{prop:b_V} and Lemma~\ref{lemma:b_2}, we get 
$$\widetilde{\psi}^*(b)=b_2.$$
Writing $\widetilde{\psi}^*(a)=(w_{m,n},\frac{k}{2}\sigma_2)$, from the relations 
$$\widetilde{\psi}^*(a)^2=2\qquad\mbox{ and }\qquad (\widetilde{\psi}^*(a),b_2)=(\widetilde{\psi}^*(a),\widetilde{\psi}^*(b))=1$$
it follows that 
$$ \widetilde{\psi}^*(a)=\frac{1}{2}a_2+\frac{n}{2}b_2+\frac{k}{2}\sigma_2=\left(\left(-1,\frac{1}{2}H,\frac{n}{2}\right),k\frac{\sigma_2}{2}\right)$$
where $n,k\in\ZZ$ are related by the equation
$$ n-\frac{3}{2}k^2-\frac{3}{2}=0.$$

Let us now consider the inverse isometry $(\widetilde{\psi}^*)^{-1}$, and write $(\widetilde{\psi}^*)^{-1}(\sigma_2)=xa+yb+z\sigma_0$. From the relation
$$
\left((\widetilde{\psi}^*)^{-1}(\sigma_2),b\right)=\left((\widetilde{\psi}^*)^{-1}(\sigma_2),(\widetilde{\psi}^*)^{-1}(b_2)\right)=0
$$
we get that $(\widetilde{\psi}^*)^{-1}(\sigma_2)=yb+z\sigma_0$, and from the relation $(\widetilde{\psi}^*)^{-1}(\sigma_2)^2=-6$ we get that $z=\pm1$. Applying $\widetilde{\psi}^*$ to this equation, we get 
\begin{equation}\label{eqn:boh}
\widetilde{\psi}^*(\sigma_0)=z\left(\sigma_2-yb_2\right).
\end{equation}

Moreover, 
$$ 0=(\widetilde{\psi}^*(a),\widetilde{\psi}^*(\sigma_0))=-zy-3zk,$$
which implies that $y=-3k$.

Notice that $y=3$. In fact, let us intersect (\ref{eqn:boh}) with $l_2'$: since $l_2'$ is
a translate of the zero section on $M_{v_0}(S)$ (see Remark~\ref{rmk:zero-two-section}) and does not intersect the singular locus there,
it follows that $\widetilde{\psi}^*(\sigma_0).l_2'=0$; on the other hand $b_2.l_2'=b_0.l_2=1$
(cf.\ Lemma~\ref{lemma:some intersection} and proof of Lemma~\ref{lemma:b_2}) and $\sigma_2.l_2'=3$ as it follows from 
Remark~\ref{rmk:zero-two-section} (in fact the last intersection happens in the smooth locus of 
$\Sigma_2$ and, by generality of $l_2'$, it is transversal).

Finally, let us show that $z=-1$. By Remark~\ref{rmk:zero-two-section} there exists a point $x\in\Sigma_{v_2}$ such that $\psi(x)\in M_{v_0}$ is smooth. Consider the line $\delta=\pi_2^{-1}(x)$, where $\pi_2\colon\widetilde{M}_{v_2}\to M_{v_2}$ is the desingularisation map. By a direct computation we have that $\sigma_2.\delta=-2$ and $b_2.\delta=0$. On the other hand, the intersection $\widetilde{\psi}^*(\sigma_0).\delta$ is transverse, since $\psi(x)$ is smooth, therefore it is positive. Intersecting the relation (\ref{eqn:boh}) with $\delta$ shows then that $z=-1$.
\end{proof}

\subsubsection{The O'Grady moduli space}\label{subsection:O'Grady}
Let $S$ be as usual a projective $K3$ surface with a polarisation $H$ of degree $2$ and let
$\D^b(S)$ be the derived category of coherent sheaves on $S$. Let $\Delta\subset S\times S$
be the diagonal and $F_{\Delta}\colon\D^b(S)\to\D^b(S)$ the Fourier--Mukai transform
with kernel the ideal sheaf $I_\Delta$.
As usual we denote by $M_S$ the O'Grady moduli space $M_{(2,0,-2)}(S)$.
\begin{prop}\label{prop:G}
The exact equivalence
$$\mathcal{G}\colon\D^b(S)\stackrel{-\otimes H}{\longrightarrow}\D^b(S)\stackrel{F_{\Delta}}{\longrightarrow}\D^b(S)\stackrel{-\otimes H^\vee}{\longrightarrow}\D^b(S)$$
induces a birational map 
$$\mathcal{G}\colon\widetilde{M}_{(0,2H,-2)}(S)\dashrightarrow\widetilde{M}_S.$$
Moreover, the induced map on the Picard lattices is
$$ \left(\left(-m,\frac{m}{2}H,\frac{n}{2}\right),k\frac{\sigma_2}{2}\right)\longmapsto
\frac{m+n}{2}H-n\widetilde{B}+\frac{k-n}{2}\widetilde{\Sigma}.$$
\end{prop}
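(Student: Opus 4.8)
The plan is to exploit the fact that $\mathcal{G}$ is an exact autoequivalence of $\D^b(S)$, so that it acts on the Mukai lattice $\widetilde{\oH}(S,\ZZ)$ by a Hodge isometry $\mathcal{G}^{\oH}$, and to read off the map on Picard lattices from this cohomological action. First I would compute $\mathcal{G}^{\oH}$ as the composition of the two line-bundle twists $-\otimes H$ and $-\otimes H^{\vee}$ (which act by multiplication by $e^{H}$ and $e^{-H}$ on the Mukai lattice) with the cohomological Fourier--Mukai transform associated to $I_\Delta$. A direct Grothendieck--Riemann--Roch computation then shows that $\mathcal{G}^{\oH}$ sends the Mukai vector $v_2=(0,2H,-2)$ to $v=(2,0,-2)$ and restricts to a Hodge isometry $v_2^{\perp}\to v^{\perp}$; on the algebraic part this assignment is what underlies the first two summands of the displayed formula.

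Next I would promote $\mathcal{G}$ to an actual birational map of moduli spaces. Since $\mathcal{G}$ is an autoequivalence carrying $v_2$ to $v$, it takes $H$-semistable sheaves with Mukai vector $v_2$ to Bridgeland-semistable objects with Mukai vector $v$ for a stability condition lying in the image of the large-volume (Gieseker) chamber; the corresponding moduli space of stable objects is then birational to $M_S=M_{(2,0,-2)}(S)$ by the analysis of the birational geometry of these singular moduli spaces (\cite{MeachanZhang:BirationalGeometry}). This yields the birational map $\mathcal{G}\colon M_{(0,2H,-2)}(S)\dashrightarrow M_S$, which lifts to the symplectic desingularisations $\widetilde{M}_{(0,2H,-2)}(S)\dashrightarrow\widetilde{M}_S$ because the resolutions are canonical, being blow-ups of the reduced singular loci. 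By Huybrechts (\cite{Huybrechts:BasicResults}) the induced map on second cohomology is a Hodge isometry, and on the image of $v^{\perp}$ under $\pi^{*}$ it agrees with the covariant cohomological transform $\mathcal{G}^{\oH}$ computed above.

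It then remains to pin down the remaining $\sigma$-direction, i.e.\ the coefficient of the exceptional divisor. Using the identifications $\oH^2(\widetilde{M}_{(0,2H,-2)}(S),\ZZ)\cong\Gamma_{v_2}$ and $\oH^2(\widetilde{M}_S,\ZZ)\cong\oH^2(S,\ZZ)\oplus\langle\widetilde{B},\widetilde{\Sigma}\rangle$ from Theorem~\ref{thm:big OG10} and Example~\ref{example:O'Grady moduli space}, I would argue that $\mathcal{G}$ matches the two strictly semistable loci, hence the two exceptional divisors, so that $\sigma_2$ is carried to $\sigma$; the precise integral coefficients are then forced by the integrality and primitivity constraints defining $\Gamma_{v_2}$, together with intersection numbers against the distinguished test curves, exactly as in the proof of Proposition~\ref{prop:psi}. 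Translating the resulting isometry of $\Gamma$-lattices through the explicit isometry of Example~\ref{example:O'Grady moduli space}, and normalising the overall sign in the identification $v^{\perp}\cong\oH^2(M_v(S))$ of Theorem~\ref{thm:big OG10}, produces the displayed formula.

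The main obstacle is the control of the $\sigma$- and $\widetilde{B}$-directions: unlike the classes in $v^{\perp}$, these are invisible to the cohomological Fourier--Mukai transform alone and require genuine geometric input. Concretely, one must verify that $\mathcal{G}$ is an isomorphism in codimension one and that it carries the locus of non-locally-free (resp.\ strictly semistable) sheaves on one side to the corresponding locus on the other, so that the transform on $v^{\perp}$ really extends to the stated map on the full Picard lattices. Tracking these two divisors, together with the sign conventions in the Hodge isometries of Theorem~\ref{thm:big OG10}, is where the essential work lies; everything else reduces to the routine bookkeeping of Step~1.
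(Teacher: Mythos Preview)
Your strategy diverges from the paper's at the key step of producing the birational map and tracking the exceptional divisor. The paper does not pass through Bridgeland stability or \cite{MeachanZhang:BirationalGeometry}; it simply factors $\mathcal{G}$ into its three constituents and treats each one separately. The twists by $\pm H$ are genuine isomorphisms of moduli spaces preserving the stability type (Lemma~\ref{lemma:tensor H}), so they carry exceptional divisor to exceptional divisor for free. For the middle step $F_\Delta$ the paper exhibits directly an open subset $\mathcal{J}^0\subset M_{(0,2H,2)}(S)$ on which $E\mapsto F_\Delta(E)^\vee$ is an isomorphism onto an open of $M_{(2,2H,0)}(S)$, by reducing to \cite[Proposition~4.1.2]{O'Grady:10dimPublished}; the same computation applied at a general point of $\Sigma$ shows the strictly semistable loci match. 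The displayed formula is then obtained by composing the three explicit cohomological actions and reading off through the isometry of Example~\ref{example:O'Grady moduli space}.

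Your route is not wrong in spirit, but the assertion that ``$\mathcal{G}$ matches the two strictly semistable loci'' is not a formal consequence of the wall-crossing framework you invoke. An autoequivalence sends the Gieseker chamber for $v_2$ to some other chamber for $v$, and the birational identification with $M_S$ coming from \cite{MeachanZhang:BirationalGeometry} can scramble the singular locus: this is exactly what happens for the map $\widetilde{\psi}$ of Proposition~\ref{prop:psi} (cf.\ Remark~\ref{rmk:zero-two-section}), where $\sigma\mapsto 3b_2-\sigma_2$ rather than $\sigma_2$. So appealing to ``exactly as in the proof of Proposition~\ref{prop:psi}'' actually cuts against your own claim. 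To close the gap you would still need to check, for the specific equivalence $\mathcal{G}$, that a general strictly semistable object on one side is sent to a strictly semistable object on the other---and the cleanest way to do that is precisely the step-by-step factorisation the paper uses. Your cohomological computation on $v^\perp$ is routine and agrees with the paper.
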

\begin{rmk}
Notice that $m+n\in2\ZZ$ since $\left(\left(-m,\frac{m}{2}H,\frac{n}{2}\right),v_2^\perp\right)\subset\ZZ$ by definition.
\end{rmk}
The proposition follows from the following two lemmata and Example~\ref{example:O'Grady moduli space}.
\begin{lemma}[\protect{\cite[Lemma~2.26]{PeregoRapagnetta:DeformationsOfO'Grady}}]\label{lemma:tensor H}
Tensoring by a multiple $kH$ of the polarisation does not change the stability of a sheaf. 
In particular, if $w=(r,cH,s)$ and $w'=(r,(c+rk)H,s+2ck+rk^2)$, then the induced morphism 
$$ -\otimes(kH)\colon\widetilde{M}_{w}(S)\longrightarrow\widetilde{M}_{w'}(S)$$
is an isomorphism and sends the exceptional divisor of the former to the exceptional divisor
of the latter.
\end{lemma}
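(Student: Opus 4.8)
The plan is to establish the stability statement first and then read off the isomorphism of moduli spaces, its effect on Mukai vectors, and the behaviour of the exceptional divisor. The starting point is that $-\otimes\cO(kH)$ is an exact autoequivalence of the category $\operatorname{Coh}(S)$, with inverse $-\otimes\cO(-kH)$, so it induces an inclusion-preserving bijection between subsheaves $F'\subset F$ and subsheaves $F'\otimes\cO(kH)\subset F\otimes\cO(kH)$. Computing Hilbert polynomials with respect to $H$, one has $P_{F\otimes\cO(kH)}(m)=\chi(F\otimes\cO((m+k)H))=P_F(m+k)$, so tensoring by $\cO(kH)$ merely shifts the variable by $k$; as the leading coefficient of $P_F$ is unaffected by tensoring with a line bundle, the reduced Hilbert polynomial is shifted by the same amount. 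Since Gieseker (semi)stability is the comparison $p_{F'}(m)\leq p_F(m)$ (respectively $<$) for $m\gg0$, which is insensitive to the shift $m\mapsto m+k$, both stability and semistability are preserved; applying the same observation to the Jordan--H\"older factors shows that the graded objects satisfy $\operatorname{gr}(F\otimes\cO(kH))=\operatorname{gr}(F)\otimes\cO(kH)$, so the S-equivalence relation is preserved as well.

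Next I would promote this to families: tensoring a flat family of (semi)stable sheaves by the pullback of $\cO(kH)$ yields again a flat family of (semi)stable sheaves, so $-\otimes\cO(kH)$ is compatible with the formation of families and S-equivalence and descends to an isomorphism of coarse moduli spaces $M_w(S)\to M_{w'}(S)$, with inverse induced by $-\otimes\cO(-kH)$. The transformation of Mukai vectors then follows from multiplicativity of the Chern character, $v(F\otimes\cO(kH))=v(F)\cdot\ch(\cO(kH))$; since $\ch(\cO(kH))=(1,kH,k^2)$ once we use $H^2=2$, a short computation in the cohomology ring sends $w=(r,cH,s)$ to $w'=(r,(c+rk)H,s+2ck+rk^2)$, exactly as in the statement.

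Finally I would lift this isomorphism to the symplectic desingularisations. By Theorem~\ref{thm:big OG10}(1), each map $\widetilde{M}_w(S)\to M_w(S)$ is the blow-up along the reduced singular locus $(\Sigma_w)_{\operatorname{red}}$, which is precisely the locus of S-equivalence classes admitting a strictly semistable representative. Because $-\otimes\cO(kH)$ preserves stability and strict semistability, the isomorphism $M_w(S)\to M_{w'}(S)$ carries $(\Sigma_w)_{\operatorname{red}}$ isomorphically onto $(\Sigma_{w'})_{\operatorname{red}}$, and the universal property of the blow-up then provides a unique lift to an isomorphism $\widetilde{M}_w(S)\to\widetilde{M}_{w'}(S)$ sending the exceptional divisor to the exceptional divisor.

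The argument is essentially formal once stability preservation is in hand, so I do not anticipate a genuine difficulty. The step that deserves the most care is the passage to families: one must verify that $-\otimes\cO(kH)$ induces an isomorphism of the coarse spaces rather than a mere bijection on closed points, and that the comparison of reduced Hilbert polynomials---together with the identification of Jordan--H\"older factors on the strictly semistable locus---is handled consistently under the shift $m\mapsto m+k$. I would write this step out in full and treat the remaining cohomological computations as routine.
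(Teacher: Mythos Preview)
Your argument is correct and is the standard one: tensoring by a line bundle is an exact autoequivalence, the reduced Hilbert polynomial undergoes the shift $m\mapsto m+k$, hence Gieseker (semi)stability and S-equivalence are preserved, the Mukai vector transforms as claimed, and the isomorphism lifts to the blow-ups because it identifies the reduced singular loci. Note that the paper does not supply its own proof of this lemma; it is quoted directly from \cite[Lemma~2.26]{PeregoRapagnetta:DeformationsOfO'Grady}, so there is no alternative approach to compare against---your write-up is exactly the kind of verification that reference contains.
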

\begin{lemma}
The Fourier--Mukai transform $F_\Delta\colon\D^b(S)\to\D^b(S)$ induces
a birational morphism 
$F_\Delta\colon\widetilde{M}_{(0,2H,2)}(S)\to\widetilde{M}_{(2,2H,0)}(S)$,
$E\mapsto F_\Delta(E)^\vee$, such that 
the exceptional divisor of the former is sent to the exceptional divisor of the latter.

The induced map $\Pic(\widetilde{M}_{(0,2H,2)})\to\Pic(\widetilde{M}_{(2,2H,0)})$
on the Picard lattices is
$$
\left(\left(m,\frac{m}{2}H,\frac{n}{2}\right),k\frac{\sigma_2}{2}\right)\longmapsto
\left(\left(-\frac{n}{2},-\frac{m}{2}H,-m\right),k\frac{\sigma_2}{2}\right).
$$
\end{lemma}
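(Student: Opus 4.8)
The plan is to realise $F_\Delta$ as a derived autoequivalence, compute its action on the Mukai lattice, and then transport the resulting isometry to the moduli spaces and their desingularisations. First I would observe that the short exact sequence $0\to I_\Delta\to\cO_{S\times S}\to\cO_\Delta\to0$ exhibits $F_\Delta$, up to a shift, as a spherical twist around the spherical object $\cO_S$; in particular $F_\Delta$ is an equivalence of $\D^b(S)$. Applying the induced triangle $F_\Delta(E)\to R\Gamma(E)\otimes\cO_S\to E\to F_\Delta(E)[1]$ together with the Grothendieck--Riemann--Roch computation for the two ``constant'' kernels $\cO_{S\times S}$ and $\cO_\Delta$, one finds that $F_\Delta$ acts on Mukai vectors by $(r,c,s)\mapsto(s,-c,r)$, so that the composite $E\mapsto F_\Delta(E)^\vee$ induces the isometry $\Phi\colon(r,c,s)\mapsto(s,c,r)$ of the Mukai lattice $\widetilde{\oH}(S,\ZZ)$. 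In particular $\Phi$ sends $(0,2H,2)$ to $(2,2H,0)$ and restricts to an isometry $(0,2H,2)^\perp\xrightarrow{\sim}(2,2H,0)^\perp$.

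Next I would check that the equivalence genuinely produces a birational map at the level of moduli. For a general stable sheaf $E$ with $v(E)=(0,2H,2)$ --- supported on a smooth curve $C\in|2H|$, with $R\Gamma(E)=\oH^0(E)=\CC^2$ concentrated in degree $0$ --- the triangle above identifies $F_\Delta(E)$ with the kernel of the evaluation $\cO_S^{\oplus2}\twoheadrightarrow E$, a rank-$2$ sheaf with $v=(2,-2H,0)$; hence $F_\Delta(E)^\vee$ is a stable sheaf with $v=(2,2H,0)$. Since $F_\Delta$ is invertible, this construction is an isomorphism between the open loci of stable sheaves on which source and target both remain sheaves, and so defines a birational map $\widetilde{M}_{(0,2H,2)}(S)\dashrightarrow\widetilde{M}_{(2,2H,0)}(S)$ between the symplectic desingularisations, an isomorphism over these stable loci.

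To see that the exceptional divisors correspond I would argue that $F_\Delta$ and dualisation preserve S-equivalence: a strictly semistable sheaf with Jordan--H\"older factors $F_1\oplus F_2$ is sent to an object whose Jordan--H\"older factors are the transforms of the $F_i$. Thus the strictly semistable loci --- which are precisely the singular loci of $M_{(0,2H,2)}$ and $M_{(2,2H,0)}$ --- are matched by the birational map, and since each desingularisation is the blow-up of the reduced singular locus, the induced Hodge isometry on $\oH^2$ carries the exceptional class $\widetilde{\Sigma}$ to $\widetilde{\Sigma}$, giving the component $k\frac{\sigma_2}{2}\mapsto k\frac{\sigma_2}{2}$ of the formula. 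On the orthogonal complement, the induced map is the isomorphism $v^\perp\cong\oH^2(M_v)$ of Theorem~\ref{thm:big OG10} intertwined with $\Phi$; the contravariant dualisation $(-)^\vee$ contributes an overall sign on this summand, so that the image of $((m,\tfrac{m}{2}H,\tfrac{n}{2}),0)$ is $-\Phi(m,\tfrac{m}{2}H,\tfrac{n}{2})=(-\tfrac{n}{2},-\tfrac{m}{2}H,-m)$, which is exactly the stated formula. The residual sign choices are pinned down by the fact that the map must be a genuine Hodge (parallel-transport) isometry, as guaranteed for bimeromorphic irreducible holomorphic symplectic manifolds by Huybrechts (\cite{Huybrechts:BasicResults}).

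The main obstacle I expect lies in this last step: relating the abstract Mukai-lattice isometry $\Phi$ to the geometric action on $\oH^2(\widetilde{M})$ requires care with the sign introduced by the contravariant functor $(-)^\vee$ and with the normalisation of the exceptional class $\sigma_2$, and one must simultaneously control the transform along the strictly semistable (singular) locus in order to be certain that the exceptional divisors are genuinely exchanged rather than contracted or displaced.
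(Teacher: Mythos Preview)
Your approach is essentially the same as the paper's. Both arguments identify $F_\Delta(E)$ via the triangle coming from $0\to I_\Delta\to\cO_{S\times S}\to\cO_\Delta\to0$, both check on the open locus of sheaves $i_*L$ with $C$ smooth and $L$ globally generated that $F_\Delta(E)$ is the kernel of the evaluation map (the paper simply cites O'Grady's Proposition~4.1.2 for this), and both attribute the global sign on the $v^\perp$-summand to the contravariance of $(-)^\vee$ (the paper cites Yoshioka for this). The only place where the paper is more precise than your sketch is the exceptional divisor: rather than arguing abstractly that S-equivalence is preserved, the paper repeats the same explicit computation at a general point of $\Sigma_v\cong\Sym^2 M_{v/2}(S)$, which simultaneously shows that the transform is defined there and lands in the singular locus of the target --- exactly the point you flagged as the main obstacle.
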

\begin{proof}
This is essentially \cite[Proposition~4.1.2]{O'Grady:10dimPublished}. 
In fact, following the notation therein, let $\cJ^0$ be the open subset of 
$\widetilde{M}_{(0,2H,2)}(S)$ consisting of sheaves $E=i_*L$ where $i\colon C\to S$ is the 
inclusion, $C$ is smooth and $L$ is a globally generated line bundle with $h^0(L)=2$.
Notice that $\chi(L)=2$, so $h^1(L)=0$.
The short exact sequence $0\to I_\Delta\to\cO_{S\times S}\to\cO_\Delta\to0$, induces a 
short exact sequence of complexes
$$ 0\longrightarrow F_\Delta(i_*L)\longrightarrow F_{\cO_{S\times S}}(i_*L)\stackrel{f}{\longrightarrow} F_{\cO_{\Delta}}(i_*L)\longrightarrow0.$$
Since $i_*L\in\cJ^0$, we have that $F_{\cO_{S\times S}}(i_*L)=\oH^0(L)\otimes\cO_S$ and
$f\colon\oH^0(L)\otimes\cO_S\to i_*L$ is the evaluation map, which is surjective by 
hypothesis. 
The proof is now reduced to the proof of \cite[Proposition~4.1.2]{O'Grady:10dimPublished}.

The statement about the exceptional divisor follows from the same computation applied to the 
general point of the singular locus $\Sigma_v\cong\Sym^2 M_{v/2}(S)$.

Finally, the change of sign in the last statement follows from 
\cite[Proposition~2.5]{Yoshioka:ModuliSpacesAbelianSurfaces}
\end{proof}

\subsubsection{Proof of Theorem~\ref{thm:LSV to OG}}
Let $\cV\to\Delta$ be the degeneration to the chordal cubic fourfold considered in
\ref{subsection:degeneration}. The open subset $\Delta'=\Delta\setminus\{0\}$ maps
to the period domain $\Omega_{\operatorname{OG10}}$ of irreducible holomorphic symplectic 
manifolds of OG10 type via the periods of the associated Laza--Sacc\`a--Voisin family 
$\cJ_{\Delta'}$.
The main results of \cite{KollarLazaSaccaVoisin:Degenerations} say that the central fibre 
of the degeneration $\cJ_{\Delta}\to\Delta$ can be replaced by a smooth member that is birational to the moduli
space $\widetilde{M}_{(0,2H,-4)}(S)$, where $S$ is the (generic) $K3$ surface dual to the 
chordal cubic fourfold $\cV_0$. 
This means that the map $\Delta'\to\Omega_{\operatorname{OG10}}$ can be extended
to a map $\Delta\to\Omega_{\operatorname{OG10}}$ by sending $0$ to
the period of $\widetilde{M}_{(0,2H,-4)}(S)$. 
Finally, since the period map is surjective (\cite{Huybrechts:BasicResults}), one gets 
a family $p_1\colon\cX_1\to\Delta$ with two distinguished 
members corresponding to $\cJ_V$ and $\widetilde{M}_{(0,2H,-4)}(S)$ (cf.\ \cite[Proposition~4.4]{Sacca:BirationalGeometryLSV} for a more precise description of this family).
By Proposition~\ref{prop:b_V} and Proposition~\ref{prop:Theta}, 
there exists a parallel transport operator
\begin{equation*}
P_1\colon\oH^2(\cJ_V,\ZZ)\longrightarrow\oH^2(\widetilde{M}_{(0,2H,-4)}(S),\ZZ)
\end{equation*}
in the family $p_1$, such that $P_1(b_V)=b$ and $P_1(\Theta_V)=a-2b$.

Now, since $\widetilde{M}_{(0,2H,-4)}(S)$ is birational to $\widetilde{M}_S$, there is a family 
$p_2\colon\cX_2\to\widetilde{\Delta}$ over the disc with two origins 
(cf.\ \cite[Theorem~4.6]{Huybrechts:BasicResults}) such that the two origins correspond to 
$\widetilde{M}_{(0,2H,-4)}(S)$ and $\widetilde{M}_S$. 
We constructed in Sections \ref{subsection:hyperelliptic} and \ref{subsection:O'Grady} a parallel transport operator
\begin{equation}\label{eqn:P_2}
P_2\colon\oH^2(\widetilde{M}_{(0,2H,-4)}(S),\ZZ)\longrightarrow\oH^2(\widetilde{M}_S,\ZZ)
\end{equation}
in the family $p_2$, such that $P_2(a)=2H-3\widetilde{B}-2\widetilde{\Sigma}$, 
$P_2(b)=H-2\widetilde{B}-\widetilde{\Sigma}$ and
$P_2(\sigma)=3H-6\widetilde{B}-4\widetilde{\Sigma}$ (see Proposition~\ref{prop:psi} and Proposition~\ref{prop:G}).

Gluing together these two families, 
we eventually get a family $\cX\to T$ with two distinguished points corresponding to 
$\cJ_V$ and $\widetilde{M}_S$, and a parallel transport operator
\begin{equation}\label{eqn:P}
P=P_2\circ P_1\colon\oH^2(\cJ_V,\ZZ)\longrightarrow\oH^2(\widetilde{M}_S,\ZZ)
\end{equation}
such that 
$$P(\Theta_V)=\widetilde{B}\qquad\mbox{and}\qquad P(b_V)=H-2\widetilde{B}-\widetilde{\Sigma}=f.$$
Notice that 
$$e=H-\widetilde{B}-\widetilde{\Sigma}=P(\Theta_V+b_V),$$
therefore the theorem follows at once by Proposition~\ref{thm:my monodromy}.
\begin{rmk}
Let $M_{(0,2H,0)}(S)$ be the moduli space containing the Jacobian fibration $\cJ^4_{|2H|}$ of
degree $4$ divisors on the smooth curves in $|2H|$. 
By Lemma~\ref{lemma:0=4}, $M_{(0,2H,0)}(S)$ is isomorphic
to $M_{(0,2H,-4)}(S)$ and the image of $T_S$ under
this isomorphism is the (closure of the) theta divisor $E_S$ of effective line bundles in
$\cJ^4_{|2H|}$. The latter divisor is birational to the symmetric product $\Sym^4\cC$, where
$\cC$ is the universal family of (smooth) curves in $|2H|$. There is a natural map 
$\Sym^4\cC\to \Sym^4 S$, whose generic fibre is the $\PP^1$ of curves in $|2H|$ passing 
through four fixed points. 

On the other hand, by \cite[Proposition~3.0.5]{O'Grady:10dimPublished}, there is a morphism
$\widetilde{B}\to\Sym^4S$ whose generic fibre is again $\PP^1$.

The parallel transport operator $P_2$ makes rigorous the natural expectation that $T_S$ deforms to $\widetilde{B}$.
\end{rmk}

\section{The monodromy group}\label{section:Mon^2}
Let $S$ be a projective $K3$ surface such that $\Pic(S)=\ZZ H$ with $H^2=2$.
Following the notation introduced in the previous section, we put
$e=H-\widetilde{B}-\widetilde{\Sigma}$ and $f=H-2\widetilde{B}-\widetilde{\Sigma}$, 
and denote by $\overline{U}$ the hyperbolic plane they generate. 
Notice that $\overline{U}=P_2((v^\perp)^{1,1})$, where $v=(0,2H,-4)$ and $P_2$ is the parallel 
transport operator (\ref{eqn:P_2}).

Let $A$ be the projection of $\widetilde{\Sigma}$ on the orthogonal complement $\overline{U}^\perp$ of $\overline{U}$, that is $A$ is such that
$\widetilde{\Sigma}=3f-A$ and $A\perp \overline{U}$.
\begin{lemma}\label{lemma:A monodromy}
The reflection $R_A$ is a monodromy operator.
\end{lemma}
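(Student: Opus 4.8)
The plan is to recognise $R_A$ as the transport, across the parallel transport operator $P_2$ built in the proof of Theorem~\ref{thm:LSV to OG}, of the exceptional reflection attached to the symplectic desingularisation of $M_{(0,2H,-4)}(S)$. Write $v_0=(0,2H,-4)$ and let $\sigma_0$ be the class of the exceptional divisor $\widetilde{\Sigma}_{v_0}$ of the resolution $\pi\colon\widetilde{M}_{v_0}(S)\to M_{v_0}(S)$; this is the class denoted $\sigma$ in the proof of Theorem~\ref{thm:LSV to OG}.

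First I would record the elementary identity relating $A$ to $\sigma_0$. Using $e=H-\widetilde{B}-\widetilde{\Sigma}$ and $f=H-2\widetilde{B}-\widetilde{\Sigma}$, the defining relation $\widetilde{\Sigma}=3f-A$ gives
\[
A=3f-\widetilde{\Sigma}=3H-6\widetilde{B}-4\widetilde{\Sigma}.
\]
On the other hand, the parallel transport operator $P_2$ of (\ref{eqn:P_2}) satisfies $P_2(\sigma_0)=3H-6\widetilde{B}-4\widetilde{\Sigma}$, so that $A=P_2(\sigma_0)$. In particular $A$ inherits $A^2=\sigma_0^2=-6$ and divisibility $\divv(A)=3$, which may also be checked directly from the pairings $(A,\widetilde{B})=0$, $(A,\widetilde{\Sigma})=6$ and $(A,x)=3(H,x)$ for $x\in\oH^2(S,\ZZ)$; consequently $(x,A)\in3\ZZ$ for all $x$ and the Beauville--Bogomolov reflection $R_A(x)=x+\tfrac{1}{3}(x,A)A$ is a well-defined integral isometry.

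Next I would argue that $R_{\sigma_0}$ is a monodromy operator of $\widetilde{M}_{v_0}(S)$, and then transport it. The divisor $\widetilde{\Sigma}_{v_0}$ is the reduced, irreducible exceptional divisor of the symplectic resolution $\pi$ (irreducibility as in Remark~\ref{rmk:modular description of Sigma}) and has negative square $\sigma_0^2=-6$; hence it is a prime exceptional divisor, and by Markman's theorem (\cite{Markman:PrimeExceptional}) the reflection $R_{\sigma_0}$ lies in $\Mon^2(\widetilde{M}_{v_0}(S))$, Markman's result guaranteeing in particular that it is integral. Since $P_2$ is a parallel transport operator between the deformation-equivalent OG10 manifolds $\widetilde{M}_{v_0}(S)$ and $\widetilde{M}_S$, conjugation carries monodromy operators to monodromy operators, and for a reflection $P_2\circ R_{\sigma_0}\circ P_2^{-1}=R_{P_2(\sigma_0)}=R_A$. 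Therefore $R_A\in\Mon^2(\widetilde{M}_S)$.

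The hard part will be the middle step: one must be sure that the exceptional divisor of the symplectic desingularisation genuinely falls under the hypotheses of Markman's prime exceptional divisor theorem (reduced, irreducible, negative square, integral reflection), rather than only the square $-2$ strict transforms treated in Section~\ref{section:exceptional reflections}. I would also emphasise \emph{why} a transport argument is needed here: unlike the operators produced by Theorem~\ref{thm:LSV to OG}, the reflection $R_A$ acts as $-\id$ on the discriminant group $A_{\widetilde{M}_S}\cong\ZZ/3\ZZ$ (because $A^2=-6=-2\divv(A)$ forces $R_A(A^*)=-A^*$ for a generator $A^*$), so $R_A\notin\Ort^+$ and Theorem~\ref{thm:LSV to OG} cannot be invoked directly despite $R_A$ fixing both $e$ and $f$. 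This is precisely what makes $R_A$ a useful new generator for the arguments of this section.
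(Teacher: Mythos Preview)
Your proof is correct and follows essentially the same route as the paper's: both identify $A=P_2(\sigma)$ with $\sigma$ the exceptional class on $\widetilde{M}_{(0,2H,-4)}(S)$, and both appeal to Markman's prime exceptional divisor theory. The only cosmetic difference is that the paper transports the divisor first (invoking \cite[Proposition~5.4]{Markman:PrimeExceptional} to declare $A$ \emph{stably} prime exceptional, then \cite[Theorem~1.1]{Markman:PrimeExceptional}), whereas you apply Markman's theorem directly to the genuinely prime exceptional $\widetilde{\Sigma}_{v_0}$ and then conjugate the resulting monodromy operator by $P_2$; the two arguments are logically equivalent, and your phrasing is arguably the more transparent one.
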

\begin{proof}
Let $\sigma$ be the class of the exceptional divisor $\widetilde{\Sigma}_v$, where 
$v=(0,2H,-4)$, and let $P_2$ be the parallel transport operator defined in (\ref{eqn:P_2}). 
Since $P_2(\sigma)=A$, by \cite[Proposition~5.4]{Markman:PrimeExceptional} $A$ is a stably prime exceptional divisors, and so the reflection $R_A$ is a monodromy operator by \cite[Theorem~1.1]{Markman:PrimeExceptional}.
\end{proof}
\begin{rmk}\label{rmk:A acts -id}
Notice that $R_A$ is the identity on $\overline{U}$ and acts as $-id$ on the discriminant group.
It follows from Proposition~\ref{thm:my monodromy} that $R_A$ is not induced by a family of cubic fourfolds via the LSV construction.
\end{rmk}

In order to keep the notation as easy as possible, from now on we simply denote by 
$\Or^+$ the group $\Or^+(\oH^2(\widetilde{M}_S,\ZZ))$. 

Consider the following groups:
\begin{eqnarray}
G_1 & = & \left\{g\in\Or^+\mid g(H)=H,\; g(\widetilde{B})=\widetilde{B},\; g(\widetilde{\Sigma})=\widetilde{\Sigma}\right\} \\ \nonumber
G_2 & = & \left\{g\in\Or^+\mid g(\xi)=\xi, \;\forall\xi\in H^2(S,\ZZ)\right\}=\langle R_{\widetilde{B}},R_{\widetilde{\Sigma}}\rangle\\ \nonumber
G_3 & = & \left\{g\in\widetilde{\Or}^+\mid g(e)=e,\; g(f)=f\right\}. \nonumber
\end{eqnarray}
Let $k\in\oH^2(S,\ZZ)$ be a class such that $k^2=-2$ and $(k,H)=0$, and put
$l=k+f$. Notice that $R_k\in G_1$.

Define
$$G=\langle G_1,G_2,G_3,R_l,R_A\rangle.$$
\begin{prop}\label{prop:Mon}
$G\subset\Mon^2(\widetilde{M}_S)$.
\end{prop}
\begin{proof}
First of all let us notice that $G_1,G_3\subset\Mon^2(\widetilde{M}_S)$ by 
Theorem~\ref{thm:my operators O'Grady} and Theorem~\ref{thm:LSV to OG}, and that $G_2\subset\Mon^2(\widetilde{M}_S)$ by \cite[Section~5.2]{Markman:ModularGaloisCovers}. 
Moreover, if we specialise $S$ to a
$K3$ surface $S_0$ as in Example~\ref{example:R_L}, and we choose a parallel transport
operator $P\colon\oH^2(S,\ZZ)\to\oH^2(S_0,\ZZ)$ such that $P(H)=H$ and $P(k)=K$, then 
$R_l=P^{-1}\circ R_{\widetilde{Z}}\circ P\in\Mon^2(\widetilde{M}_S)$, where $R_{\widetilde{Z}}$ 
is the reflection in Example~\ref{example:R_L}.  
Finally, $R_{A}\in\Mon^2(\widetilde{M}_S)$ by Lemma~\ref{lemma:A monodromy}.
\end{proof}

\begin{thm}\label{thm:Mon^2}
$\Mon^2(\widetilde{M}_S)=G=\Or^+(\oH^2(\widetilde{M}_S,\ZZ))$.
\end{thm}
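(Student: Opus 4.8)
The plan is to prove the single inclusion $\Or^+\subseteq G$, from which everything follows: the inclusion $G\subseteq\Mon^2(\widetilde{M}_S)$ is Proposition~\ref{prop:G} and $\Mon^2(\widetilde{M}_S)\subseteq\Or^+(\oH^2(\widetilde{M}_S,\ZZ))$ is Remark~\ref{rmk:orient pub}, so the chain of equalities collapses once $\Or^+\subseteq G$ is established. Since the summands $U$ and $E_8(-1)$ are unimodular and $\det G_2(-1)=3$, the discriminant group is $A\cong\ZZ/3\ZZ$, whence $\Or(A)=\{\pm\id\}$ and the stable subgroup $\Ort^+$ has index two in $\Or^+$. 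By Remark~\ref{rmk:A acts -id} the generator $R_A\in G$ acts as $-\id$ on $A$, so it represents the non-trivial coset; consequently it is enough to prove $\Ort^+\subseteq G$.

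For this I would exploit the orthogonal splitting $\oH^2(\widetilde{M}_S,\ZZ)=\overline{U}\oplus W$, where $\overline{U}=\langle e,f\rangle$ is a unimodular hyperbolic plane and $W=\overline{U}^\perp\cong U^2\oplus E_8(-1)^2\oplus G_2(-1)$. By definition $G_3=\{g\in\Ort^+\mid g|_{\overline U}=\id\}$, and restriction to $W$ identifies it with $\Ort^+(W)$; hence $\Ort^+(W)\subseteq G$. Because $\oH^2(\widetilde{M}_S,\ZZ)$ contains two orthogonal hyperbolic planes, Eichler's theorem presents $\Ort^+$ as the group generated by $\Ort^+(W)$ together with all Eichler transvections $E_{e,a},E_{f,a}$ ($a\in W$), where
\[
E_{u,a}(x)=x-(x,u)\,a+(x,a)\,u-\frac{1}{2}(a,a)(x,u)\,u.
\]
Thus the problem reduces to producing every such transvection inside $G$.

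The engine would be the elementary identity $R_kR_l=E_{f,k}$. Here $l=k+f$, so $l-k=f$ is isotropic and orthogonal to both $k$ and $l$, and a direct computation using $k^2=l^2=-2$ gives $R_kR_l(x)=x-(x,f)k+(x,k)f+(x,f)f=E_{f,k}(x)$. Since $R_k\in G_1$ and $R_l$ is a defining generator, $E_{f,k}\in G$. Conjugating by $g\in G_3\cong\Ort^+(W)$ fixes $f$ and moves $k$, so $E_{f,g(k)}\in G$ for all such $g$; the additivity relation $E_{f,a}E_{f,b}=E_{f,a+b}$ then extends this to $E_{f,a}\in G$ for every $a$ in the sublattice of $W$ generated by the $\Ort^+(W)$-orbit of $k$. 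Finally, the reflection $R_{\widetilde{B}}\in G_2$ interchanges $e$ and $f$ (one checks $R_{\widetilde{B}}(e)=f$ and $R_{\widetilde{B}}(f)=e$) and preserves $W$, so conjugating the previous transvections by $R_{\widetilde{B}}$ yields the family $E_{e,a}$ as well.

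The hard part will be the final generation step: to invoke Eichler's theorem one needs the transvections for \emph{all} $a\in W$, i.e.\ the $\Ort^+(W)$-orbit of $k$ must span $W$. I would reduce this to checking that $k$ is a primitive $(-2)$-vector of divisibility one in $W$ (arrangeable by choosing $k\perp H$ a root of divisibility one), and then to Eichler's criterion, by which the divisibility-one $(-2)$-classes form a single $\Ort^+(W)$-orbit; one then verifies that these classes span $W$, the only genuinely delicate point being the interaction with the $G_2(-1)$ summand, which alone contributes the discriminant $A\cong\ZZ/3\ZZ$. Granting this, $E_{e,a},E_{f,a}\in G$ for all $a\in W$, so $\Ort^+\subseteq G$, and together with the first paragraph this gives $G=\Or^+(\oH^2(\widetilde{M}_S,\ZZ))=\Mon^2(\widetilde{M}_S)$.
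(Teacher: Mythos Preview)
Your proposal is correct and follows essentially the same route as the paper: both arguments reduce to Eichler's presentation $\Or^+=\langle E_{\overline U}(L_1),\Or^+(L_1)\rangle$, identify $\Or^+(L_1)$ with $\langle G_3,R_A\rangle$ (you do this in two steps via the index-two reduction to $\Ort^+$, the paper in one), produce the seed transvection from the identity $R_kR_l=t(-f,k)$, propagate it via the Eichler criterion and additivity, and swap $e\leftrightarrow f$ by conjugating with $R_{\widetilde B}$. The only cosmetic difference is that the paper reduces first to a basis of $L_1$ and then to a single $(-2)$-class, whereas you conjugate first and then argue that the orbit spans; both reductions rest on the same Eichler transitivity statement for divisibility-one $(-2)$-vectors in $L_1\cong U^2\oplus E_8(-1)^2\oplus A_2(-1)$.
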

The proof of the theorem is lattice-theoretic, so we recall here the notation and
the results we need. Let $L$ be an even lattice. If $z\in L$ is an isotropic element, i.e.\ $z^2=0$, 
and $a\in L$ is orthogonal to $z$, then the transvection $t(z,a)$ is defined by
$$ t(z,a)(x)=x-(a,x)z+(z,x)a-\frac{1}{2}(a,a)(z,x)z.$$
Transvections are orientation preserving isometries with determinant $1$ and acting as the 
identity on the discriminant group.
\begin{lemma}[\protect{\cite[Section~3]{GritsenkoHulekSankaran:Abelianisation}}]\label{lemma:transvections}
\begin{enumerate}
\item $t(z,a)^{-1}=t(z,-a)=t(-z,a)$;
\item $g\circ t(z,a)\circ g^{-1}=t(g(z),g(a))$ for every $g\in\Or^+$;
\item if $R_a$ is integral, then $t(z,a)=R_aR_{a+\frac{1}{2}a^2z}$;
\item if $(a,z)=0=(b,z)$, then $t(z,a+b)=t(z,a)\circ t(z,b)$.
\end{enumerate}
\end{lemma}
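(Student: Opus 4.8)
The plan is to verify the four identities by direct substitution into the defining formula
\[
t(z,a)(x)=x-(a,x)z+(z,x)a-\tfrac{1}{2}(a,a)(z,x)z,
\]
using throughout only the two standing hypotheses $z^2=0$ and $(a,z)=0$. The single computation that organises everything is the following: if $b\perp z$ and $y=t(z,b)(x)$, then the cross terms carrying a factor $(z,z)$ or $(z,b)$ vanish, leaving $(z,y)=(z,x)$ and $(a,y)=(a,x)+(a,b)(z,x)$. With this in hand each identity reduces to a short bookkeeping exercise.

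For (1) I would first note that passing from $(z,a)$ to $(z,-a)$ or to $(-z,a)$ flips the sign of each of the first three terms and fixes the last term (which is separately quadratic in $z$ and in $a$), so $t(z,-a)$ and $t(-z,a)$ coincide term by term. To identify this common map with $t(z,a)^{-1}$, set $y=t(z,-a)(x)$; the simplification above gives $(z,y)=(z,x)$ and $(a,y)=(a,x)-(a,a)(z,x)$, and substituting these into $t(z,a)(y)$ makes both the $a$-component and the $z$-component cancel, so $t(z,a)(y)=x$. Identity (2) is transport of structure: writing $w=g^{-1}(x)$ and applying $g$ to $t(z,a)(w)$, the isometry relations $(a,w)=(g(a),x)$, $(z,w)=(g(z),x)$ and $(a,a)=(g(a),g(a))$ convert the output into $t(g(z),g(a))(x)$; this uses nothing about orientations, so it holds for every isometry and in particular on $\Or^+$. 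For (4), with $y=t(z,b)(x)$ the simplification gives $(a,y)=(a,x)+(a,b)(z,x)$, and collecting terms in $t(z,a)(y)$ the $a$- and $b$-parts fuse into $(z,x)(a+b)$ while the coefficient of $z$ becomes $-(a+b,x)-\tfrac{1}{2}\bigl((a,a)+2(a,b)+(b,b)\bigr)(z,x)=-(a+b,x)-\tfrac{1}{2}(a+b,a+b)(z,x)$, which is precisely $t(z,a+b)(x)$.

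Identity (3) needs the most care. I would set $b=a+\tfrac{1}{2}(a,a)z$ and first record, using $z^2=0$ and $(a,z)=0$, that $(b,b)=(a,a)$, $R_a(z)=z$ and $R_a(a)=-a$, so that $R_a(b)=-a+\tfrac{1}{2}(a,a)z$. Expanding $R_b(x)=x-c\,b$ with $c=\tfrac{2(a,x)}{(a,a)}+(z,x)$ and then applying the linear map $R_a$, the $a$-terms telescope to $(z,x)a$ and the $z$-terms to $-(a,x)z-\tfrac{1}{2}(a,a)(z,x)z$, reproducing $t(z,a)(x)$ exactly. The genuinely substantive point here --- and the one place where the hypotheses that $R_a$ be integral and that $L$ be even enter --- is to check that this equation lives in $\Or(L)$ and not merely in $\Or(L_{\QQ})$: evenness of $L$ gives $\tfrac{1}{2}(a,a)\in\ZZ$, hence $b\in L$, and integrality of $R_a$, i.e.\ $(a,a)\mid 2(a,x)$ for all $x$, forces the same for $R_b$ since $2(b,x)=2(a,x)+(a,a)(z,x)$ is divisible by $(a,a)=(b,b)$. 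I expect this integrality check, rather than any of the algebraic identities, to be the main thing worth spelling out; the rest is routine substitution.
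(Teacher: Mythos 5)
Your verification is correct, and it is essentially the same argument as the paper's: the paper gives no proof of this lemma, quoting it from \cite[Section~3]{GritsenkoHulekSankaran:Abelianisation}, where the identities are established by exactly this kind of direct substitution into the defining formula (your computations for (1), (2) and (4) check out, using only $z^2=0$ and $(a,z)=(b,z)=0$). Your treatment of (3) is also right, and you correctly isolate the one substantive point: evenness of $L$ (a standing hypothesis in the paper, stated just before the lemma) gives $b=a+\tfrac{1}{2}(a,a)z\in L$, and integrality of $R_a$ transfers to $R_b$ via $2(b,x)=2(a,x)+(a,a)(z,x)$ together with $(b,b)=(a,a)$, so the factorisation $t(z,a)=R_aR_{b}$ holds in $\Or(L)$ and not merely in $\Or(L_{\QQ})$.
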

Now suppose that $L=U\oplus L_1$ and that the hyperbolic plane $U$ is generated by two isotropic classes $e$ and $f$; define 
$$E_U(L_1)=\langle t(e,a),t(f,a)\mid a\in L_1\rangle.$$
If moreover $L_1=U\oplus L_2$, then by \cite[Proposition~3.3~(iii)]{GritsenkoHulekSankaran:Abelianisation})
\begin{equation}\label{eqn:Eichler}
\Or^+(L)=\langle E_U(L_1),\Or^+(L_1)\rangle,
\end{equation}
where $\Or^+(L_1)$ is embedded in $\Or^+(L)$ by extending any isometry of $L_1$ via the 
identity.

We are now ready to prove Theorem~\ref{thm:Mon^2}.
\begin{proof}[Proof of Theorem~\ref{thm:Mon^2}]
By Remark~\ref{rmk:orient pub} and Proposition~\ref{prop:Mon} we have a chain of inclusions
$$G\subset\Mon^2(\widetilde{M}_S)\subset\Or^+(\oH^2(\widetilde{M}_S,\ZZ)).$$

We claim that $G=\Or^+(\oH^2(\widetilde{M}_S,\ZZ))$, from which the theorem follows.

Recall that $\overline{U}\subset\Or^+(\oH^2(\widetilde{M}_S,\ZZ))$ is the hyperbolic plane spanned by the distinguished classes $e=H-\widetilde{B}-\widetilde{\Sigma}$ and $f=H-2\widetilde{B}-\widetilde{\Sigma}$; denote by $L_1$ its orthogonal complement.
Then by the identification (\ref{eqn:Eichler}), 
$$\Or^+(\oH^2(\widetilde{M}_S,\ZZ))=\langle E_{\overline{U}}(L_1),\Or^+(L_1)\rangle.$$
Notice that $\Or^+(L_1)=\langle G_3,R_A\rangle$. In fact $G_3\cong\widetilde{\Or}^+(L_1)$ by definition, and $R_A$ restricts to an isometry of $L_1$ acting as $-\id$ on the discriminant group (Remark~\ref{rmk:A acts -id}).

It follows that it is enough to show that all the transvections $t(e,a)$ and $t(f,a)$, with $a\in L_1$, belong to 
$G$.

By part (2) of Lemma~\ref{lemma:transvections}, one notices that 
$$R_{\widetilde{B}}\circ t(f,a)\circ R_{\widetilde{B}}=t(R_{\widetilde{B}}(f),R_{\widetilde{B}}(a))=t(e,a),$$ where we used that $R_{\widetilde{B}}(a)=a$, for every $a\in L_1$, since $\widetilde{B}\in\overline{U}$.
So it is enough to prove the claim for $t(f,a)$.

Choosing a base $\{a_1,\cdots,a_{22}\}$ for 
$L_1\cong U^2\oplus E_8(-1)^2\oplus A_2(-1)$, by part (4) of Lemma~\ref{lemma:transvections} it is enough to prove the claim
for $t(f,a_1),\cdots,t(f,a_{22})$. Notice that there is a canonical basis for $L_1$ with 
$(a_i,a_i)=0$ or $(a_i,a_i)=-2$: in both cases
$a_i$ has divisibility $1$.

On the other hand, for any isotropic element $c\in L_1$, there exist two $(-2)$-elements $a,b\in L_1$ such that $t(f,c)=t(f,a)\circ t(f,b)$. 
In fact, if $a\in L_1$ is a $(-2)$-element such that $(a,c)=0$, then pick $b=c-a$.

In this way, we reduce the problem to proving that $t(f,a)\in G$ for every 
$(-2)$-element $a\in L_1$. Applying the Eichler criterion 
\cite[Proposition~3.3~(i)]{GritsenkoHulekSankaran:Abelianisation} to the lattice $L_1$, and using
part (2) of Lemma~\ref{lemma:transvections}, we eventually notice that it is enough to 
prove the claim for one specific $a$.

Let $a=k\in\oH^2(S,\ZZ)$ be the class in $\oH^2(S,\ZZ)$ such that $k^2=-2$ and $(H,k)=0$.
Since the reflection $R_k$ is integral, by part (3) of Lemma~\ref{lemma:transvections} 
we can write
$$t(-f,k)=R_k R_{k+f},$$
and $t(-f,k)=t(f,k)^{-1}$ by part (1) of Lemma~\ref{lemma:transvections}.
Finally, $R_k\in G_1$ and $R_{k+f}=R_l\in G$, so the claim is proved.
\end{proof}

\begin{rmk}
The proof proves the stronger statement
$$ \Or^+(\oH^2(\widetilde{M}_S,\ZZ))=\langle R_k, R_{\widetilde{B}}, R_l, R_A, G_3\rangle.$$
\end{rmk}

\section{The locally trivial monodromy group of the singular moduli space}
In this section we explain how Theorem~\ref{thm:Mon^2} helps to compute the locally trivial monodromy group of the singular moduli spaces $M_S$. We refer to Section~\ref{section:Singular} for the definitions.

Let us recall that there exists a symplectic resolution of singularities $\pi\colon\widetilde{M}_S\to M_S$. The monodromy group $\Mon^2(\widetilde{M}_S)$ and the locally trivial monodromy group $\Mon^2(M_S)_{\operatorname{lt}}$ are related by means of the monodromy group $\Mon^2(\pi)$ of simultaneous monodromy operators. Recall that $\Mon^2(\pi)\subset\Or(\oH^2(\widetilde{M}_S,\ZZ))\times\Or(\oH^2(M_S,\ZZ))$, and denote by $p$ and $q$ the two projections, i.e.
\begin{equation*}
\xymatrix{
                                             &   \Mon^2(\pi)\ar[dl]_{p}\ar[dr]^{q}  &   \\
\Or(\oH^2(\widetilde{M}_S,\ZZ))  &                                                & \Or(\oH^2(M_S,\ZZ)).
}
\end{equation*}

\begin{thm}\label{thm:l t monodromy}
$\Mon^2(M_S)_{\operatorname{lt}}=\Ort^+(\oH^2(M_S,\ZZ))$.
\end{thm}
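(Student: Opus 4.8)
The plan is to transfer the computation from the resolution $\widetilde{M}_S$, whose monodromy group is now known to be $\Or^+(\oH^2(\widetilde{M}_S,\ZZ))$ by Theorem~\ref{thm:Mon^2}, to the singular space $M_S$ by means of the comparison group $\Mon^2(\pi)$ and its two projections $p$ and $q$. The first step is to set up the arithmetic. By Theorem~\ref{thm:big OG10} the pullback $\pi^*\colon\oH^2(M_S,\ZZ)\to\oH^2(\widetilde{M}_S,\ZZ)$ is an injective isometric embedding identifying $\oH^2(M_S,\ZZ)$ with the primitive sublattice $\widetilde{\Sigma}^\perp$; concretely $\widetilde{\Sigma}^\perp\cong\oH^2(S,\ZZ)\oplus\langle-2\rangle$, the $\langle-2\rangle$ being generated by $\pi^*(2B_S)=2\widetilde{B}_S+\widetilde{\Sigma}$. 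A direct discriminant computation gives $A_{M_S}\cong\ZZ/2$ and $A_{\widetilde{M}_S}\cong\ZZ/3$.

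Next I would analyse the two projections. Since $\pi^*$ is injective, the relation $g_1\circ\pi^*=\pi^*\circ g_2$ determines $g_2$ from $g_1$, so $p$ is injective and $q$ factors as $q=\rho\circ p^{-1}$, where $\rho$ is restriction to $\widetilde{\Sigma}^\perp$ followed by the identification $\pi^*$. The exceptional divisor $\widetilde{\Sigma}$ is flat in any simultaneous family (Lemma~\ref{lemma:monodromy preserves Sigma}), so every element of $p(\Mon^2(\pi))$ fixes $\widetilde{\Sigma}$; combined with $\Mon^2(\widetilde{M}_S)=\Or^+(\oH^2(\widetilde{M}_S,\ZZ))$ this yields $p(\Mon^2(\pi))\subseteq\{g\in\Or^+(\oH^2(\widetilde{M}_S,\ZZ))\mid g(\widetilde{\Sigma})=\widetilde{\Sigma}\}$. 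Here I would invoke the theory of locally trivial deformations of Bakker and Lehn \cite{BakkerLehn:Singular2016}: every locally trivial family of $M_S$ admits a simultaneous resolution, so $q$ is surjective, and, conversely, locally trivial deformations of $M_S$ correspond to deformations of $\widetilde{M}_S$ keeping $\widetilde{\Sigma}$ of type $(1,1)$. This last correspondence is what I expect to be the main obstacle, since it is responsible for the reverse inclusion: one must show that the \emph{entire} stabiliser of $\widetilde{\Sigma}$ inside $\Or^+(\oH^2(\widetilde{M}_S,\ZZ))$ is realised by families that descend to locally trivial families of $M_S$, rather than merely a finite-index subgroup. Granting this, $p(\Mon^2(\pi))=\{g\in\Or^+(\oH^2(\widetilde{M}_S,\ZZ))\mid g(\widetilde{\Sigma})=\widetilde{\Sigma}\}$.

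The final step is purely lattice-theoretic. I would show that the restriction map $\rho\colon\{g\in\Or^+(\oH^2(\widetilde{M}_S,\ZZ))\mid g(\widetilde{\Sigma})=\widetilde{\Sigma}\}\to\Or(\widetilde{\Sigma}^\perp)$ is an isomorphism onto $\Ort^+(\oH^2(M_S,\ZZ))$. Surjectivity follows because any $h\in\Or^+(\widetilde{\Sigma}^\perp)$ extends to an integral isometry of $\oH^2(\widetilde{M}_S,\ZZ)$ by acting as the identity on $\widetilde{\Sigma}$: since $\Or(A_{\widetilde{\Sigma}^\perp})=\Or(\ZZ/2)$ is trivial, $h$ automatically acts trivially on the discriminant, hence is compatible with the gluing datum defining the overlattice, and the extension is orientation preserving because the positive three-space lies in $\widetilde{\Sigma}^\perp\otimes\RR$; injectivity is clear, as $\widetilde{\Sigma}$ is fixed and $\widetilde{\Sigma}^\perp$ has corank one. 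Finally, because $A_{M_S}\cong\ZZ/2$ and $\Or(\ZZ/2)$ is trivial, one has $\Ort^+(\oH^2(M_S,\ZZ))=\Or^+(\oH^2(M_S,\ZZ))$, so the image of $\rho$ is exactly $\Ort^+(\oH^2(M_S,\ZZ))$. Combining the three steps gives $\Mon^2(M_S)_{\operatorname{lt}}=q(\Mon^2(\pi))=\rho\bigl(p(\Mon^2(\pi))\bigr)=\Ort^+(\oH^2(M_S,\ZZ))$, as required.
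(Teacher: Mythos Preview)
Your approach is essentially the paper's: both proofs transfer the result from $\widetilde{M}_S$ to $M_S$ via the two projections $p$ and $q$ of $\Mon^2(\pi)$, both invoke Bakker--Lehn, and both finish with a lattice computation (the paper cites \cite[Proposition~1.5.1]{Nikulin:Lattice} where you argue directly, but since $A_{M_S}\cong\ZZ/2$ the two amount to the same thing).

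The one place where your write-up is weaker is precisely the step you flag as ``the main obstacle''. The paper handles it cleanly by citing \cite[Corollary~5.18(1)]{BakkerLehn:Singular2016}: $p(\Mon^2(\pi))$ is \emph{exactly} the stabiliser in $\Mon^2(\widetilde{M}_S)$ of the resolution K\"ahler chamber, i.e.\ the chamber of $\cC_{\widetilde{M}_S}\setminus\widetilde{\Sigma}^\perp$ containing the K\"ahler cone. Since $\widetilde{\Sigma}^\perp$ is the only wall, this stabiliser is the full stabiliser of $\widetilde{\Sigma}$ inside $\Or^+(\oH^2(\widetilde{M}_S,\ZZ))$. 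This replaces your ``Granting this'' with an equality, so nothing further needs to be argued. Similarly, surjectivity of $q$ onto $\Mon^2(M_S)_{\operatorname{lt}}$ is \cite[Corollary~5.18(2)]{BakkerLehn:Singular2016}.

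A minor point: your citation of Lemma~\ref{lemma:monodromy preserves Sigma} for flatness of $\widetilde{\Sigma}$ is not quite right, since that lemma is stated only for families induced by deformations of the underlying $K3$ surface, not for arbitrary simultaneous resolutions. The statement you want (that $\widetilde{\Sigma}$ is the relative exceptional divisor in any simultaneous resolution and hence flat) is true and proved by the same reasoning, but in any case it is subsumed by the Bakker--Lehn citation above.
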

\begin{proof}
First of all, the projection $p\colon\Mon^2(\pi)\to\Or(\oH^2(\widetilde{M}_S,\ZZ))$ is injective and moreover, by the item $(1)$ of \cite[Corollary~5.18]{BakkerLehn:Singular2016}, $\Mon^2(\pi)$ is identified with the subgroup of $\Mon^2(\widetilde{M}_S)$ stabilising the resolution K\"ahler chamber (see \cite[Definition~5.4]{BakkerLehn:Singular2016}). In this case the resolution K\"ahler chamber is the chamber containing the K\"ahler cone in the decomposition $$\cC_{\widetilde{M}_S}\setminus\widetilde{\Sigma}^\perp.$$ Here $\cC_{\widetilde{M}_S}$ is the positive cone, i.e.\ the connected component containing the K\"ahler cone in the cone of positive classes in $\oH^{1,1}(\widetilde{M}_S,\ZZ)$.

Since $\Mon^2(\widetilde{M}_S)=\Or^+(\oH^2(\widetilde{M}_S,\ZZ))$ by Theorem~\ref{thm:Mon^2}, it follows that 
$$ \Mon^2(\pi)=\Or^+(\oH^2(\widetilde{M}_S,\ZZ),\widetilde{\Sigma}),$$
namely the subgroup of isometries $g$ such that $g(\widetilde{\Sigma})=\widetilde{\Sigma}$.

Finally, by item $(2)$ of \cite[Corollary~5.18]{BakkerLehn:Singular2016}, the locally trivial monodromy group $\Mon^2(M_S)_{\operatorname{lt}}$ is identified with the image of the projection $q\colon\Mon^2(\pi)\longrightarrow\Or(\oH^2(M_S,\ZZ)).$

By \cite[Proposition~1.5.1]{Nikulin:Lattice}, the image of $q$ is identified with the subgroup of isometries $h\in\Or(\oH^2(M_S,\ZZ))$ such that $h$ acts as the identity on the finite group $\oH^2(\widetilde{M}_S,\ZZ)/(\pi^*\oH^2(M_S,\ZZ)\oplus\ZZ\widetilde{\Sigma})$. Since the last group is isomorphic to the discriminant group of $\oH^2(M_S,\ZZ)$, the theorem is proved.
\end{proof}

\begin{rmk}
Geometrically Theorem~\ref{thm:l t monodromy} reflects the fact that there are two singular moduli spaces that are birational, but whose singular locus is not preserved under the birational isomorphism (cf.\ Section~\ref{subsection:hyperelliptic}). More precisely, the birational isomorphism does not preserve the singularity type of the two moduli spaces: one has locally factorial singularities, while the other has $2$-factorial singularities.
\end{rmk}

\bibliographystyle{alpha}
\DeclareRobustCommand{\VAN}[3]{#3}
\bibliography{Bibliography}

\end{document}